\documentclass[11pt]{amsart}

%

\usepackage{amsmath,amssymb}
\usepackage{graphicx}
\usepackage{fullpage}
\usepackage{enumerate}
\usepackage{enumitem}
\usepackage{pgf,tikz}
\usetikzlibrary{patterns}
\usetikzlibrary{arrows}
\usetikzlibrary{positioning}
\usepackage{thmtools,thm-restate}

\def\COMMENT#1{}
\let\COMMENT=\footnote
\newtheorem{theorem}{Theorem}[section]

\newtheorem{lemma}[theorem]{Lemma}
\newtheorem{problem}[theorem]{Problem}
\newtheorem{claim}[theorem]{Claim}
\newtheorem{fact}[theorem]{Fact}
\newtheorem{question}[theorem]{Question}

\newtheorem{proposition}[theorem]{Proposition}

\newtheorem{exex}{Extremal Example}
\def\eps{\varepsilon}

\theoremstyle{definition}
\newtheorem{define}[theorem]{Definition}

\DeclareUnicodeCharacter{2212}{-}
\newcommand\blfootnote[1]{%
  \begingroup
  \renewcommand\thefootnote{}\footnote{#1}%
  \addtocounter{footnote}{-1}%
  \endgroup
}

\title{Ramsey-type problems for tilings in dense graphs}

\author{J\'ozsef Balogh, Andrea Freschi and Andrew Treglown}

\thanks{JB:
    Department of Mathematics, University of Illinois at Urbana--Champaign, Urbana, IL, USA. Research supported in part by NSF grants  RTG DMS-1937241, FRG DMS-2152488, the Arnold O.~Beckman Research Award (UIUC Campus Research Board RB 24012),  the Simons Fellowship,  and the BRIDGE Seed Fund (University of Birmingham and University of Illinois at Urbana--Champaign). E-mail:  \texttt{jobal@illinois.edu}.}
\thanks{AF: HUN-REN, Alfr\' ed R\'enyi Institute of Mathematics, Budapest, Hungary. Research partially supported by ERC Advanced Grants ``GeoScape'', no. 882971 and ``ERMiD'', no. 101054936, and the BRIDGE Seed Fund (University of Birmingham and University of Illinois at Urbana--Champaign).
E-mail: {\tt freschi.andrea@renyi.hu}.}
\thanks{AT: School of Mathematics, University of Birmingham, United Kingdom. 
Research supported by EPSRC grant  EP/V048287/1, and the BRIDGE Seed Fund (University of Birmingham and University of Illinois at Urbana--Champaign).
Email: \texttt{a.c.treglown@bham.ac.uk}. 
}

\begin{document}

\begin{abstract}
Given a graph $H$, the Ramsey number $R(H)$ is the smallest positive integer $n$ such that every $2$-edge-colouring of $K_n$ yields a monochromatic copy of $H$.
We write $mH$ to denote the union of $m$ vertex-disjoint copies of $H$. 
The members of the family $\{mH:m\ge1\}$ are also known as $H$-tilings.
A well-known result of Burr, Erd\H os and Spencer states that $R(mK_3)=5m$ for every $m\ge2$.
\linebreak
On the other hand, Moon proved that every $2$-edge-colouring of $K_{3m+2}$ yields a $K_3$-tiling consisting of $m$ monochromatic copies of $K_3$, for every $m\ge2$. Crucially, in Moon's result, distinct copies of $K_3$ might receive different colours.

In this paper, we investigate the analogous questions where the complete host graph is replaced by a graph of large minimum degree.
We determine the (asymptotic) minimum degree threshold for forcing a~\hbox{$K_3$-tiling} covering a prescribed proportion of the vertices in a~\hbox{$2$-edge-coloured} graph such that every copy of~$K_3$ in the tiling is monochromatic.
We also determine the largest size of a monochromatic $K_3$-tiling one can guarantee in any $2$-edge-coloured graph of large minimum degree. 
These results therefore provide generalisations of the theorems of Moon and Burr--Erd\H os--Spencer  to the
setting of dense graphs.

It is also natural to consider generalisations of these problems to $r$-edge-colourings (for $r \geq 2$) and for $H$-tilings (for arbitrary graphs $H$).
We prove some results in this direction and propose several open questions.
\end{abstract}

\maketitle

\blfootnote{The main results of this paper were first announced in the conference abstract~\cite{conf}.}

\section{Introduction}

Ramsey theory is a central research topic in combinatorics.
Ramsey's original theorem~\cite{Ramsey} asserts that for every~$r\in\mathbb N$ and every graph~$H$, there exists an~$n\in\mathbb N$ such that every \hbox{$r$-edge-colouring} of the complete graph~$K_n$  yields a monochromatic copy of~$H$.
We write $R_r(H)$ to denote the smallest~$n$ for which the above holds, and set $R(H):=R_2(H)$.

In general, determining $R(H)$ is a very difficult problem and there are relatively few graphs $H$ for which the exact value of $R(H)$ is known.
So-called tilings provide 
an interesting class of graphs whose Ramsey behaviour is quite well-understood.
For a fixed graph $H$, an {\it $H$-tiling} is a collection of \hbox{vertex-disjoint} copies of $H$.
For $m\in\mathbb N$, we write $mH$ to denote an $H$-tiling consisting of $m$ copies of $H$.
Erd\H os~\cite[Problem 9]{erdosproblem} raised the question of determining $R(mK_{\ell})$ for $\ell \geq 3$.
The following  result of Burr, Erd\H os and Spencer~\cite{BurrES} resolves this question for $\ell =3$.

\begin{theorem}[Burr, Erd\H os and Spencer~\cite{BurrES}]\label{theorem:BES}
For every integer $m\ge2$, we have $R(mK_3)=5m$.    
\end{theorem}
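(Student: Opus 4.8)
The plan is to prove the two matching bounds $R(mK_3)\le 5m$ and $R(mK_3)>5m-1$ separately, the upper bound (that every $2$-colouring of $K_{5m}$ contains a monochromatic $mK_3$) being the substantial direction. The restriction $m\ge 2$ should be expected to enter, since $R(K_3)=6>5$ shows the formula fails at $m=1$.

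For the upper bound I would argue by contradiction, fixing a $2$-colouring of $K_{5m}$ with no monochromatic $mK_3$. The starting point is to take a family $\mathcal{M}$ of pairwise vertex-disjoint monochromatic triangles that is as large as possible, with, say, $r$ red and $b$ blue triangles. By maximality no monochromatic triangle survives on the set $U$ of uncovered vertices, so $|U|\le 5$ because $R(K_3)=6$, and hence $3(r+b)\ge 5m-5$. On the other hand the $r$ red triangles of $\mathcal{M}$ form a monochromatic tiling, so our assumption forces $r\le m-1$, and likewise $b\le m-1$. The point of the constant $5$ is precisely that these bounds leave almost no slack: we obtain $r+b\ge \lceil (5m-5)/3\rceil$, so both colours are already close to their maximal allowance. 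The heart of the argument is then an \emph{exchange step}: using the at most five uncovered vertices together with one red and one blue triangle of $\mathcal{M}$, one reconfigures a bounded, constant-size piece of the colouring so as to replace a blue triangle by two red ones (or symmetrically), thereby increasing the larger colour class without lowering the total. Iterating drives one colour up to $m$ disjoint monochromatic triangles, the desired contradiction.

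I expect this exchange step to be the main obstacle. The difficulty is that five uncovered vertices carrying no monochromatic triangle are forced into the (essentially unique) pentagon colouring of $K_5$, which is rigid; the reconfiguration must therefore combine these vertices with the neighbouring triangles of $\mathcal{M}$ and keep careful track of how their colours interact, and making the resulting case analysis simultaneously exhaustive and of bounded size is where the real work lies. I would first settle the base case $m=2$ (equivalently $R(2K_3)=10$) by hand in order to isolate the relevant exchange moves, and then check that one such move is always available whenever a colour still has fewer than $m$ triangles. One could alternatively organise this as an induction on $m$, peeling off a single monochromatic triangle and applying the inductive hypothesis to the remaining $K_{5(m-1)}$; but then the colour of the peeled triangle must be matched with the colour produced by induction, which reintroduces exactly the same exchange difficulty.

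For the lower bound I would exhibit an explicit $2$-colouring of $K_{5m-1}$ containing no monochromatic $mK_3$. The governing requirement is to make the triangle matching number at most $m-1$ in \emph{each} colour simultaneously, and the natural source of such a colouring is an amplification of the pentagon colouring of $K_5$ that already witnesses $R(K_3)>5$, arranged so that the monochromatic triangles of either colour can be met by few vertices. The tension to resolve is that thinning one colour thickens the other, so the construction has to balance the two classes against the budget of $5m-1$ vertices; once the colouring is written down, verifying that neither colour contains $m$ pairwise disjoint triangles is a finite and elementary check.
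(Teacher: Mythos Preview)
First, note that the paper does not itself prove Theorem~\ref{theorem:BES}; it is quoted from~\cite{BurrES} and used as a black box (indeed, it is invoked directly in the final paragraph of the proof of part~\ref{case:BESlarge} of Theorem~\ref{theorem:alaBES}). What the paper does reveal, however, is the central device of the Burr--Erd\H{o}s--Spencer argument: the \emph{bowtie} (Definition~\ref{define:bowtie}), two monochromatic triangles of different colours sharing exactly one vertex, together with the observation recorded immediately after that definition that any complete graph containing two vertex-disjoint monochromatic triangles of different colours must contain a bowtie. Your proposal is missing this ingredient, and that is the genuine gap in the upper bound.

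Your maximal-packing framework is a plausible starting point, and you are right that the whole proof lives in the ``exchange step'' you leave open. But the original argument does not proceed by swapping a blue triangle for two red ones. Instead one greedily peels off vertex-disjoint bowties: each occupies five vertices and contributes one triangle of \emph{each} colour. Once no bowtie survives on the remaining vertices, the observation above forces all monochromatic triangles there to be of a single colour, and a short count finishes. Without the bowtie concept you have no mechanism to reconcile the two colours, and the ``bounded case analysis'' you hope for has no visible termination; the difficulty you flag is not merely technical but conceptual.

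Your lower-bound intuition is also off. The extremal colouring is \emph{not} a pentagon amplification: blowing up the badly coloured $K_5$ produces no monochromatic triangle whatsoever, which is far too strong for what is needed and wastes vertices. The correct colouring of $K_{5m-1}$ partitions the vertices as $R\,\dot\cup\,B\,\dot\cup\,\{s\}$ with $|R|=3m-1$ and $|B|=2m-1$; edges inside $R$ are red, edges inside $B$ and between $R$ and $B$ are blue, edges from $s$ to $R$ are blue, and edges from $s$ to $B$ are red. One checks that no monochromatic triangle contains $s$, every red triangle lies in $R$ (at most $m-1$ disjoint), and every blue triangle has at least two vertices in $B$ (again at most $m-1$ disjoint). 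This is precisely the $\delta=n-1$ specialisation of the paper's Extremal Example~\ref{exex:BES1}.
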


More generally, Burr, Erd\H os and Spencer~\cite{BurrES} proved that, for a fixed graph~$H$ without isolated vertices, there exist constants $c$ and $m_0$ such that $R(mH)=(2|H|-\alpha(H))m+c$ provided $m\ge m_0$, where~$\alpha(H)$ is the independence number of~$H$.
Burr~\cite{Burr}, and subsequently Buci\'c and Sudakov~\cite{BucicS}, provided methods for computing~$c$ exactly.
Buci\'c and Sudakov~\cite{BucicS} also obtained the current best bounds for~$m_0$. In the case of $K_\ell$-tilings, their work states that there is a constant $C>0$ such 
 that $R(mK_\ell)=(2\ell-1)m+R(\ell-1)-2$ provided $m\ge 2^{C\ell}$. Moreover, the bound on $m$ is essentially tight; see~\cite{BucicS}.

Although not a Ramsey-type question in the classical sense, it is also natural to ask how large a complete $r$-edge-coloured graph needs to be to ensure there exists an $H$-tiling of a given size such that every copy of $H$ is monochromatic.
Crucially, in this setting, different copies of $H$ in the tiling are allowed to receive different colours.
This problem was  studied prior to the work of Burr, Erd\H os and Spencer~\cite{BurrES}. Indeed,  
the following result of Moon~\cite{Moon}  settles the $H=K_3$ case of this problem.

\begin{theorem}[Moon~\cite{Moon}]\label{theorem:Moon}
For every integer $m\ge2$, every $2$-edge-colouring of $K_{3m+2}$ yields a $K_3$-tiling consisting of $m$ monochromatic copies of $K_3$.
Furthermore, the term $3m+2$ cannot be replaced by a smaller integer.
\end{theorem}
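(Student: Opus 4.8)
The plan is to establish the two halves of the statement separately: the sharpness (an explicit colouring of $K_{3m+1}$ admitting no such tiling) and the positive direction (every colouring of $K_{3m+2}$ contains $m$ vertex-disjoint monochromatic triangles). I would handle the positive direction by induction on $m$; the inductive step turns out to be almost immediate, so that the entire difficulty collapses onto the base case $m=2$, namely the assertion that every $2$-edge-colouring of $K_8$ contains two vertex-disjoint monochromatic triangles. I expect this single finite statement to be the main obstacle.

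For sharpness, fix two vertices $a,b$ and let $T$ be the set of the remaining $3m-1$ vertices. Colour every edge between $\{a,b\}$ and $T$ blue, and colour every other edge (that is, $ab$ together with all edges inside $T$) red. The blue graph is the bipartite graph $K_{2,\,3m-1}$ and is therefore triangle-free, so there is no blue triangle; and since the only red neighbour of $a$ is $b$, and vice versa, no red triangle can meet $\{a,b\}$. Hence every monochromatic triangle lies inside $T$, and any family of vertex-disjoint such triangles has size at most $\lfloor (3m-1)/3\rfloor=m-1$. Thus $3m+2$ cannot be lowered to $3m+1$, which is the second part of the theorem.

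For the positive direction, the inductive step (for $m\ge 3$, assuming the case $m-1$) runs as follows. Since $3m+2\ge 11>R(K_3)=6$, any $2$-colouring of $K_{3m+2}$ contains a monochromatic triangle $Q$; deleting its three vertices leaves a $2$-coloured $K_{3m-1}=K_{3(m-1)+2}$, which by the inductive hypothesis contains $m-1$ vertex-disjoint monochromatic triangles. Together with $Q$ these yield the desired $m$ triangles.

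It remains to settle the base case on $K_8$, which is where I expect the genuine work. I would first extract a monochromatic triangle $Q$ (it exists as $8>6$) and examine the $5$ remaining vertices $W$: if $W$ spans a monochromatic triangle we are done, so we may assume $W$ induces the unique triangle-free colouring of $K_5$, in which both colour classes are $5$-cycles. Now each vertex of $Q$ sends $5$ edges to $W$, hence at least $3$ of a common colour; as the corresponding colour class on $W$ is a $C_5$ and $\alpha(C_5)=2$, any three such endpoints contain a monochromatic pentagon edge, producing a monochromatic triangle through that vertex of $Q$ using two vertices of $W$. The crux is then to show that two of the three vertices of $Q$ yield such triangles whose pairs in $W$ are disjoint, which gives two vertex-disjoint monochromatic triangles. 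I expect this last step to require a short but genuine case analysis on which pentagon edges are forced; in particular one must rule out the degenerate configuration in which all three forced edges share a common vertex of $W$, where one instead exploits that $Q$ is itself monochromatic to build a triangle on two vertices of $Q$ and one vertex of $W$. As this is a single finite configuration, it can alternatively be verified directly (or by computer).
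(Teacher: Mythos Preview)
Your proposal is correct in outline and essentially matches what the paper does. The sharpness construction you give is exactly the paper's Extremal Example~\ref{exex:MoonAlt} specialised to $\delta=n-1$ (two isolated-from-each-other vertices joined by blue edges to a red clique), and your inductive reduction of the positive direction to the single case $m=2$ is precisely how the paper derives case~\ref{case:Moonlarge} of Theorem~\ref{theorem:alaMoon}, of which Moon's theorem is the $\delta=n-1$ instance.

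The paper, however, does not prove the base case: Lemma~\ref{lemma:K8} (every $2$-coloured $K_8$ contains two vertex-disjoint monochromatic triangles) is simply attributed to Moon and used as a black box. So in fact you go further than the paper by sketching an argument for it. That sketch is along the right lines---reduce to the badly coloured $K_5$ on the five leftover vertices and exploit that each vertex of the initial monochromatic triangle $Q$ sees a monochromatic $C_5$-edge in $W$---but it is not yet a proof. In particular, your handling of the ``degenerate'' situation (when the forced edges in $W$ cannot be made disjoint) is only gestured at: saying ``build a triangle on two vertices of $Q$ and one of $W$'' requires that two vertices of the red triangle $Q$ share a \emph{red} neighbour in $W$, and then one still needs a monochromatic triangle among the remaining five vertices $\{q_3\}\cup(W\setminus\{w\})$, which is not automatic since removing a vertex from a badly coloured $K_5$ leaves a graph with no monochromatic triangle. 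The case analysis can be completed, but it takes more care than your outline suggests; alternatively, as you note, the finite statement can be checked directly.
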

 
Burr, Erd\H os and Spencer~\cite{BurrES} proved an analogue of Theorem~\ref{theorem:Moon} for larger cliques: any $2$-edge-colouring of $K_{\ell m+R(\ell,\ell-1)-1}$ yields $m$ vertex-disjoint monochromatic copies of $K_\ell$ provided $m$ is sufficiently large with respect to $\ell$; again the $\ell m+R(\ell,\ell-1)-1$ term is best possible.
Here, we write $R(r,b)$ to denote the smallest $n$ such that any red/blue edge-colouring of $K_n$ yields  a red $K_r$ or a blue $K_b$.

We remark that there are many other Ramsey-type results concerning finding many vertex-disjoint monochromatic copies of graphs from a given graph family. Paths and cycles have received particular attention.
Erd\H{o}s and Gy\'arf\'as~\cite{ErdosG} proved that the vertex set of a \hbox{$2$-edge-coloured} complete graph $K_n$ can be partitioned into at most~$2\sqrt{n}$ \hbox{vertex-disjoint} monochromatic paths, all of the same colour, and conjectured the $2\sqrt{n}$ term can be replaced by $\sqrt{n}$.
This was proved recently by Pokrovskiy, Versteegen and Williams~\cite{Pokrovskiy} for sufficiently large $n$.
Gerencs\'er and Gy\'arf\'as~\cite{GerencserG} observed that the vertex set of any \hbox{$2$-edge-coloured} complete graph can be partitioned into two \hbox{vertex-disjoint} monochromatic paths of different colours.
In the 1970s, Lehel conjectured that the vertex set of any \hbox{$2$-edge-coloured} complete graph~$K_n$ can be partitioned into two monochromatic cycles of different colours (see, e.g.,~\cite{Ayel}).
Lehel's conjecture was proved for large~$n$ by {\L}uczak, R\"odl and Szemer\'edi~\cite{LuczakRS}.
The bound on~$n$ was later improved by Allen~\cite{Allen}, and finally Bessy and Thomass\'e~\cite{BessyT} provided a full resolution of Lehel's conjecture.

\medskip

Schelp~\cite{Schelp} (see also~\cite{li}) proposed the study of Ramsey-type questions where the host graph, rather than being complete, can be any graph satisfying a given minimum degree condition.
Several results have been proved in this direction.
For example, Schelp~\cite{Schelp} conjectured that any $2$-edge-coloured $n$-vertex graph $G$ with minimum degree $\delta(G)\ge3n/4$ contains a monochromatic path of length at least $2n/3$.
This conjecture was verified asymptotically by Gy\'arf\'as and S\'ark\"ozy~\cite{GyarfasS}. 
Balogh, Bar\'at, Gerbner, Gy\'arf\'as and S\'ark\"ozy~\cite{BaloghBGGS} conjectured that the conclusion of Lehel's conjecture still holds for any \hbox{$2$-edge-coloured} $n$-vertex graph $G$ with minimum degree~$\delta(G)\ge3n/4$, and proved an asymptotic version of this statement.
A stronger asymptotic result was proved by DeBiasio and Nelsen~\cite{DeBiasioN}
and an exact statement (for~$n$ sufficiently large) was proved by Letzter~\cite{Letzter}.

Motivated by this line of research, in this paper we consider the natural generalisations of the aforementioned classical Ramsey-type results about tilings to the dense setting.
The works of Burr--Erd\H os--Spencer and Moon suggest the following two problems.
In the former, one is interested in finding a large monochromatic $H$-tiling in a graph with given minimum degree. 
The latter problem is the same except we only insist that individual copies of $H$ in the $H$-tiling are monochromatic, but different copies of $H$ may receive different colours.

\begin{problem}\label{problem:alaBES}
Let~$H$ be a fixed graph and~$n,r,\delta\in\mathbb N$.
Determine the largest~$m\in\mathbb N$ such that any \hbox{$r$-edge-coloured} $n$-vertex graph~$G$ with minimum degree~$\delta(G)\ge\delta$ contains a monochromatic copy of~$mH$.
\end{problem}

\begin{problem}\label{problem:alaMoon}
Let~$H$ be a fixed graph and~$n,r,\delta\in\mathbb N$.
Determine the largest~$m\in\mathbb N$ such that any \hbox{$r$-edge-coloured} $n$-vertex graph~$G$ with minimum degree~$\delta(G)\ge\delta$ contains an \hbox{$H$-tiling} consisting of~$m$ monochromatic copies of~$H$.
\end{problem}

Note that Theorems~\ref{theorem:BES} and~\ref{theorem:Moon} provide a full resolution of the case $H=K_3$, $r=2$ and $\delta=n-1$ of Problems~\ref{problem:alaBES} and~\ref{problem:alaMoon} respectively. Indeed, for this case of Problem~\ref{problem:alaBES} and when $n \geq 6$,
we have that $m$ is the integer such that $n=5m+k$ for some $0\leq k \leq 4$. Further,  for this case of Problem~\ref{problem:alaMoon}
and when $n \geq 6$, we have that  $m$ is the integer such that $n=3m+k$ for some $2\leq k \leq 4$.

The  $r=1$ case of both Problems~\ref{problem:alaBES} and~\ref{problem:alaMoon} is equivalent to determining the largest $H$-tiling one can guarantee in any $n$-vertex graph $G$ with $\delta(G)\ge\delta$.
By itself, this case of the problem has received considerable attention, and has motivated a fruitful line of research. 
An \hbox{$H$-tiling} in a graph~$G$ is {\it perfect} if it contains all the vertices of~$G$.
Corr\'adi and Hajnal~\cite{CorradiH} determined the minimum degree threshold that guarantees the existence of a perfect \hbox{$K_3$-tiling}.
This result was further generalised to perfect \hbox{$K_t$-tilings} (for every~$t\in\mathbb N$) by Hajnal and Szemer\'edi~\cite{HajnalS} and to perfect \hbox{$H$-tilings} (for every fixed graph~$H$) by K\"uhn and Osthus~\cite{KuhnO}.
Combining the Hajnal--Szemer\'edi theorem with an elementary interpolation argument, one can easily determine the minimum degree threshold to force a $K_t$-tiling covering a fixed proportion of the vertices (see, e.g., Theorem~\ref{thm:HajnalSzm2} in this paper). 
The same elementary strategy fails for \hbox{$H$-tilings} where~$H$ is an arbitrary fixed graph~$H$.
Koml\'os~\cite{Komlos} determined (asymptotically) the minimum degree threshold that guarantees the existence of an \hbox{$H$-tiling} covering a fixed proportion of the vertices of the host graph, for any fixed graph~$H$, provided the fixed proportion is less than~$1$.
Therefore, the $r=1$ case of Problems~\ref{problem:alaBES} and~\ref{problem:alaMoon} is (asymptotically) fully understood.

The $H=K_2$ case of both Problems~\ref{problem:alaBES} and~\ref{problem:alaMoon} has also been resolved.
Indeed, the case $H=K_2$ of Problem~\ref{problem:alaMoon} is equivalent to determining the largest $K_2$-tiling in a graph with given minimum degree, and thus it is covered by, for example, the Hajnal--Szemer\'edi theorem.
The case $H=K_2$ of Problem~\ref{problem:alaBES} has a more interesting history. Given graphs $H_1,\dots, H_r$,
we write $R_r(H_1,\dots,H_r)$ to denote the smallest integer $n$ such that any $r$-edge-colouring of $K_n$ using colours $c_1,\dots,c_r$ yields a monochromatic copy of $H_i$ in colour $c_i$, for some $i$.
Cockayne and Lorimer~\cite{CockayneL} proved that $R_r(mK_2)=(r+1)(m-1)+2$, resolving the case $H=K_2$, $\delta=n-1$ of Problem~\ref{problem:alaBES}.
Gy\'arf\'as and S\'ark\"ozy~\cite{GyarfasS} determined
$R(mK_2,mK_2,S_t)$ for all $t,m\in\mathbb N$, where $S_t$ is the star on $t+1$ vertices.
The connection of this purely Ramsey-type result to Problem~\ref{problem:alaBES} is that a red/blue/green edge-coloured $K_n$ which does not contain a green monochromatic copy of $S_t$ can be seen as a  red/blue edge-coloured $n$-vertex graph $G$ with $\delta(G)\ge n-t$.
Therefore, Gy\'arf\'as and S\'ark\"ozy's result resolves the case $H=K_2$, $r=2$ of Problem~\ref{problem:alaBES}.
Finally, Omidi, Raeisi and Rahimi~\cite{OmidiRR} computed $R_r(mK_2,\dots,mK_2,S_t)$ for all $r,t,m\in\mathbb N$, thus resolving the case $H=K_2$ of Problem~\ref{problem:alaBES} in full.

\subsection{Main results}
In this paper, our main focus is to study the case $H=K_3$, $r=2$ of Problems~\ref{problem:alaBES} and~\ref{problem:alaMoon}.
Observe that the case $\delta \leq 4n/5$ is uninteresting, as one cannot guarantee a single monochromatic copy of $K_3$.
Indeed, consider a balanced complete $n$-vertex $5$-partite graph~$G$ with classes $V_1,\dots,V_5$.
Clearly $\delta(G)=\lfloor 4n/5\rfloor$.
Colour all edges between $V_i$ and $V_{i+1}$ red, where the indices are taken modulo $5$.
All remaining edges are blue.
Thus,  $G$ does not contain a monochromatic copy of $K_3$.

For the case $H=K_3$, $r=2$ of Problem~\ref{problem:alaBES}, the following theorem provides an exact answer when $\delta$ is a bit larger than $4n/5$ or a bit smaller than $n-1$.

\begin{theorem}\label{theorem:alaBES}
Let $n\in\mathbb N$ and
 $G$ be a $2$-edge-coloured $n$-vertex graph.
Then $G$ contains a monochromatic copy of $mK_3$ where $m$ is equal to 

\begin{enumerate}[label=(B.\arabic*)]

    \vspace{0.2cm}

    \item\label{case:BESlarge} \makebox[3cm]{$\left\lfloor\frac{\delta(G)+1}{5}\right\rfloor$} if \quad $\frac{65n}{66} \leq \delta(G)$,

    \vspace{0.5cm}

    \item\label{case:BESsmall}  \makebox[3cm]{$\left\lceil\frac{5\delta(G)-4n}{2}\right\rceil$} if \quad $\frac{4n}{5}\le\delta(G)\le\frac{5n}{6}.$
\end{enumerate}
\vspace{0.1cm}
Furthermore, parts~\ref{case:BESlarge} and~\ref{case:BESsmall} are best possible, in the sense that the statement of the theorem does not hold if $m$ is replaced by a larger number.
\end{theorem}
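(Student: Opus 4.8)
The plan is to treat the two regimes separately and, within each, to prove a matching pair of bounds: an existence statement that every such $G$ contains a monochromatic $mK_3$ (recall this means all $m$ triangles receive a common colour), and an extremal example showing $m$ cannot be increased. Throughout I write $N[v]$ for the closed neighbourhood of $v$ and use that in a graph with $\delta(G)=\delta$ every vertex has at most $n-1-\delta$ non-neighbours.

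For the existence half of~\ref{case:BESlarge}, I would fix a vertex $v_0$ of minimum degree and work inside the induced colouring on $H:=G[N[v_0]]$, which has exactly $\delta+1$ vertices. Since each vertex of $H$ has at most $n-1-\delta$ non-neighbours in $G$, the minimum degree of $H$ is at least $(\delta+1)-1-(n-1-\delta)=2\delta+1-n$, which for $\delta\ge 65n/66$ is roughly a $\tfrac{64}{65}$-fraction of $|V(H)|$; the constant $65/66$ is tuned precisely so that $H$ satisfies the hypothesis of a dense, minimum-degree version of Theorem~\ref{theorem:BES}. That robust statement — any $2$-edge-coloured graph on $N$ vertices whose minimum degree is sufficiently close to $N$ contains a monochromatic $\lfloor N/5\rfloor K_3$ — applied with $N=\delta+1$ then produces the desired monochromatic $\lfloor(\delta+1)/5\rfloor K_3$ inside $H$, and hence in $G$. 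I expect proving this dense analogue of Burr--Erd\H os--Spencer, with the sharp count $\lfloor N/5\rfloor$, to be the main obstacle: Theorem~\ref{theorem:BES} cannot be quoted directly because $H$ is not complete, and a plain regularity argument would forfeit the additive term required for an exact answer. The natural route is to revisit the original Burr--Erd\H os--Spencer packing scheme and make it tolerate a bounded number of non-edges at each vertex, building the triangles greedily while setting aside a small buffer of vertices to compensate for the missing edges.

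For~\ref{case:BESsmall}, where $4n/5\le\delta\le 5n/6$, the host graph is only moderately dense and the neighbourhood trick no longer suffices (inside $N[v_0]$ the relative minimum degree drops well below the robust threshold), so I would instead apply Szemer\'edi's regularity lemma to the red and blue subgraphs simultaneously and pass to a reduced multigraph $R$ carrying a dominant colour on each pair. The minimum-degree hypothesis on $G$ transfers to $R$, and the extremal example below indicates that the structure to aim for is a near-balanced blow-up of $C_5$ together with a small surplus accounting for the excess of $\delta$ over $4n/5$. I would first establish the asymptotic bound by locating in $R$ a family of monochromatic triangles and regular pairs whose blow-ups give about $(5\delta-4n)/2$ vertex-disjoint monochromatic copies of $K_3$ in a single colour, the rate $5/2$ and the ceiling arising from optimising how surplus degree converts into triangles. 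To sharpen the asymptotic bound to the exact value $\lceil(5\delta-4n)/2\rceil$ I would run a stability argument: if $G$ is far from the $C_5$-blow-up there is slack to win outright, while if $G$ is structurally close to it a direct analysis of the near-extremal configuration, avoiding regularity, pins down the exact count.

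Finally, for the ``best possible'' assertions I would exhibit explicit $2$-edge-colourings. For~\ref{case:BESsmall} the construction begins with the balanced complete $5$-partite graph under the pentagon/pentagram colouring — minimum degree $\lfloor 4n/5\rfloor$ and no monochromatic $K_3$ — and then spends the surplus $\delta-4n/5$ on a small gadget, built from a Burr--Erd\H os--Spencer extremal colouring, that admits exactly $\lceil(5\delta-4n)/2\rceil$ vertex-disjoint monochromatic triangles in each colour but no more; checking that neither colour class contains a larger triangle packing, and that the exchange rate is exactly $5/2$, is the delicate point here. For~\ref{case:BESlarge} the construction is a near-complete graph in which the closed neighbourhood of a minimum-degree vertex carries a Burr--Erd\H os--Spencer extremal colouring on $\delta+1\le 5(m+1)-1$ vertices, suitably completed to an $n$-vertex graph with $\delta(G)=\delta$ so that no monochromatic $(m+1)K_3$ is created; this is exactly the extremal configuration for the robust Burr--Erd\H os--Spencer statement invoked above.
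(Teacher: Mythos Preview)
Your plan for~\ref{case:BESlarge} has a genuine gap. The ``robust Burr--Erd\H os--Spencer'' statement you want to apply to $H=G[N[v_0]]$ --- that an $N$-vertex $2$-coloured graph with minimum degree at least $cN$ contains a monochromatic $\lfloor N/5\rfloor K_3$ --- is \emph{false} for every fixed $c<1$: the paper's own Extremal Example~\ref{exex:BES1} on $N$ vertices has minimum degree $\delta$ but no monochromatic $mK_3$ with $m>\lfloor(\delta+1)/5\rfloor$, and this is strictly below $\lfloor N/5\rfloor$ whenever, say, $5\mid N$ and $\delta\le N-2$. So the sharp count one can hope for in $H$ is only $\lfloor(\delta(H)+1)/5\rfloor$, which is precisely statement~\ref{case:BESlarge} again; moreover $\delta(H)/|H|\approx 64/65<65/66$, so passing to $H$ actually \emph{weakens} the hypothesis you have available. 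The closed-neighbourhood step is therefore circular and buys nothing, and ``revisit BES tolerating a few non-edges'' is the whole theorem, not a subroutine. The paper's argument is completely different: it works directly in $G$, building a maximal packing $\mathcal B$ of disjoint $K_5$'s each spanning a \emph{bowtie} (two monochromatic triangles of opposite colours sharing one vertex), shows $|\mathcal B|\ge 5n/33$ from $\delta\ge 65n/66$, and then uses two local swap lemmas (on $2$-coloured $K_6$ and $K_7$) to absorb leftover vertices. This bowtie mechanism is where all the work lies and has no counterpart in your proposal.

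For~\ref{case:BESsmall} your regularity-plus-stability plan is not wrong in spirit, but it is a substantial detour: the paper's proof is two lines. Hajnal--Szemer\'edi yields $5\delta(G)-4n$ vertex-disjoint copies of $K_6$ in $G$; each contains a monochromatic triangle, giving $5\delta(G)-4n$ vertex-disjoint monochromatic copies of $K_3$ (this is exactly case~\ref{case:Moonsmall} of Theorem~\ref{theorem:alaMoon}); by pigeonhole on the two colours at least $\lceil(5\delta(G)-4n)/2\rceil$ of them share a colour. No regularity lemma, no stability analysis, no error term to remove.
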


 Case~\ref{case:BESlarge} of Theorem~\ref{theorem:alaBES} can be seen as a dense generalisation of the Burr--Erd\H os--Spencer result, as
 Theorem~\ref{theorem:BES} corresponds precisely to the case $n=5m$, $\delta(G)=n-1$ of Theorem~\ref{theorem:alaBES}.

Theorem~\ref{theorem:alaBES} does not cover graphs with minimum degree between $5n/6$ and $65n/66$, however, 
 we raise the following question.

\begin{question}\label{conjecture:alaBES}
Is the following true? Let $n\geq 25$ 
be an integer and  $G$ be a $2$-edge-coloured $n$-vertex graph.
Then $G$ contains a monochromatic copy of $mK_3$, where $m$ is equal to 
\begin{enumerate}[label=(C.\arabic*)]

    \vspace{0.2cm}

    \item\label{case:conjBESlarge} \makebox[3cm]{$\left\lfloor\frac{\delta(G)+1}{5}\right\rfloor$} if \quad $ \frac{15n}{17} \le \delta(G) $, 

    \vspace{0.5cm}

    \item\label{case:conjBESmedium}  \makebox[3cm]{$\left\lfloor\frac{4\delta(G)-3n+1}{3}\right\rfloor$} if \quad $\frac{6n}{7}\le\delta(G)\le\frac{15n}{17}$,

    \vspace{0.5cm}

    \item\label{case:conjBESsmall}  \makebox[3cm]{$\left\lceil\frac{5\delta(G)-4n}{2}\right\rceil$} if \quad $\frac{4n}{5}\le\delta(G)\le\frac{6n}{7}.$
\end{enumerate}    
\end{question}
In Section~\ref{sec2.2} we provide extremal examples that show the bounds on $m$ in Question~\ref{conjecture:alaBES} cannot be increased.
Note that  we put the condition $n \geq 25$ in Question~\ref{conjecture:alaBES} 
to ensure that we can separate into three cases and also to ensure that we have matching extremal examples. It may be possible that there is an affirmative answer to the question with a smaller lower bound on $n$.

\smallskip

When $H=K_3$ and $r=2$, Problem~\ref{problem:alaMoon} turns out to be much more tractable.
The following theorem provides an (asymptotic) resolution of this case.

\begin{theorem}\label{theorem:alaMoon}
Let~$n\in\mathbb N$ and  $G$ be a \hbox{$2$-edge-coloured} $n$-vertex graph.
Then there exists a \hbox{$K_3$-tiling} in~$G$ such that every copy of~$K_3$ is monochromatic and the number of copies of~$K_3$ in the tiling is at least

\begin{enumerate}[label=(M.\arabic*)]

    \item\label{case:Moonlarge}   \makebox[3.5cm]{$\left\lfloor\frac{2\delta(G)-n}{3}\right\rfloor$} if \quad $\frac{7n}{8}\leq \delta(G)$, 

    \vspace{0.5cm}
    
    \item\label{case:Moonmedium}   \makebox[3.5cm]{$\left\lfloor\frac{4\delta(G)-3n}{2}\right\rfloor-o(n)$} if \quad $\frac{5n}{6}\le\delta(G)\le\frac{7n}{8}$, 

    \vspace{0.5cm}

    \item\label{case:Moonsmall}  \makebox[3.5cm]{$5\delta(G)-4n$} if \quad $\frac{4n}{5}\le \delta(G)\le \frac{5n}{6}$.

    \vspace{0.5cm}

\end{enumerate} 
\noindent Furthermore, parts~\ref{case:Moonlarge} and~\ref{case:Moonsmall} are best possible and part~\ref{case:Moonmedium} is best possible up to the~$o(n)$ term.
\end{theorem}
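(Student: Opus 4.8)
The plan is to prove the existence direction (that a tiling of the stated size always exists) by analysing a \emph{maximum} monochromatic $K_3$-tiling together with its set of uncovered vertices, and to prove sharpness via explicit constructions. The key auxiliary statement I would isolate first is a minimum-degree Ramsey result for a single triangle: every $2$-edge-coloured graph $H$ on $N$ vertices with $\delta(H)>4N/5$ contains a monochromatic triangle, the balanced blow-up of $C_5$ from the introduction showing that $4N/5$ is sharp. In fact I would want the \emph{stability} version: if $\delta(H)$ is close to $4N/5$ and $H$ contains few monochromatic triangles, then $H$ is structurally close to a blow-up of $C_5$ with the canonical red/blue pattern. This lemma is what converts the global minimum-degree hypothesis on $G$ into usable local information about any candidate uncovered set.

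With this in hand, fix a monochromatic $K_3$-tiling $\mathcal{T}$ of maximum size and let $U$ be the set of uncovered vertices. Since a monochromatic triangle inside $G[U]$ would enlarge $\mathcal{T}$, the graph $G[U]$ is monochromatic-triangle-free, and hence $\delta(G[U])\le 4|U|/5$ by the auxiliary lemma; this forces many neighbours of some uncovered vertex to lie in the covered part. In the high-degree regime~\ref{case:Moonlarge} ($\delta(G)\ge 7n/8$) I would then run a swap (augmentation) argument: an uncovered vertex $u$ that is monochromatically joined to two vertices of some $T\in\mathcal{T}$ lets us exchange $T$ for a triangle through $u$, releasing a vertex of $T$, and chaining two such moves yields a net gain of one triangle unless the uncovered vertices are globally obstructed. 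The aim is to show that the swap keeps improving $\mathcal{T}$ as long as $|U|>2(n-\delta(G))$, so that the covered set can be pushed up to size $2\delta(G)-n$, giving exactly $m\ge\lfloor(2\delta(G)-n)/3\rfloor$.

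For the near-threshold regime~\ref{case:Moonsmall} ($4n/5\le\delta(G)\le 5n/6$) the target is small, and here I would lean directly on the stability form of the lemma. Either $G$ is far from every $C_5$-blow-up, in which case a supersaturation/removal-type argument produces the required $5\delta(G)-4n$ vertex-disjoint monochromatic triangles; or $G$ is close to a $C_5$-blow-up, in which case the triangles must come from the ``defect'' edges and vertices witnessing that $\delta(G)$ exceeds $4n/5$, which I would count to recover the linear rate of five triangles per unit of excess degree. The middle regime~\ref{case:Moonmedium} ($5n/6\le\delta(G)\le 7n/8$) is where I expect to invoke Szemer\'edi's regularity lemma: pass to a regular partition, $2$-colour the reduced graph by majority density, find an (approximately optimal) monochromatic-triangle tiling of the reduced graph, and lift it by tiling each dense monochromatic triple inside $G$ via the Hajnal--Szemer\'edi theorem (in the form of Theorem~\ref{thm:HajnalSzm2}). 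The loss incurred in the regular partition is precisely what produces the unavoidable $o(n)$ term in~\ref{case:Moonmedium}.

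Finally, for the ``best possible'' assertions I would exhibit, in each regime, a $2$-edge-coloured graph meeting the bound: perturbations of the balanced $C_5$-blow-up for~\ref{case:Moonsmall}; a configuration containing a bipartite-type gadget of $2(n-\delta(G))$ vertices that cannot be absorbed into monochromatic triangles for~\ref{case:Moonlarge}; and an interpolating family for~\ref{case:Moonmedium}, with the verification that no larger tiling exists reducing to a finite optimisation over how the gadget meets the rest of the graph. The main obstacle throughout is the augmentation analysis in the exact regimes: controlling the swap moves tightly enough to match the extremal constructions, rather than losing constant factors, is delicate, and in~\ref{case:Moonmedium} the analogous difficulty is executing the reduced-graph tiling so that its density matches the construction up to the $o(n)$ term.
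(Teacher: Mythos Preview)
Your plan diverges substantially from the paper, and the central difference is worth isolating. The paper never analyses a maximum monochromatic tiling or its uncovered set; instead it applies Hajnal--Szemer\'edi (Theorem~\ref{thm:HajnalSzm2}) to tile $G$ by cliques $K_t$ with $t\ge R(K_3)=6$, and then extracts monochromatic triangles \emph{inside each clique}. For~\ref{case:Moonsmall} this is a two-line argument: one obtains $5\delta(G)-4n$ disjoint copies of $K_6$, each containing a monochromatic $K_3$. For~\ref{case:Moonlarge} one tiles by copies of $K_8$ (each containing two disjoint monochromatic $K_3$ by Moon's theorem, Lemma~\ref{lemma:K8}) together with a short induction on $n$. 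Your stability/supersaturation route for~\ref{case:Moonsmall} and your augmentation scheme for~\ref{case:Moonlarge} are far heavier, and in both cases it is unclear how you would extract the \emph{exact} constants rather than asymptotic ones; in particular, your swap step ``$u$ monochromatically joined to two vertices of $T$'' does not by itself produce a monochromatic triangle unless the edge inside $T$ also has the right colour.

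More seriously, your treatment of~\ref{case:Moonmedium} is circular. After passing to a $2$-coloured reduced graph $R$ you propose to ``find an (approximately optimal) monochromatic-triangle tiling of the reduced graph''---but $R$ inherits $\delta(R)/|R|\approx\delta(G)/n\in[5/6,7/8]$, so finding a monochromatic $K_3$-tiling of size $\approx(4\delta(R)-3|R|)/2$ in $R$ is exactly the instance of~\ref{case:Moonmedium} you are trying to prove. The paper avoids this by \emph{ignoring} the colours on $R$: it applies Hajnal--Szemer\'edi to tile $R$ by uncoloured copies of $K_6$, $K_7$, $K_8$, lifts each to a blow-up $K_t(2)$ in $G$, and then invokes a new combinatorial lemma (Lemma~\ref{lemma:K7}: every $2$-edge-coloured $K_7(2)$ contains three disjoint monochromatic triangles) together with Fact~\ref{fact:K6} and Lemma~\ref{lemma:K8} to harvest the correct number of monochromatic $K_3$ from each blow-up. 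The idea you are missing is that the colour information should be exploited only \emph{after} the clique-tiling, at the level of small blown-up cliques, not at the level of the reduced graph.
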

Note that for $n \geq 8$, \ref{case:Moonlarge} deals with the case when the host graph $G$ is complete, and so generalises Theorem~\ref{theorem:Moon}. On the other hand, \ref{case:Moonsmall} for $n=5$ reiterates that there exists a $2$-edge-coloured $K_5$ without a monochromatic $K_3$; the $n=6$ case reiterates that every $2$-edge-coloured $K_6$ contains a monochromatic $K_3$.

\subsection{Organisation of the paper and notation}

In the next section, we present the extremal examples showing the sharpness of Theorems~\ref{theorem:alaBES} and~\ref{theorem:alaMoon}, and the bounds in Question~\ref{conjecture:alaBES}.
The third and fourth sections cover the proofs of Theorems~\ref{theorem:alaMoon} and~\ref{theorem:alaBES}, respectively.
In the final section we discuss some further results and research directions.
We conclude this section with a list of the notation used throughout the paper. 

\smallskip

Given $n \in \mathbb N$, we set 
$[n]:=\{1,2,\dots,n\}$.
Given two sets~$A$ and~$B$, we write~$A\dot\cup B$ to denote the disjoint union of~$A$ and~$B$.
For a graph $G$, we write $|G|$ to denote the number of vertices in $G$.
A set of vertices $S\subseteq V(G)$ is {\it independent} if no edge lies in it.
A subgraph~$H$ of~$G$ is {\it spanning} if~$V(H)=V(G)$.
Given a set~$X\subseteq V(G)$, we write~$G[X]$ for the {\it induced subgraph of~$G$ on~$X$}, that is, the subgraph with vertex set~$X$ which contains all edges of~$G$ lying in~$X$. Set $G\setminus X:=G[V(G)\setminus X]$.

Given a partition~$V_1,\dots,V_k$ of~$V(G)$, we  write~$G[V_1,\dots,V_k]$ to denote the spanning subgraph of~$G$ containing all edges of~$G$ except those lying within a class~$V_i$, for any~$i\ge1$.
Given a graph~$H$ and~$k\in\mathbb N$, we write~$H(k)$ to denote the blow-up of~$H$ where every vertex is replaced by a class of~$k$ vertices.
A \emph{blow-up} of an edge-coloured graph $G$ is an edge-coloured $|G|$-partite graph with vertex classes $\{V_v:v\in V(G)\}$ such that if $xy\in E(G)$ then all edges between $V_x$ and $V_y$ are present and have the same colour as $xy$, whereas if $xy\notin E(G)$ then there is no edge between $V_x$ and $V_y$.

We say that a \hbox{$2$-edge-coloured}~$K_5$ is {\it badly coloured} if the edges of each colour form a cycle of length~$5$; so
 a badly coloured~$K_5$ does not contain a monochromatic copy of~$K_3$.

\section{Extremal examples for Theorems~\ref{theorem:alaBES}, Theorem~\ref{theorem:alaMoon} and Question~\ref{conjecture:alaBES}}

In this section we present extremal examples that show the bounds on the size of the $K_3$-tilings in Theorems~\ref{theorem:alaBES} and~\ref{theorem:alaMoon}, and Question~\ref{conjecture:alaBES} cannot be increased. 

\subsection{Extremal examples for Theorem~\ref{theorem:alaMoon}}

The following construction shows the sharpness of Theorem~\ref{theorem:alaMoon} for all its cases.

\begin{exex}\label{exex:Moon}
For every~$n,\delta\in\mathbb N$ such that~$4n/5\le\delta\le n-1$, we write~${\text{EX}}_{\triangle}(n,\delta)$ to denote the following red/blue \hbox{edge-coloured} graph.
We have~$V({\text{EX}}_{\triangle}(n,\delta))=V_0\dot\cup V_1\dot\cup\cdots\dot\cup V_5$ where~$|V_i|=n-\delta\ge1$ for every~$i\ge1$ and~$|V_0|=5\delta-4n\ge0$.
The sets~$V_1,\dots,V_5$ are independent; all other pairs of vertices form an edge.
The subgraph~${\text{EX}}_{\triangle}(n,\delta)[V_0\cup V_1,V_2,V_3,V_4,V_5]$ is a \hbox{blow-up} of a badly coloured~$K_5$.
The edges lying in $V_0$ and the edges incident to both $V_0$ and $V_1$ are red.
See Figure~\ref{fig:EX1} for a representation of~${\text{EX}}_{\triangle}(n,\delta)$. 
\end{exex}

\begin{figure}[h!]
\centering
\begin{tikzpicture}[scale=0.76]

\filldraw[color=red!50,fill=red!50,very thick] (18:3.3)--(90:3.3)--(162:3.3)--(234:3.3)--(306:3.3)--(18:3.3);

\filldraw[color=red!50,fill=white,very thick] (18:2.7)--(90:2.7)--(162:2.7)--(234:2.7)--(306:2.7)--(18:2.7);

\filldraw[color=blue!50,fill=blue!50,very thick] (18:3.6)--(162:3.6)--(162:2.4)--(18:2.4)--(18:3.6);

\filldraw[color=blue!50,fill=blue!50,very thick] (18:3.6)--(234:3.6)--(234:2.4)--(18:2.4)--(18:3.6);

\filldraw[color=blue!50,fill=blue!50,very thick] (90:3.6)--(234:3.6)--(234:2.4)--(90:2.4)--(90:3.6);

\filldraw[color=blue!50,fill=blue!50,very thick] (90:3.6)--(306:3.6)--(306:2.4)--(90:2.4)--(90:3.6);

\filldraw[color=blue!50,fill=blue!50,very thick] (162:3.6)--(306:3.6)--(306:2.4)--(162:2.4)--(162:3.6);

\filldraw[color=black,fill=white,very thick] (18:3) circle (1cm);

\filldraw[color=black,fill=white,very thick] (90:3) circle (1.6cm);

\filldraw[color=black,fill=white,very thick] (162:3) circle (1cm);

\filldraw[color=black,fill=white,very thick] (234:3) circle (1cm);

\filldraw[color=black,fill=white,very thick] (306:3) circle (1cm);

\filldraw[color=red!50,fill=red!50,very thick] (100:4)--(68:3.3)--(68:2.7)--(100:2.25)--(100:4);

\filldraw[color=black,fill=white,very thick] (100:3) circle (1cm);

\filldraw[color=black,fill=red!50,very thick] (68:3) circle (0.4cm);

\node at (18:3) {$V_5$};
\node at (162:3) {$V_2$};
\node at (234:3) {$V_3$};
\node at (306:3) {$V_4$};

\node at (100:3) {$V_1$};
\node at (68:3) {$V_0$};
    
\end{tikzpicture}
\caption{A representation of the graph ${\text{EX}}_{\triangle}(n,\delta)$.
Note that removing all edges inside $V_0\cup V_1$ yields an unbalanced blow-up of $K_5$ with no monochromatic triangle.}
\label{fig:EX1}
\end{figure}

In the next lemma, we determine an upper bound for the largest $K_3$-tiling in $\text{EX}_{\triangle}(n,\delta)$ consisting of monochromatic copies of $K_3$.

\begin{lemma}\label{lemma:exMoon}
Let~$n,\delta\in\mathbb N$ such that~$4n/5\le\delta\le n-1$.
Then~${\text{EX}}_{\triangle}(n,\delta)$ is an $n$-vertex graph with minimum degree~$\delta({\text{EX}}_{\triangle}(n,\delta))=\delta$.
Furthermore, for any collection~$\mathcal F$ of \hbox{vertex-disjoint} monochromatic copies of~$K_3$ in~${\text{EX}}_{\triangle}(n,\delta)$, we have

$$|\mathcal F|\le\min\left\{5\delta-4n\;,\;\frac{4\delta-3n}{2}\;,\;\frac{2\delta-n}{3}\right\}.$$

\end{lemma}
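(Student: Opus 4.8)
The plan is to first dispose of the minimum-degree computation, which is routine, and then to extract a single structural description of all monochromatic triangles in $\text{EX}_{\triangle}(n,\delta)$ from which all three bounds fall out. For the degree count, a vertex of $V_0$ is joined to every other vertex (it lies in the red clique $V_0$, is red-joined to all of $V_1$, and is joined to each $V_j$ with $j\ge2$ through the blow-up), so it has degree $n-1$; a vertex of any $V_i$ with $i\ge1$ is joined to everything outside $V_i$ and to nothing inside $V_i$, so it has degree $n-|V_i|=n-(n-\delta)=\delta$. Since $\delta\le n-1$, the minimum degree is exactly $\delta$, and $|V_0|+5(n-\delta)=n$ confirms the vertex count.

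The heart of the argument is the observation that the blow-up of a badly coloured $K_5$ contains no monochromatic triangle: each colour class is a $C_5$, hence triangle-free, and a monochromatic triangle in the blow-up would either repeat a part (impossible, as the parts are independent) or project to a monochromatic triangle of the $K_5$. Write $w$ for the part $V_0\cup V_1$ and let $V_a,V_b$ be the two classes among $V_2,\dots,V_5$ that are red-joined to $w$ in the badly coloured $K_5$ (there are exactly two, since $w$ has red-degree $2$). I would then show, by a short case analysis, that any monochromatic triangle $T$ contains at least two vertices of $V_0\cup V_1$: if at most one vertex of $T$ lay in $V_0\cup V_1$, then (as the remaining vertices occupy distinct classes $V_j$, $j\ge2$, these being independent) $T$ would be a transversal triangle of the blow-up, contradicting the previous sentence. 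Hence $T$ uses an edge inside $V_0\cup V_1$; since every such edge lies in $V_0$ or joins $V_0$ to $V_1$, this edge is red (so $T$ is red) and meets $V_0$.

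From these facts the first two bounds follow immediately from vertex-disjointness of $\mathcal F$: since each triangle meets $V_0$, we get $|\mathcal F|\le|V_0|=5\delta-4n$; and since each has at least two vertices in $V_0\cup V_1$, we get $2|\mathcal F|\le|V_0\cup V_1|=4\delta-3n$. For the third bound I would locate the third vertex $z$ of $T$: if $z\in V_0\cup V_1$ there is nothing to do, and otherwise $z\in V_j$ for some $j\ge2$, in which case the two edges joining $z$ to the (at least two) vertices of $T$ lying in the part $w$ both carry the colour of the $K_5$-edge $wu_j$; as $T$ is red, that edge is red, so $j\in\{a,b\}$. Thus every monochromatic triangle is contained in $V_0\cup V_1\cup V_a\cup V_b$, a set of $|V_0|+3(n-\delta)=2\delta-n$ vertices, whence $3|\mathcal F|\le 2\delta-n$. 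Taking the minimum of the three bounds completes the proof.

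I expect the only genuinely delicate point to be the third bound. The first two are soft consequences of ``every monochromatic triangle grabs an edge inside $V_0\cup V_1$'', whereas $\tfrac{2\delta-n}{3}$ relies on the finer fact that a red triangle cannot reach into the two classes blue-joined to $w$, which is precisely where the exact colouring of the badly coloured $K_5$ (that $w$ has only two red neighbours) is used. One should also confirm the case analysis is exhaustive — in particular that no two triangle vertices share a class $V_j$ with $j\ge2$ and that no blue monochromatic triangle exists — but these are straightforward given that $V_1,\dots,V_5$ are independent and all edges inside $V_0\cup V_1$ are red.
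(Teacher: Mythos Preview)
Your proof is correct and follows essentially the same route as the paper's: both arguments observe that the blow-up of a badly coloured $K_5$ is monochromatic-triangle-free, deduce that every monochromatic triangle must use an edge inside $V_0\cup V_1$ (hence is red and meets $V_0$), and then read off the three bounds from the sizes of $V_0$, $V_0\cup V_1$, and the complement of the two blue-joined classes. Your phrasing for the third bound (confining triangles to $V_0\cup V_1\cup V_a\cup V_b$) is just the complementary formulation of the paper's statement that no red triangle meets the two classes blue-joined to $V_0\cup V_1$.
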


\begin{proof}
We have~$|\text{EX}_{\triangle}(n,\delta)|=|V_0|+\ldots+|V_5|=(5\delta-4n)+5(n-\delta)=n$.
Every vertex in~$V_0$ has degree~$n-1$, all other vertices have degree~$\delta$.
In particular,~$\delta(\text{EX}_{\triangle}(n,\delta))=\delta$.
Let~$\mathcal F$ be a collection of \hbox{vertex-disjoint} monochromatic copies of~$K_3$ in~$\text{EX}_{\triangle}(n,\delta)$.

As~$\text{EX}_{\triangle}(n,\delta)[V_0\cup V_1,V_2,V_3,V_4,V_5]$ is a blow-up of a badly coloured~$K_5$, it does not contain a monochromatic copy of~$K_3$.
It follows that every monochromatic copy of~$K_3$ contains an edge lying in~$V_0\cup V_1$, and thus it must be red.
In particular, every monochromatic copy of~$K_3$ (i) has at least one vertex in~$V_0$ (since $V_1$ is independent) and (ii) at least two vertices in~$V_0\cup V_1$.

Recall that the blue edges of a badly coloured~$K_5$ form a cycle of length~$5$ and so each vertex of~$K_5$ is incident to two blue edges.
As~$\text{EX}_{\triangle}(n,\delta)[V_0\cup V_1,V_2,V_3,V_4,V_5]$ is a blow-up of a badly coloured~$K_5$, we may assume without loss of generality that all edges between~$V_0\cup V_1$ and~$V_3\cup V_4$ are blue.
It follows that (iii) no red monochromatic copy of~$K_3$ intersects~$V_3\cup V_4$.
Property (i) implies~$|\mathcal F|\le |V_0|=5\delta-4n$.
Property (ii) implies~$|\mathcal F|\le |V_0\cup V_1|/2=(4\delta-3n)/2$.
Property (iii) implies~$|\mathcal F|\le (n-|V_3\cup V_4|)/3=(2\delta-n)/3$.
\end{proof}

It is easy to check that

\begin{equation}\label{eq:intervals}
\min\left\{5\delta-4n\;,\;\frac{4\delta-3n}{2}\;,\;\frac{2\delta-n}{3}\right\}
=\begin{cases}
(2\delta-n)/3 & \text{ if \quad $7n/8\le\delta$;}\\
(4\delta-3n)/2 & \text{ if \quad $5n/6\le\delta\le 7n/8$;}\\
5\delta-4n & \text{ if \quad $4n/5\le\delta\le 5n/6$.} 
\end{cases}
\end{equation}

Lemma~\ref{lemma:exMoon} and equation~\eqref{eq:intervals} imply parts~\ref{case:Moonlarge} and~\ref{case:Moonsmall} of Theorem~\ref{theorem:alaMoon} are best possible, and part~\ref{case:Moonmedium} is best possible up to the~$o(n)$ term.

The next construction is an alternative extremal example for part~\ref{case:Moonlarge} of Theorem~\ref{theorem:alaMoon}.

\begin{exex}\label{exex:MoonAlt}
For every~$n,\delta\in\mathbb N$ such that~$7n/8\le\delta\le n-1$, we write~${\text{EX}}_{\triangle}^{\text{Alt}}(n,\delta)$ to denote the following red/blue \hbox{edge-coloured} graph.
We have~$V({\text{EX}}_{\triangle}^{\text{Alt}}(n,\delta))=S_1\dot\cup S_2\dot\cup R$ where~$|S_1|=|S_2|=n-\delta\ge1$ and~$|R|=2\delta-n\ge1$.
The sets~$S_1$ and~$S_2$ are independent.
All other pairs of vertices form an edge.
All edges with one vertex in~$S_1\cup S_2$ and the other in~$R$ are blue.
All remaining edges are red.
See Figure~\ref{fig:EX2} for a representation of~${\text{EX}}_{\triangle}^{\text{Alt}}(n,\delta)$.
\end{exex}

\begin{figure}[h!]
\centering
\begin{tikzpicture}[scale=0.76]

\filldraw[color=red!50,fill=red!50,very thick] (0.3,0)--(0.3,3)--(-0.3,3)--(-0.3,0)--(0.3,0);

\filldraw[color=blue!50,fill=blue!50,very thick] (0.5,0)--(4,1.5)--(3,1.5)--(-0.5,0)--(0.5,0);

\filldraw[color=blue!50,fill=blue!50,very thick] (0.5,3)--(4,1.5)--(3,1.5)--(-0.5,3)--(0.5,3);

\filldraw[color=black,fill=white,very thick] (0,0) circle (1cm);

\filldraw[color=black,fill=white,very thick] (0,3) circle (1cm);

\filldraw[color=black,fill=red!50,very thick] (3.5,1.5) circle (2cm);

\node at (0,0) {$S_1$};
\node at (0,3) {$S_2$};
\node at (3.5,1.5) {$R$};
    
\end{tikzpicture}
\caption{A representation of the graph~${\text{EX}}_{\triangle}^{\text{Alt}}(n,\delta)$.}
\label{fig:EX2}
\end{figure}

The next lemma provides an upper bound for the largest $K_3$-tiling in $\text{EX}_{\triangle}^{\text{Alt}}(n,\delta)$ consisting of monochromatic copies of $K_3$; this bound matches exactly part~\ref{case:Moonlarge} of Theorem~\ref{theorem:alaMoon}.

\begin{lemma}\label{lemma:exMoonAlt}
Let~$n,\delta\in\mathbb N$ such that~$7n/8\le\delta\le n-1$.
Then~$\text{EX}_{\triangle}^{\text{Alt}}(n,\delta)$ is an $n$-vertex graph with minimum degree~$\delta(\text{EX}_{\triangle}^{\text{Alt}}(n,\delta))=\delta$.
Furthermore, for any collection~$\mathcal F$ of \hbox{vertex-disjoint} monochromatic copies of~$K_3$ in~$\text{EX}_{\triangle}^{\text{Alt}}(n,\delta)$, we have~$|\mathcal F|\le (2\delta-n)/3$.
\end{lemma}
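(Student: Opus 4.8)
The plan is to follow the same template as the proof of Lemma~\ref{lemma:exMoon}: first dispatch the elementary facts about the order and minimum degree of $\text{EX}_{\triangle}^{\text{Alt}}(n,\delta)$, and then classify exactly which monochromatic triangles can occur. For the order, one simply adds $|S_1|+|S_2|+|R|=2(n-\delta)+(2\delta-n)=n$. For the minimum degree, I would compute the degree class by class: a vertex of $S_1$ (and, symmetrically, of $S_2$) is adjacent to every vertex of $S_2$ (respectively $S_1$) and every vertex of $R$, but to no other vertex of its own class, giving degree $(n-\delta)+(2\delta-n)=\delta$; a vertex of $R$ is adjacent to all other vertices, giving degree $n-1\ge\delta$. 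Hence $\delta(\text{EX}_{\triangle}^{\text{Alt}}(n,\delta))=\delta$.

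The heart of the argument is to show that the only monochromatic triangles are red triangles lying entirely inside $R$. First I would observe that the blue edges form a bipartite graph with parts $S_1\cup S_2$ and $R$, and a bipartite graph is triangle-free, so there is no blue triangle at all. Next I would describe the red graph: the red edges are precisely the edges inside $R$ together with all edges between $S_1$ and $S_2$, so the red graph is the vertex-disjoint union of a clique on $R$ and a complete bipartite graph between $S_1$ and $S_2$. Since a complete bipartite graph is triangle-free, every red triangle must lie inside the clique on $R$.

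With this structural fact in hand, the bound is immediate. Any collection $\mathcal F$ of vertex-disjoint monochromatic copies of $K_3$ is then a collection of vertex-disjoint triangles inside $R$, so $3|\mathcal F|\le |R|=2\delta-n$, that is, $|\mathcal F|\le (2\delta-n)/3$, as claimed.

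There is essentially no serious obstacle here; the only point requiring a moment's care is to check that no monochromatic triangle can combine the two components of the red graph or use a red $S_1$--$S_2$ edge with a third vertex. This is handled cleanly by the two observations that all edges between $R$ and $S_1\cup S_2$ are blue (so there are no red edges joining the two red components) and that $S_1$ and $S_2$ are independent (so the complete bipartite red part genuinely contains no triangle).
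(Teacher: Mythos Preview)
Your proof is correct and follows essentially the same approach as the paper's: both verify the order and minimum degree directly, note that the blue graph is bipartite (hence triangle-free), and conclude that every monochromatic triangle lies inside $R$, giving $|\mathcal F|\le |R|/3=(2\delta-n)/3$. Your version is in fact a bit more explicit than the paper's about why red triangles cannot use the $S_1$--$S_2$ edges.
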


\begin{proof}
We have~$|\text{EX}_{\triangle}^{\text{Alt}}(n,\delta)|=|S_1|+|S_2|+|R|=2(n-\delta)+(2\delta-n)=n$.
Every vertex in~$S_1\cup S_2$ has degree~$\delta$; all remaining vertices have degree~$n-1$.
In particular,~$\delta(\text{EX}_{\triangle}^{\text{Alt}}(n,\delta))=\delta$.

Let~$\mathcal F$ be a collection of \hbox{vertex-disjoint} monochromatic copies of~$K_3$ in~$\text{EX}_{\triangle}^{\text{Alt}}(n,\delta)$.
Observe that the blue edges of~$\text{EX}_{\triangle}^{\text{Alt}}(n,\delta)$ form a complete bipartite graph, hence there is no blue monochromatic copy of~$K_3$.
Moreover, any red monochromatic copy of~$K_3$ must lie in~$R$.
Since~$|R|=2\delta-n$, it follows that~$|\mathcal F|\le(2\delta-n)/3$.
\end{proof}

\subsection{Extremal examples for Theorem~\ref{theorem:alaBES} and Question~\ref{conjecture:alaBES}}\label{sec2.2}

We have three different constructions.
We start with the one for large degree, which proves the sharpness of part~\ref{case:conjBESlarge} of Question~\ref{conjecture:alaBES} and thus of part~\ref{case:BESlarge} of Theorem~\ref{theorem:alaBES}.

\begin{exex}\label{exex:BES1}
For every~$n,\delta\in\mathbb N$ with $5 \leq \delta\le n-1$,\footnote{We need $5\leq \delta$ here to ensure $|B|\ge 0$.} 
we write~$\text{EX}_{\blacktriangle}^1(n,\delta)$ to denote the following red/blue \hbox{edge-coloured} graph.
We have~$V(\text{EX}_{\blacktriangle}^1(n,\delta))=R\dot\cup B\dot\cup S$ where~$|S|=n-\delta\ge1$, $|R|=3\lfloor(\delta+1)/5\rfloor+2\ge 5$ and $|B|=n-|S|-|R|\ge0$.
The set~$S$ is independent and 
all other pairs of vertices form an edge.
All edges lying in $R$ as well as the edges incident to both $S$ and $B$ are red.
All edges lying in $B$ as well as the edges incident to both $R$ and $S\cup B$ are blue.
See Figure~\ref{fig:EX3} for a representation of~$\text{EX}_{\blacktriangle}^1(n,\delta)$.
\end{exex}

\begin{figure}[h!]
\centering
\begin{tikzpicture}[scale=0.76]

\filldraw[color=red!50,fill=red!50,very thick] (0,0.5)--(2,3.5)--(2,2.5)--(0,-0.5)--(0,0.5);

\filldraw[color=blue!50,fill=blue!50,very thick] (0,-0.5)--(4.5,-1.5)--(4.5,1.5)--(0,0.5)--(0,-0.5);

\filldraw[color=blue!50,fill=blue!50,very thick] (4.9,0)--(2.4,3)--(1.6,3)--(4.1,0)--(4.9,0);

\filldraw[color=black,fill=blue!50,very thick] (0,0) circle (1.6cm);

\filldraw[color=black,fill=white,very thick] (2,3) circle (1cm);

\filldraw[color=black,fill=red!50,very thick] (4.5,0) circle (2cm);

\node at (0,0) {$B$};
\node at (2,3) {$S$};
\node at (4.5,0) {$R$};
    
\end{tikzpicture}
\caption{A representation of the graph~$\text{EX}_{\blacktriangle}^1(n,\delta)$.
The ratio $|B|/|R|$ is approximately $2/3$.}
\label{fig:EX3}
\end{figure}

In the next lemma, we give an upper bound for the largest $m$ such that $\text{EX}_{\blacktriangle}^1(n,\delta)$ contains a monochromatic $mK_3$.

\begin{lemma}\label{lemma:BES1}
Let~$n,\delta\in\mathbb N$ with $5 \leq \delta\le n-1$.
Then~$\text{EX}_{\blacktriangle}^1(n,\delta)$ is an $n$-vertex graph with minimum degree~$\delta(\text{EX}_{\blacktriangle}^1(n,\delta))=\delta$.
Furthermore, for any monochromatic copy of~$mK_3$ in~$\text{EX}_{\blacktriangle}^1(n,\delta)$ we have $m\le\lfloor(\delta+1)/5\rfloor$.
\end{lemma}

\begin{proof}
We have~$|\text{EX}_{\blacktriangle}^1(n,\delta)|=|R|+|B|+|S|=n$.
Every vertex in~$S$ has degree~$\delta$, all other vertices have degree $n-1$,
implying $\delta(\text{EX}_{\blacktriangle}^1(n,\delta))=\delta$.
Also, note that
$$(3\lfloor(\delta+1)/5\rfloor+2)
+(2\lfloor(\delta+1)/5\rfloor+1)
=5\lfloor(\delta+1)/5\rfloor+3
\ge5(\delta-3)/5+3=\delta.$$
Since $|B|=n-|S|-|R|=\delta-|R|$ it follows that $|B|\le2\lfloor(\delta+1)/5\rfloor+1$.

Observe that there is no monochromatic  $K_3$ intersecting $S$, i.e.,  every monochromatic copy of $K_3$ must lie in $R\cup B$.
In particular, a red copy of $K_3$ must lie completely in $R$, while a blue copy of $K_3$ must have at least two vertices in $B$.
Therefore, if there is a monochromatic  $mK_3$ then

$$m\le\max\left\{\left\lfloor\frac{|R|}{3}\right\rfloor,\left\lfloor\frac{|B|}{2}\right\rfloor\right\}
\le\max\left\{\left\lfloor\frac{3\lfloor(\delta+1)/5\rfloor+2}{3}\right\rfloor,\left\lfloor\frac{2\lfloor(\delta+1)/5\rfloor+1}{2}\right\rfloor\right\}
=\left\lfloor\frac{\delta+1}{5}\right\rfloor.$$

\end{proof}

The next construction and lemma show that, if true, then 
\ref{case:conjBESmedium} of Question~\ref{conjecture:alaBES} is sharp.

\begin{exex}\label{exex:BES2}
For every~$n,\delta\in\mathbb N$ such that $n \geq 25$ and~$4n/5\le \delta\le n-1$, we write~$\text{EX}_{\blacktriangle}^2(n,\delta)$ to denote the following red/blue \hbox{edge-coloured} graph.
We have~$V(\text{EX}_{\blacktriangle}^2(n,\delta))=V_1\dot\cup\dots\dot\cup V_5$ where~$|V_i|=n-\delta$ for every~$2\le i\le 5$ and $|V_1|=4\delta-3n$.
Furthermore,~$V_1=R\dot\cup B$ where $|R|=2\left\lfloor\frac{4\delta-3n+1}{3}\right\rfloor+1$ and $|B|=|V_1|-|R|\geq 0$.
The sets~$V_2,\dots,V_5$ are independent, and
all remaining pairs of vertices form an edge.
The subgraph~$\text{EX}_{\blacktriangle}^2(n,\delta)[V_1,V_2,V_3,V_4,V_5]$ is a blow-up of a badly coloured~$K_5$. 
The edges lying in~$R$ are red, and 
the edges lying in $B$ and the edges incident to both $B$ and $R$ are blue.
See Figure~\ref{fig:EX4} for a representation of~$\text{EX}_{\blacktriangle}^2(n,\delta)$.
\end{exex}

\begin{figure}[h!]
\centering
\begin{tikzpicture}[scale=0.76]

\filldraw[color=red!50,fill=red!50,very thick] (18:3.3)--(90:3.3)--(162:3.3)--(234:3.3)--(306:3.3)--(18:3.3);

\filldraw[color=red!50,fill=white,very thick] (18:2.7)--(90:2.7)--(162:2.7)--(234:2.7)--(306:2.7)--(18:2.7);

\filldraw[color=blue!50,fill=blue!50,very thick] (18:3.6)--(162:3.6)--(162:2.4)--(18:2.4)--(18:3.6);

\filldraw[color=blue!50,fill=blue!50,very thick] (18:3.6)--(234:3.6)--(234:2.4)--(18:2.4)--(18:3.6);

\filldraw[color=blue!50,fill=blue!50,very thick] (90:3.6)--(234:3.6)--(234:2.4)--(90:2.4)--(90:3.6);

\filldraw[color=blue!50,fill=blue!50,very thick] (90:3.6)--(306:3.6)--(306:2.4)--(90:2.4)--(90:3.6);

\filldraw[color=blue!50,fill=blue!50,very thick] (162:3.6)--(306:3.6)--(306:2.4)--(162:2.4)--(162:3.6);

\filldraw[color=black,fill=white,very thick] (18:3) circle (1cm);

\filldraw[color=black,fill=white,very thick] (90:3) circle (1.6cm);

\filldraw[color=black,fill=white,very thick] (162:3) circle (1cm);

\filldraw[color=black,fill=white,very thick] (234:3) circle (1cm);

\filldraw[color=black,fill=white,very thick] (306:3) circle (1cm);

\filldraw[color=blue!50,fill=blue!50,very thick] (100:3.8)--(68:3.3)--(68:2.7)--(100:2.25)--(100:3.8);

\filldraw[color=black,fill=red!50,very thick] (100:3) circle (0.8cm);

\filldraw[color=black,fill=blue!50,very thick] (68:3) circle (0.4cm);

\node at (18:3) {$V_5$};
\node at (162:3) {$V_2$};
\node at (234:3) {$V_3$};
\node at (306:3) {$V_4$};

\node at (100:3) {$R$};
\node at (68:3) {$B$};

\node at (82:4) {$V_1$};
    
\end{tikzpicture}
\caption{A representation of the graph~$\text{EX}_{\blacktriangle}^2(n,\delta)$.
Note that removing all edges inside $V_1=R\dot\cup B$ yields an unbalanced blow-up of $K_5$ with no monochromatic triangle.
The ratio $|B|/|R|$ is approximately $1/2$.}
\label{fig:EX4}
\end{figure}

\begin{lemma}\label{lemma:Bes2}
Let~$n,\delta\in\mathbb N$ such that $n\geq 25$ and~$4n/5\le \delta\le n-1$.
Then~$\text{EX}_{\blacktriangle}^2(n,\delta)$ is an $n$-vertex graph with minimum degree~$\delta(\text{EX}_{\blacktriangle}^2(n,\delta))=\delta$.
Furthermore, for any monochromatic copy of~$mK_3$ in~$\text{EX}_{\blacktriangle}^2(n,\delta)$ we have $m\le\lfloor(4\delta-3n+1)/3\rfloor$.
\end{lemma}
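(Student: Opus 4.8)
The plan is to follow the same template as the proof of Lemma~\ref{lemma:BES1}: first dispose of the elementary parameter count, then bound the red and the blue monochromatic $K_3$-tilings separately, and finally observe that both bounds coincide with $\lfloor(4\delta-3n+1)/3\rfloor$. For the parameter count, note that $|V_1|+\dots+|V_5|=(4\delta-3n)+4(n-\delta)=n$; every vertex of $V_1$ is adjacent to all others and so has degree $n-1$, while every vertex of $V_i$ with $i\ge2$ misses exactly its own (independent) class and hence has degree $n-(n-\delta)=\delta$. Thus $\delta(\text{EX}_{\blacktriangle}^2(n,\delta))=\delta$, since $\delta\le n-1$.

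The structural heart is the following observation. Since $\text{EX}_{\blacktriangle}^2(n,\delta)[V_1,\dots,V_5]$ is a blow-up of a badly coloured $K_5$, it contains no monochromatic copy of $K_3$. Consequently, any monochromatic triangle whose vertices lie in three distinct classes would be contained in this blow-up, which is impossible; so a monochromatic triangle must have two vertices in a common class, joined by an intra-class edge. As $V_2,\dots,V_5$ are independent, the only intra-class edges lie inside $V_1$, and therefore \emph{every monochromatic triangle has at least two vertices in $V_1$}.

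Next I would split according to the colour of the tiling $mK_3$, which being monochromatic is entirely red or entirely blue. For a red triangle, the edge joining its two vertices in $V_1$ is red; but inside $V_1$ the only red edges lie within $R$, so both vertices lie in $R$, whence each red triangle uses at least two vertices of $R$ and $m\le\lfloor|R|/2\rfloor$. For a blue triangle, the edge joining its two vertices in $V_1$ is blue; since all edges within $R$ are red, the set $R$ is blue-independent, so this blue edge must be incident to $B$, whence each blue triangle uses at least one vertex of $B$ and $m\le|B|$.

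It remains to check that both bounds equal $\lfloor(4\delta-3n+1)/3\rfloor$. Writing $t:=|V_1|=4\delta-3n$ and recalling $|R|=2\lfloor(t+1)/3\rfloor+1$, one gets $\lfloor|R|/2\rfloor=\lfloor(t+1)/3\rfloor$ since $|R|$ is odd, and $|B|=t-|R|=t-1-2\lfloor(t+1)/3\rfloor\le\lfloor(t+1)/3\rfloor$ using $3\lfloor(t+1)/3\rfloor\ge(t+1)-2$. I expect the two colour-class arguments to be the only genuinely structural steps; the sole place demanding care is the floor arithmetic reconciling $|B|$ and $\lfloor|R|/2\rfloor$ with the claimed bound, which is where an off-by-one could creep in.
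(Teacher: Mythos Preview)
Your proof is correct and follows essentially the same approach as the paper: both arguments reduce to the observation that every monochromatic triangle has at least two vertices in $V_1$, and then bound the red tilings by $\lfloor|R|/2\rfloor$ and the blue tilings by $|B|$ via the same colour structure inside $V_1$. The paper phrases this as bounding $m$ by the largest monochromatic matching in $V_1$, which is exactly your per-triangle count; the floor arithmetic showing $|B|\le\lfloor(4\delta-3n+1)/3\rfloor$ is also the same as the paper's (one minor wording slip: you say both bounds \emph{equal} the target, but $|B|$ is only $\le$ it in general, which is all you need and all you actually prove).
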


\begin{proof}
We have~$|\text{EX}_{\blacktriangle}^2(n,\delta)|=|V_1|+\ldots+|V_5|=(4\delta-3n)+4(n-\delta)=n$ and the minimum degree is attained by vertices not in $V_1$, yielding $\delta(\text{EX}_{\blacktriangle}^2(n,\delta))=\delta$.
Also, note that
\begin{align*}
\left(2\left\lfloor\frac{4\delta-3n+1}{3}\right\rfloor+1\right)+\left\lfloor\frac{4\delta-3n+1}{3}\right\rfloor 
& = 3\left\lfloor\frac{4\delta-3n+1}{3}\right\rfloor+1 \\
& \ge3\left(\frac{4\delta-3n-1}{3}\right)+1=4\delta-3n=|V_1|.
\end{align*}
Since $|R|+|B|=|V_1|$, it follows that $|B|\le\lfloor(4\delta-3n+1)/3\rfloor$.

As~$\text{EX}_{\blacktriangle}^2(n,\delta)[V_1,V_2,V_3,V_4,V_5]$ is a blow-up of a badly coloured~$K_5$, every monochromatic  $K_3$ in $\text{EX}_{\blacktriangle}^2(n,\delta)$ must have at least two vertices in $V_1$.
Therefore, $m$ is bounded above by the size of the largest monochromatic matching in $V_1$.
Observe that all red edges in $V_1$ lie in $R$, whereas all blue edges in $V_1$ are incident to $B$.
Therefore, if there is a monochromatic $mK_3$ then
$$m\le\max\left\{\left\lfloor\frac{|R|}{2}\right\rfloor,|B|\right\}=\left\lfloor\frac{4\delta-3n+1}{3}\right\rfloor ,$$
as required.
\end{proof}

Our final construction shows the sharpness of part~\ref{case:conjBESsmall} of Question~\ref{conjecture:alaBES} and thus of part~\ref{case:BESsmall} of Theorem~\ref{theorem:alaBES}.

\begin{exex}\label{exex:}
For every~$n,\delta\in\mathbb N$ such that~$4n/5\le \delta \leq n-1$, we write~$\text{EX}_{\blacktriangle}^3(n,\delta)$ to denote the following red/blue \hbox{edge-coloured} graph.
We have~$V(\text{EX}_{\blacktriangle}^3(n,\delta))=V_1\dot\cup\dots\dot\cup V_5$ where~$|V_i|=n-\delta$ for every~$2\le i\le 5$ and $|V_1|=4\delta-3n$.
Furthermore,~$V_1=R\dot\cup B\dot\cup S$ where $|S|=n-\delta$, $|R|=\left\lceil\frac{5\delta-4n}{2}\right\rceil$ and $|B|=\left\lfloor\frac{5\delta-4n}{2}\right\rfloor$.
The sets~$V_2,\dots,V_5$ and $S$ are independent, and 
all other  pairs of vertices form an edge.
The subgraph~$\text{EX}_{\blacktriangle}^3(n,\delta)[V_1,V_2,V_3,V_4,V_5]$ is a blow-up of a badly coloured~$K_5$.
The edges in~$R\cup  S$ are red, and
the edges incident to $B$ in $V_1$ are blue.
See Figure~\ref{fig:EX5} for a representation of~$\text{EX}_{\blacktriangle}^3(n,\delta)$.
\end{exex}

\begin{figure}[h!]
\centering
\begin{tikzpicture}[scale=0.76]

\filldraw[color=red!50,fill=red!50,very thick] (18:3.3)--(90:3.3)--(162:3.3)--(234:3.3)--(306:3.3)--(18:3.3);

\filldraw[color=red!50,fill=white,very thick] (18:2.7)--(90:2.7)--(162:2.7)--(234:2.7)--(306:2.7)--(18:2.7);

\filldraw[color=blue!50,fill=blue!50,very thick] (18:3.6)--(162:3.6)--(162:2.4)--(18:2.4)--(18:3.6);

\filldraw[color=blue!50,fill=blue!50,very thick] (18:3.6)--(234:3.6)--(234:2.4)--(18:2.4)--(18:3.6);

\filldraw[color=blue!50,fill=blue!50,very thick] (90:3.6)--(234:3.6)--(234:2.4)--(90:2.4)--(90:3.6);

\filldraw[color=blue!50,fill=blue!50,very thick] (90:3.6)--(306:3.6)--(306:2.4)--(90:2.4)--(90:3.6);

\filldraw[color=blue!50,fill=blue!50,very thick] (162:3.6)--(306:3.6)--(306:2.4)--(162:2.4)--(162:3.6);

\filldraw[color=black,fill=white,very thick] (18:3) circle (1cm);

\filldraw[color=black,fill=white,very thick] (90:3) circle (1.6cm);

\filldraw[color=black,fill=white,very thick] (162:3) circle (1cm);

\filldraw[color=black,fill=white,very thick] (234:3) circle (1cm);

\filldraw[color=black,fill=white,very thick] (306:3) circle (1cm);

\filldraw[color=red!50,fill=red!50,very thick] (107:2.5)--(90:3.6)--(90:4)--(107:2.9)--(107:2.5);

\filldraw[color=blue!50,fill=blue!50,very thick] (73:2.5)--(90:3.6)--(90:4)--(73:2.9)--(73:2.5);

\filldraw[color=blue!50,fill=blue!50,very thick] (73:2.5)--(107:2.5)--(107:2.9)--(73:2.9)--(73:2.5);

\filldraw[color=black,fill=red!50,very thick] (107:2.7) circle (0.6cm);

\filldraw[color=black,fill=blue!50,very thick] (73:2.7) circle (0.6cm);

\filldraw[color=black,fill=white,very thick] (90:3.8) circle (0.6cm);

\node at (18:3) {$V_5$};
\node at (162:3) {$V_2$};
\node at (234:3) {$V_3$};
\node at (306:3) {$V_4$};

\node at (107:2.7) {$R$};
\node at (73:2.7) {$B$};

\node at (90:3.8) {$S$};

\node at (73:3.8) {$V_1$};
    
\end{tikzpicture}
\caption{A representation of the graph~$\text{EX}_{\blacktriangle}^3(n,\delta)$.
Note that removing all edges inside $V_1=R\dot\cup B\dot\cup S$ yields an unbalanced blow-up of $K_5$ with no monochromatic triangle.}
\label{fig:EX5}
\end{figure}

\begin{lemma}\label{lemma:exeBes3}
Let~~$n,\delta\in\mathbb N$ such that~$4n/5\le \delta \leq n-1$.
Then~$\text{EX}_{\blacktriangle}^3(n,\delta)$ is an~$n$-vertex graph with minimum degree~$\delta(\text{EX}_{\blacktriangle}^3(n,\delta))=\delta$.
Furthermore, for any monochromatic copy of~$mK_3$ in~$\text{EX}_{\blacktriangle}^3(n,\delta)$ we have $m\le\lceil(5\delta-4n)/2\rceil$.
\end{lemma}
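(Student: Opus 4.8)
The plan is to follow the same template used in the proofs of Lemmas~\ref{lemma:exMoon}, \ref{lemma:BES1} and~\ref{lemma:Bes2}, since $\text{EX}_{\blacktriangle}^3(n,\delta)$ is built from the same blow-up-of-a-badly-coloured-$K_5$ skeleton. First I would dispatch the two routine structural claims. The vertex count is immediate: $|\text{EX}_{\blacktriangle}^3(n,\delta)|=|V_1|+\dots+|V_5|=(4\delta-3n)+4(n-\delta)=n$. For the minimum degree I would observe that the vertices of $R$ and $B$ are joined to every other vertex of $V_1$ (as $S$ is the only independent set inside $V_1$) and to all of $V_2,\dots,V_5$, so they have degree $n-1$; meanwhile each vertex of $S$ misses only the rest of $S$, giving degree $n-|S|=\delta$, and each vertex of $V_i$ ($2\le i\le5$) misses only the rest of $V_i$, giving degree $n-|V_i|=\delta$. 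Hence $\delta(\text{EX}_{\blacktriangle}^3(n,\delta))=\delta$.

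The heart of the argument is the observation, already exploited in Lemma~\ref{lemma:exMoon}, that a blow-up of a badly coloured $K_5$ contains no monochromatic copy of $K_3$. Since $\text{EX}_{\blacktriangle}^3(n,\delta)[V_1,V_2,V_3,V_4,V_5]$ is precisely such a blow-up, and $V_1$ is the only part carrying internal edges, every monochromatic copy of $K_3$ in $\text{EX}_{\blacktriangle}^3(n,\delta)$ must use an edge lying inside $V_1$. I would then read off, directly from the definition of the colouring on $V_1=R\,\dot\cup\,B\,\dot\cup\,S$, exactly which internal edges of $V_1$ carry which colour: the red internal edges are those within $R$ together with those between $R$ and $S$ (the edges ``in $R\cup S$''), while the blue internal edges are exactly those incident to $B$, namely the edges within $B$, between $R$ and $B$, and between $B$ and $S$. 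The key conclusion to extract is that \emph{every} red internal edge of $V_1$ meets $R$, and \emph{every} blue internal edge of $V_1$ meets $B$.

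To finish, I would split on the colour of the tiling: a monochromatic $mK_3$ is either entirely red or entirely blue. In the red case each of the $m$ vertex-disjoint triangles contains a red internal edge of $V_1$ and therefore a vertex of $R$; as the triangles are vertex-disjoint these $R$-vertices are distinct, whence $m\le|R|=\left\lceil\frac{5\delta-4n}{2}\right\rceil$. In the blue case each triangle contains a vertex of $B$, so $m\le|B|=\left\lfloor\frac{5\delta-4n}{2}\right\rfloor\le\left\lceil\frac{5\delta-4n}{2}\right\rceil$. Either way $m\le\left\lceil\frac{5\delta-4n}{2}\right\rceil$, as required.

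I do not expect a genuine obstacle here. The only point requiring care is making the case analysis of the internal edges of $V_1$ exhaustive and correctly coloured, so as to be certain that a red triangle cannot avoid $R$ and a blue triangle cannot avoid $B$; but this is forced by the definition of the colouring, and the remaining work is the routine bookkeeping common to the preceding extremal lemmas.
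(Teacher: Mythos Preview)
Your proposal is correct and follows essentially the same route as the paper: both use that the blow-up of a badly coloured $K_5$ has no monochromatic triangle to force each monochromatic $K_3$ to have an edge inside $V_1$, and then observe that every red internal edge of $V_1$ meets $R$ while every blue one meets $B$, yielding $m\le\max\{|R|,|B|\}=\lceil(5\delta-4n)/2\rceil$. The only cosmetic difference is that the paper phrases the final step as bounding the size of a monochromatic matching in $V_1$, whereas you argue directly that each triangle contributes a distinct vertex of $R$ (respectively $B$); the content is identical.
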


\begin{proof}
We have~$|\text{EX}_{\blacktriangle}^3(n,\delta)|=|V_1|+\ldots+|V_5|=(4\delta-3n)+4(n-\delta)=n$, and the minimum degree is
$\delta(\text{EX}_{\blacktriangle}^3(n,\delta))=\delta$, attained by the vertices in $V_2\cup \ldots \cup V_5\cup S$.

As~$\text{EX}_{\blacktriangle}^3(n,\delta)[V_1,V_2,V_3,V_4,V_5]$ is a blow-up of a badly coloured~$K_5$, every monochromatic  $K_3$ in $\text{EX}_{\blacktriangle}^3(n,\delta)$ must have at least two vertices in $V_1$.
Therefore, $m$ is bounded above by the size of the largest monochromatic matching in $V_1$.
Observe that all red edges in $V_1$ are adjacent to $R$, whereas all blue edges in $V_1$ are incident to $B$.
Hence, any monochromatic matching has size at most $\max\{|R|,|B|\}=\left\lceil\frac{5\delta-4n}{2}\right\rceil$, implying $m\le\left\lceil\frac{5\delta-4n}{2}\right\rceil$ as required.
\end{proof}


\section{Proof of Theorem~\ref{theorem:alaMoon}}

The proofs of parts~\ref{case:Moonlarge},~\ref{case:Moonmedium} and~\ref{case:Moonsmall} of Theorem~\ref{theorem:alaMoon} follow a common strategy.
We first partition the vertex set of the host graph into \hbox{vertex-disjoint} cliques of appropriate size and then find monochromatic copies of~$K_3$ within each clique.
The first step is essentially achieved by applying the \hbox{Hajnal--Szemer\'edi} theorem, which we now state.

\begin{theorem}[Hajnal--Szemer\'edi theorem ~\cite{HajnalS}]\label{thm:HajnalSzm}
Let~$n,t\in\mathbb N$ such that~$t$ divides~$n$.
If~$G$ is a graph on~$n$ vertices with~$\delta(G)\ge(1-1/t)n$ then~$G$ contains a perfect~\hbox{$K_t$-tiling}.
\end{theorem}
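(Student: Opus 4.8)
The plan is to prove the equivalent statement about equitable colourings of the complement. Writing $H:=\bar G$ for the complement of $G$, the hypothesis $\delta(G)\ge(1-1/t)n$ translates into $\Delta(H)\le n-1-(1-1/t)n=n/t-1$. A perfect $K_t$-tiling of $G$ is exactly a partition of $V(G)$ into $k:=n/t$ sets that are independent in $H$ and each of size $t$; that is, a proper $k$-colouring of $H$ in which every colour class has the same size. Since $\Delta(H)\le k-1$, a proper $k$-colouring of $H$ certainly exists (greedily, as $\Delta(H)+1\le k$); the whole difficulty is to make it \emph{equitable}, i.e.\ to force every class to have size exactly $t=n/k$.

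I would prove the equitable-colouring statement by induction on $n$, inserting the vertices of $H$ one at a time while maintaining a proper $k$-colouring whose class sizes are as balanced as possible (differing by at most one). Suppose $H-v$ has already been coloured with classes $V_1,\dots,V_k$ of near-equal size; this is available by induction, as $\Delta(H-v)\le\Delta(H)\le k-1$. When we add $v$, the bound $\deg_H(v)\le k-1$ guarantees some class $V_i$ contains no neighbour of $v$, so we may legally place $v$ in $V_i$. If $V_i$ was a smallest class the colouring stays balanced and we are done; otherwise $V_i$ becomes oversized while some class $V_j$ is undersized, and we must rebalance.

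The heart of the argument, and the step I expect to be the main obstacle, is this rebalancing. The idea is to build an auxiliary digraph whose vertices are the colour classes, placing an arc from a class $A$ to a class $B$ whenever some vertex of $A$ has no neighbour in $B$, so that it could be moved from $A$ to $B$ while keeping the colouring proper. A directed path from the oversized class $V_i$ to the undersized class $V_j$ then yields a sequence of single-vertex moves that transfers the surplus and restores balance. One must show such a structure always exists: if no augmenting path were available, the classes reachable from $V_i$ would form a set on which every vertex has too many neighbours outside, contradicting $\Delta(H)\le k-1$ through a counting (discharging) argument. Handling the case where no \emph{simple} augmenting path works requires the more delicate notion of movable vertices together with a nested exchange across several classes simultaneously; this is the genuinely technical core of the Hajnal--Szemer\'edi proof, and the degree bound $\Delta(H)\le k-1$ is used precisely to guarantee that such an exchange structure can always be located.
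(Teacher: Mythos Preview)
The paper does not prove this theorem at all: it is stated with the citation~\cite{HajnalS} and simply used as a black box (the paper only deduces the interpolated version, Theorem~\ref{thm:HajnalSzm2}, from it). So there is no ``paper's own proof'' to compare against.

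Your reformulation is correct and standard: passing to the complement turns a perfect $K_t$-tiling into an equitable proper $k$-colouring with $k=n/t$ and $\Delta(H)\le k-1$, and this is exactly the viewpoint of the short Kierstead--Kostochka proof~\cite{KiersteadK} that the paper cites in its concluding remarks. However, what you have written is an outline rather than a proof, and you explicitly acknowledge this. The genuine gap is the rebalancing step. Your sketch (``if no augmenting path were available, the classes reachable from $V_i$ would force too many neighbours outside, contradicting $\Delta(H)\le k-1$'') does not go through as stated: simple augmenting paths in the class-digraph are \emph{not} guaranteed to exist, and a plain counting argument on the reachable set does not yield a contradiction. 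The actual proof requires the more delicate machinery you allude to---solo vertices, movable vertices, and a carefully chosen multi-class swap---and the degree bound is exploited in a specific double-counting over a particular bipartite substructure, not just a global discharging. Since you defer precisely this part (``this is the genuinely technical core''), the proposal as it stands is a correct strategy but not a proof.
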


It will be convenient to use the following apparently stronger but in fact equivalent statement to the
Hajnal--Szemer\'edi theorem. While it is a well-known statement, for the sake of completeness we show how to deduce it from the Hajnal--Szemer\'edi theorem.

\begin{theorem}\label{thm:HajnalSzm2}
Let~$n,t\in\mathbb N$ and~$G$ be a graph on~$n$ vertices such that 
$$\left(1-\frac{1}{t-1}\right)n\le\delta(G)\le\left(1-\frac{1}{t}\right)n.$$
Then~$G$ contains a~\hbox{$K_t$-tiling} consisting of~$(t-1)\delta(G)-(t-2)n$ copies of~$K_t$ and a~\hbox{$K_{t-1}$-tiling} consisting of~$(t-1)n-t\delta(G)$ copies of~$K_{t-1}$, such that the two tilings are \hbox{vertex-disjoint}.
\end{theorem}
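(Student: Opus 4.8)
The plan is to deduce this from the Hajnal--Szemer\'edi theorem by a standard vertex-augmentation trick. Write $\delta:=\delta(G)$ and set
$$a:=(t-1)\delta-(t-2)n \qquad\text{and}\qquad b:=(t-1)n-t\delta,$$
so that $a$ is the desired number of copies of $K_t$ and $b$ the desired number of copies of $K_{t-1}$. The hypothesis $\delta\ge(1-\tfrac{1}{t-1})n$ gives $a\ge0$, the hypothesis $\delta\le(1-\tfrac1t)n$ gives $b\ge0$, and a short computation shows $at+b(t-1)=n$, so the two tilings (if they exist) are forced to partition $V(G)$. My plan is to build an auxiliary graph $G'$ by adding to $G$ an \emph{independent} set $W$ of exactly $b$ new vertices, each joined to every vertex of $V(G)$ but to no other vertex of $W$, and then to apply Theorem~\ref{thm:HajnalSzm} to $G'$.

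First I would check that $G'$ meets the hypotheses of the Hajnal--Szemer\'edi theorem. The number of vertices is $|G'|=n+b=t(n-\delta)$, which is divisible by $t$, so the divisibility requirement holds. For the degree condition one needs $\delta(G')\ge(1-\tfrac1t)|G'|=(t-1)(n-\delta)$. A vertex of the original graph $G$ gains all $b$ neighbours in $W$, so its degree in $G'$ is at least $\delta+b=(t-1)(n-\delta)$; a vertex of $W$ is adjacent precisely to the $n$ vertices of $V(G)$, and the bound $n\ge(t-1)(n-\delta)$ is exactly a restatement of the lower hypothesis $\delta\ge(1-\tfrac1{t-1})n$. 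Hence $\delta(G')\ge(1-\tfrac1t)|G'|$, and Theorem~\ref{thm:HajnalSzm} yields a perfect $K_t$-tiling $\mathcal{T}$ of $G'$ consisting of $|G'|/t=n-\delta$ cliques.

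The final step is to strip the new vertices back off. Since $W$ is independent, no copy of $K_t$ in $\mathcal{T}$ can contain two vertices of $W$, so each clique of $\mathcal{T}$ meets $W$ in at most one vertex; as $\mathcal{T}$ is perfect and $|W|=b$, exactly $b$ cliques contain a (unique) vertex of $W$, and the remaining $(n-\delta)-b=a$ cliques lie entirely in $G$. The latter $a$ cliques are the required vertex-disjoint copies of $K_t$ in $G$, while deleting the single $W$-vertex from each of the former $b$ cliques leaves $b$ vertex-disjoint copies of $K_{t-1}$ in $G$, disjoint from the $K_t$'s. This gives exactly the two tilings claimed.

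The only genuinely delicate point is the choice to make $W$ \emph{independent} rather than a clique: this is what guarantees that each copy of $K_t$ in $\mathcal{T}$ absorbs at most one new vertex, producing a clean split into $K_t$'s and $K_{t-1}$'s (as opposed to cliques $K_{t-j}$ with $j\ge2$). The price is that the vertices of $W$ have the smaller degree $n$, and verifying that this still clears the Hajnal--Szemer\'edi threshold is precisely where the lower degree bound in the hypothesis is consumed; everything else is routine bookkeeping with the identities $n+b=t(n-\delta)$ and $(n-\delta)-b=a$.
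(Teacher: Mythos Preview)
Your proof is correct and essentially identical to the paper's: both add an independent set of $(t-1)n-t\delta$ universal new vertices, verify that the augmented graph has $t(n-\delta)$ vertices and minimum degree $(t-1)(n-\delta)$, apply the Hajnal--Szemer\'edi theorem, and then peel off the new vertices (using their independence to guarantee at most one per clique). Your write-up is slightly more explicit about which hypothesis is consumed where, but the argument is the same.
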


\begin{proof}
Let~$G'$ be the graph obtained by adding a set~$S$ of~$(t-1)n-t\delta(G)\ge0$ new vertices to~$G$ and all edges with exactly one vertex in~$S$.
Then~$G'$ is a graph on~$n':=n+|S|=t(n-\delta(G))$ vertices with minimum degree
\begin{align*}
\delta(G') & =\min\{\delta(G)+|S|,n\}=\min\{(t-1)(n-\delta(G)),n\}=(t-1)(n-\delta(G)).
\end{align*}
In particular,~$\delta(G')=(1-1/t)n'$ and~$n'$ is divisible by~$t$.
By the \hbox{Hajnal--Szemer\'edi} theorem (Theorem~\ref{thm:HajnalSzm}),~$G'$ contains a perfect~\hbox{$K_t$-tiling} consisting of~$n'/t$ copies of~$K_t$.
Observe that no edge lies inside~$S$, thus each copy of~$K_t$ contains at most one vertex in~$S$.
In particular,~$n'/t-|S|$ copies of $K_t$ do not contain a vertex from $S$ and $|S|$ copies of $K_t$ contain exactly one vertex from $S$.

It follows that the original graph~$G$ contains a~\hbox{$K_t$-tiling} consisting of~$n'/t-|S|=(t-1)\delta(G)-(t-2)n$ copies of~$K_t$ and a~\hbox{$K_{t-1}$-tiling} consisting of~$|S|=(t-1)n-t\delta(G)$ copies of~$K_{t-1}$, such that the two tilings are \hbox{vertex-disjoint}.
\end{proof}

At various points of our proofs, we will invoke the
following well-known fact.


\begin{fact}\label{fact:K6}
A~\hbox{$2$-edge-coloured}~$K_6$ contains two monochromatic copies of~$K_3$.   \qed  
\end{fact}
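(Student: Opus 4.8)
The plan is to establish the stronger quantitative statement that every $2$-edge-colouring of $K_6$ contains at least two (distinct) monochromatic copies of $K_3$, via a Goodman-type double count. Colour the edges red and blue, and for each vertex $v$ let $r_v$ and $b_v$ be its red and blue degrees, so that $r_v+b_v=5$. Call a path of length two \emph{bichromatic} if its two edges receive different colours; the number of bichromatic paths centred at $v$ is exactly $r_vb_v$, so the total is $\sum_v r_vb_v$. The key observation is that a triangle fails to be monochromatic precisely when its three edges are not all the same colour, and in that case it contains exactly two bichromatic paths, namely the two centred at the vertices where a red and a blue edge of the triangle meet. Hence the number of non-monochromatic triangles equals $\tfrac12\sum_v r_vb_v$. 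Since $r_v+b_v=5$ forces $r_vb_v\le 2\cdot 3=6$ for every $v$, there are at most $\tfrac12\cdot 6\cdot 6=18$ non-monochromatic triangles. As $K_6$ has $\binom{6}{3}=20$ triangles, at least $20-18=2$ of them are monochromatic, as required.

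An alternative, purely combinatorial route uses only $R(3,3)=6$. By Ramsey's theorem there is a monochromatic triangle $T$, say red on $\{a,b,c\}$; suppose for contradiction that $T$ is the unique monochromatic triangle. Then for each vertex $u\in\{a,b,c\}$, the graph $K_6-u$ contains no monochromatic triangle at all (the only one in $K_6$ was $T$, which meets $u$), so each such $K_5$ is \emph{badly coloured} and every vertex has red-degree exactly $2$ within it. Writing the colours of the five edges at any vertex $d\notin\{a,b,c\}$ and reading off these three red-degree-$2$ constraints, one finds that the edges from $d$ to $\{a,b,c\}$ must all share one colour and those to the remaining two vertices the other. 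If that common colour is red, then $d$ together with two of $\{a,b,c\}$ spans a second red triangle; since this holds for each of the three vertices outside $T$, the only way to avoid a contradiction is for all of them to send blue to $\{a,b,c\}$ and red to each other, which makes them a red triangle. Either way we obtain a monochromatic triangle distinct from $T$, a contradiction.

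The only delicate point in the first proof is the counting identity, i.e.\ that each non-monochromatic triangle contributes exactly two bichromatic paths (equivalently, a monochromatic triangle contributes none), after which the bound $r_vb_v\le 6$ is immediate; in the second proof it is merely organising the short case analysis. We emphasise that the statement cannot be strengthened to two \emph{vertex-disjoint} monochromatic triangles: attaching an apex to a badly coloured $K_5$ by five red edges yields a $2$-edge-coloured $K_6$ in which every monochromatic triangle passes through the apex.
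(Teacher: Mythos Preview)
Your proposal is correct. The paper, however, gives no proof at all: Fact~\ref{fact:K6} is stated with an immediate \qed\ as a well-known result, and the sentence following it merely remarks (as you also do) that the two monochromatic triangles need not be vertex-disjoint.

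Both of your arguments are standard and sound. The Goodman-type count is the cleanest route: the identity that each non-monochromatic triangle contributes exactly two bichromatic $2$-paths is correct, and $r_vb_v\le 6$ with $\binom{6}{3}=20$ finishes it. Your alternative argument is also valid; the step asserting that each $K_6-u$ must be badly coloured relies on the (true and well-known) fact that the only $2$-edge-colouring of $K_5$ with no monochromatic triangle is the one in which both colour classes are $5$-cycles, which you might want to state explicitly. Your closing example, an apex joined by red edges to a badly coloured $K_5$, is exactly the right witness that the two triangles in Fact~\ref{fact:K6} cannot in general be taken vertex-disjoint, matching the paper's own remark.
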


Note that in Fact~\ref{fact:K6} the two copies of~$K_3$ are not necessarily \hbox{vertex-disjoint}.
The next two lemmas assert that, for larger dense graphs, we can indeed guarantee the existence of multiple \hbox{vertex-disjoint} monochromatic copies of~$K_3$.
The first lemma is new, while the second lemma is an immediate corollary of Theorem~\ref{theorem:Moon}.

\begin{lemma}\label{lemma:K7}
A~\hbox{$2$-edge-coloured}~$K_7(2)$ contains three \hbox{vertex-disjoint} monochromatic copies of~$K_3$. 
\end{lemma}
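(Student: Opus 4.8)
The plan is to reduce the problem to locating just two well-placed monochromatic triangles and then to get the third one for free from a short Ramsey fact. Two observations drive everything. First, since the parts of $K_7(2)$ are independent sets, every triangle---in particular every monochromatic one---meets exactly three distinct parts, using one vertex from each. Second, any complete multipartite graph with at least six nonempty parts contains a copy of $K_6$ (take one vertex from each of six parts), and hence, by Fact~\ref{fact:K6}, a monochromatic copy of $K_3$.

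With these in hand I would argue as follows. Call a part \emph{alive} if it still contains a vertex; initially all seven parts are alive. Suppose I can find two vertex-disjoint monochromatic triangles $T_1,T_2$ whose triples of parts overlap in at most one part. Each $T_i$ uses one vertex from each of its three parts, so a part can be emptied by deleting $V(T_1)\cup V(T_2)$ only if \emph{both} $T_1$ and $T_2$ meet it (they then use its two distinct vertices). Thus at most one part is killed, leaving at least six alive parts among the remaining $14-6=8$ vertices; by the second observation these eight vertices contain a monochromatic copy of $K_3$, say $T_3$, and $T_1,T_2,T_3$ are the three vertex-disjoint monochromatic triangles we want. Note that an overlap of exactly one part is harmless: it corresponds to $T_2$ using two vertices from parts untouched by $T_1$ together with one vertex from a part already met by $T_1$.

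It remains to produce $T_1,T_2$ with part-overlap at most one. First I would pick any monochromatic triangle $T_1$ (it exists, since all seven parts are alive), meeting parts $A,B,C$ say; the other four parts then still have size two and induce a copy of $K_4(2)=K_{2,2,2,2}$. If this $K_{2,2,2,2}$ contains a monochromatic triangle, I take it to be $T_2$: it meets three of the four untouched parts, so $T_1$ and $T_2$ share no part and we are done.

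The hard part will be the case where the $K_{2,2,2,2}$ on the four untouched parts is monochromatic-triangle-free. Such colourings are rigid: a complete multipartite graph with no monochromatic triangle essentially arises as a blow-up of a (sub-structure of a) badly coloured $K_5$, and here one checks it must be a blow-up of a $2$-coloured $K_4$ with no monochromatic triangle, so each colour is a union of complete bipartite graphs between the parts. I would then bring back the leftover vertices of $A,B,C$: each surviving vertex of these three parts is joined completely to all eight vertices of the untouched parts, and the aim is to show its red/blue pattern forces a monochromatic triangle using it together with two of the untouched parts---i.e.\ an overlap-one triangle $T_2$. Equivalently, one wants the eight vertices \emph{not} to split into a red-independent set and a blue-independent set matching that vertex's colours. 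The only delicate point is that a given rigid configuration may admit such a split; then I would either use one of the other two leftover vertices, or re-choose $T_1$ (there is always more than one monochromatic triangle to begin with) to break the coincidence. Verifying that one of these options always succeeds---that the triangle-free configurations on four parts cannot be simultaneously ``compatible'' with all three remaining vertices---is where the bulk of the casework lies, and is the main obstacle to a fully clean write-up.
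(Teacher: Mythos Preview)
Your reduction is sound and matches the paper's core idea: two vertex-disjoint monochromatic triangles whose part-sets overlap in at most one index leave at least six live parts among the remaining eight vertices, so Fact~\ref{fact:K6} supplies the third triangle.

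The gap is exactly where you flag it: you have not resolved the case where the four parts untouched by $T_1$ carry a monochromatic-triangle-free colouring of $K_{2,2,2,2}$. Your structural claim that such a colouring must be a blow-up of a triangle-free $2$-coloured $K_4$ is not proved (and is not obvious), and even granting it, the promised casework---that some leftover vertex of $A\cup B\cup C$, or some re-choice of $T_1$, always produces an overlap-one $T_2$---is only asserted. The paper sidesteps all of this by a change of viewpoint worth adopting. Rather than fixing an arbitrary $T_1$ and then fighting for a compatible $T_2$, it first proves, in three lines, that any $2$-coloured $K_7$ contains two monochromatic triangles sharing at most one vertex (if they shared an edge $v_1v_2$, deleting $v_1$ leaves a $K_6$ with two monochromatic triangles by Fact~\ref{fact:K6}, yet only $v_2v_3v_4$ can share two vertices with each original---contradiction). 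Now write $V(K_7(2))=U\dot\cup V$ with $U=\{u_1,\dots,u_7\}$ and $V=\{v_1,\dots,v_7\}$ each inducing a $K_7$, and apply this claim inside $G[V]$: since both triangles live in $V$, ``sharing at most one vertex'' \emph{is} ``part-overlap at most one'', so your $T_1,T_2$ come for free. If they are disjoint, take the third triangle inside $G[U]$; if they share $v_3$, say $v_1v_2v_3$ and $v_3v_4v_5$, pick a monochromatic $u_iu_ju_k$ in the $K_6$ on $U\setminus\{u_3\}$, use pigeonhole to assume $1\notin\{i,j,k\}$, and note that $\{u_1,u_3,v_4,v_5,v_6,v_7\}$ induces a $K_6$ disjoint from both $v_1v_2v_3$ and $u_iu_ju_k$. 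The key idea you are missing is to exploit the two disjoint copies of $K_7$ sitting inside $K_7(2)$; this delivers the overlap-$\le 1$ pair immediately and removes any need to analyse triangle-free colourings of $K_{2,2,2,2}$.
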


\begin{lemma}[Moon~\cite{Moon}]\label{lemma:K8}
A $2$-edge-coloured $K_8$ contains two vertex-disjoint monochromatic copies of $K_3$. 
\end{lemma}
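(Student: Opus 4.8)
The plan is to obtain this as the promised immediate consequence of Theorem~\ref{theorem:Moon}. Apply that theorem with $m=2$: since $3m+2=8$ and the hypothesis $m\ge 2$ is met, every $2$-edge-colouring of $K_8$ yields a $K_3$-tiling consisting of $2$ monochromatic copies of $K_3$. By definition an $H$-tiling is a collection of vertex-disjoint copies of $H$, so the two monochromatic triangles produced are automatically vertex-disjoint, which is exactly the conclusion of Lemma~\ref{lemma:K8}. This is essentially a one-line deduction, and I do not expect any genuine obstacle along this route.

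It is nonetheless worth recording why the statement is not completely trivial without Moon's theorem, since this clarifies what work Theorem~\ref{theorem:Moon} is doing. The obvious elementary strategy would be greedy: because $R(K_3)=6\le 8$, any $2$-edge-coloured $K_8$ contains a monochromatic triangle $T_1$; one then deletes the three vertices of $T_1$ and hopes to find a second monochromatic triangle in the remaining $K_5$. The difficulty is precisely the badly coloured $K_5$ described in the notation section: the leftover $K_5$ may have each colour class forming a $C_5$, and then it contains no monochromatic triangle at all. Hence a naive deletion argument can stall.

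To push the elementary argument through one would have to choose the first triangle more carefully, or reroute a vertex. For instance, if the complement $W$ of some fixed monochromatic triangle is badly coloured, one could try pairing a vertex of the triangle with two red-adjacent (in the red $C_5$) vertices of $W$ to build an alternative triangle, and then examine whether the new five-vertex complement is still badly coloured; bounding the number of badly coloured complements that can coexist would force a contradiction after a short case analysis. The main (and only) delicate point in any such self-contained proof is controlling these badly coloured $K_5$ configurations simultaneously. Invoking Theorem~\ref{theorem:Moon} bypasses this case analysis entirely, which is why I would present the citation as the proof and relegate the elementary discussion to a remark.
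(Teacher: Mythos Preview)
Your proposal is correct and matches the paper's own treatment: the paper states that Lemma~\ref{lemma:K8} is ``an immediate corollary of Theorem~\ref{theorem:Moon}'', exactly the $m=2$ instantiation you give. The additional commentary on why a naive greedy argument via $R(K_3)=6$ fails is accurate but extraneous to the proof.
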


To apply Lemma~\ref{lemma:K7} in conjunction with Theorem~\ref{thm:HajnalSzm2}, we will
 apply Szemer\'edi's Regularity Lemma~\cite{Szemeredi} and the Blow-up Lemma~\cite{blowup};
the downside of using these techniques is that they cause the~$o(n)$ error term to appear in part~\ref{case:Moonmedium} of Theorem~\ref{theorem:alaMoon}.

In the next three subsections we prove parts~\ref{case:Moonsmall}, \ref{case:Moonlarge} and~\ref{case:Moonmedium} of Theorem~\ref{theorem:alaMoon}, respectively.
The proof of Lemma~\ref{lemma:K7} appears at the end of the section.

\subsection{Proof of Theorem~\ref{theorem:alaMoon}\ref{case:Moonsmall}}

Let~$n\in\mathbb N$ and let $G$ be a $2$-edge-coloured $n$-vertex graph with~$4n/5\le\delta(G)\le 5n/6$.
By Theorem~\ref{thm:HajnalSzm2},~$G$ contains a~$K_6$-tiling consisting of~$5\delta(G)-4n$ copies of~$K_6$.
By Fact~\ref{fact:K6}, each~$K_6$ contains a monochromatic copy of $K_3$.
It follows that~$G$ contains a~\hbox{$K_3$-tiling} consisting of~$5\delta(G)-4n$ monochromatic copies of~$K_3$, as required. \hfill Q.E.D.


\subsection{Proof of Theorem~\ref{theorem:alaMoon}\ref{case:Moonlarge}}

Recall that part~\ref{case:Moonlarge} of Theorem~\ref{theorem:alaMoon} states that any $2$-edge-coloured $n$-vertex graph $G$ with $\delta(G)\ge7n/8$ contains a $K_3$-tiling consisting of $\lfloor(2\delta(G)-n)/3\rfloor$ monochromatic copies of $K_3$. We prove this by induction on $n$.

Before this, we prove the case when $7n/8\le\delta(G)\le(7n+2)/8$ (for all $n\in \mathbb N$).
Note that any induced subgraph~$H$ of~$G$ with~$|H|=8(n-\delta(G))\le n$ satisfies~$\delta(H)\ge\delta(G)-(|G|-|H|)=7(n-\delta(G))=7|H|/8$.
Theorem~\ref{thm:HajnalSzm} implies~$H$, and thus~$G$, contains a~$K_8$-tiling consisting of~$n-\delta(G)$ copies of~$K_8$.
By Lemma~\ref{lemma:K8}, each~$K_8$ contains two \hbox{vertex-disjoint} monochromatic copies of~$K_3$.
Taking the union of all such copies yields a~\hbox{$K_3$-tiling} consisting of precisely~$2(n-\delta(G))$ monochromatic copies of~$K_3$. 
This concludes the verification of this case, as
$$\left\lfloor\frac{2\delta(G)-n}{3}\right\rfloor=2(n-\delta(G))+\left\lfloor\frac{8\delta(G)-7n}{3}\right\rfloor\le2(n-\delta(G))+\left\lfloor\frac{2}{3}\right\rfloor=2(n-\delta(G)).$$

Now we can proceed by induction on $n$. The base cases when $8 \leq n \leq 10$ are covered by the last paragraph.
Next, we check the inductive step. 
Suppose $G$ is an $n$-vertex graph where $n \geq 11$. By the previous paragraph we may assume that~$\delta(G)\ge(7n+3)/8$.
It is easy to show that~$G$ contains a $K_6$ (e.g., by Theorem~\ref{thm:HajnalSzm2}), which in turn contains a monochromatic copy~$T$ of~$K_3$ by Fact~\ref{fact:K6}.
Let~$G':=G\setminus V(T)$.
Note that~$G'$ is a~\hbox{$2$-edge-coloured} graph on~$n-3$ vertices with minimum degree~$\delta(G')\ge\delta(G)-3\ge 7(n-3)/8$.
By the inductive hypothesis,~$G'$ contains a~\hbox{$K_3$-tiling} consisting of~$\lfloor(2\delta(G')-(n-3))/3\rfloor\ge\lfloor(2\delta(G)-n)/3\rfloor-1$ monochromatic copies of~$K_3$.
Adding~$T$ to this tiling yields a~\hbox{$K_3$-tiling} in~$G$ consisting of at least~$\lfloor(2\delta(G)-n)/3\rfloor$ monochromatic copies of~$K_3$.
This concludes the inductive step and the proof.
\hfill Q.E.D.


\subsection{Proof of Theorem~\ref{theorem:alaMoon}\ref{case:Moonmedium}}
To prove this part of the theorem, it suffices to show the following: 
Let~$\eta >0$ and~$n \in \mathbb N$ be sufficiently large. 
Let~$G$ be an $n$-vertex $2$-edge-coloured graph with~$5n/6 \le\delta(G)\leq 7n/8$. 
Then~$G$ contains a~\hbox{$K_3$-tiling} consisting of at least~$(4\delta(G)-3n)/2-\eta n$ monochromatic copies of~$K_3$.

Let $G$ be as in this statement.
First, we use part~\ref{case:Moonsmall} of Theorem~\ref{theorem:alaMoon} to show that we may assume~$\delta(G)$ is bounded away from~$5n/6$.

\begin{claim}\label{claim:bound5/6}
Either~$\delta(G)\ge(5/6+\eta/4)n$ or~$G$ contains a~\hbox{$K_3$-tiling} consisting of at least~$(4\delta(G)-3n)/2-\eta n$ monochromatic copies of~$K_3$.     
\end{claim}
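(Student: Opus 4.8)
The plan is to dispose of the claim by a short reduction that applies the Hajnal--Szemer\'edi theorem (essentially the argument already used for part~\ref{case:Moonsmall}) to a suitably truncated subgraph of $G$. I would assume that the first alternative fails, so that $5n/6\le\delta(G)<(5/6+\eta/4)n$, and then construct the required $K_3$-tiling. The key idea is to delete exactly the right number of vertices so that the minimum degree of what remains lands precisely on the Hajnal--Szemer\'edi threshold for $K_6$, while keeping the order of the remaining graph divisible by $6$.

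Concretely, I would set $x:=6\delta(G)-5n\ge0$ and let $G'$ be obtained from $G$ by deleting any $x$ vertices, so that $G'$ has $n':=n-x=6(n-\delta(G))$ vertices. Then $6\mid n'$, and since deleting $x$ vertices lowers each surviving degree by at most $x$, we have
$$\delta(G')\ge\delta(G)-x=5(n-\delta(G))=\frac{5n'}{6}=\left(1-\frac16\right)n'.$$
Hence Theorem~\ref{thm:HajnalSzm} with $t=6$ yields a perfect $K_6$-tiling of $G'$ consisting of $n'/6=n-\delta(G)$ copies of $K_6$. Applying Fact~\ref{fact:K6} inside each such $2$-edge-coloured $K_6$ then produces $n-\delta(G)$ vertex-disjoint monochromatic copies of $K_3$ in $G'$, hence in $G$.

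It then remains to verify that $n-\delta(G)$ beats the target, which is a one-line computation: the difference between target and output is
$$\frac{4\delta(G)-3n}{2}-(n-\delta(G))=\frac{6\delta(G)-5n}{2}=\frac{x}{2},$$
and the assumption $\delta(G)<(5/6+\eta/4)n$ gives $x<\tfrac32\eta n<2\eta n$, so that $x/2<\eta n$ and the tiling of size $n-\delta(G)$ indeed contains at least $(4\delta(G)-3n)/2-\eta n$ monochromatic triangles. I do not expect a genuine obstacle here: the claim is a reduction step whose only subtlety is choosing the deletion count $x=6\delta(G)-5n$ so that the threshold $\delta(G')\ge 5n'/6$ and the divisibility $6\mid n'$ hold simultaneously, with everything else being routine arithmetic. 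One could instead try to feed $G'$ directly into part~\ref{case:Moonsmall}, but invoking Theorem~\ref{thm:HajnalSzm} for a \emph{perfect} $K_6$-tiling is cleaner, as it avoids having to control an upper bound on $\delta(G')$.
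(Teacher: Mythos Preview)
Your proof is correct and follows essentially the same approach as the paper: when $\delta(G)$ is close to $5n/6$, reduce to the Hajnal--Szemer\'edi $K_6$-tiling argument plus Fact~\ref{fact:K6}, and check that the resulting $\approx n/6$ monochromatic triangles beat the target. The only cosmetic difference is that the paper deletes \emph{edges} (passing to a spanning subgraph $F$ with $\delta(F)=\lfloor 5n/6\rfloor$ and then invoking the already-proved part~\ref{case:Moonsmall}), whereas you delete \emph{vertices} to land exactly on the Hajnal--Szemer\'edi threshold; both routes produce the same tiling.
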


\begin{proof}
Suppose $5n/6 \leq \delta(G)<(5/6+\eta/4)n$.
Then there exists a spanning subgraph~$F$ of~$G$ with~$\delta(F)=\lfloor5n/6\rfloor=n-\lceil n/6\rceil$.
We can therefore apply part~\ref{case:Moonsmall} of Theorem~\ref{theorem:alaMoon} to~$F$.
Thus,~$F$ (and so $G$) contains~$n-5\lceil n/6\rceil\ge n/6-5$ \hbox{vertex-disjoint} monochromatic copies of~$K_3$.
As $\delta(G)<(5/6+\eta/4)n$, we have that 
$(4\delta(G)-3n)/2-\eta n<n/6-5$.
Then  indeed
 $G$ contains a~\hbox{$K_3$-tiling} consisting of at least~$(4\delta(G)-3n)/2-\eta n$ monochromatic copies of~$K_3$, as desired.
\end{proof}

By Claim~\ref{claim:bound5/6}, we may assume~$\delta(G)\ge(5/6+\eta/4)n$.
As  mentioned at the beginning of this section, we now employ Szemer\'edi's Regularity Lemma~\cite{Szemeredi} and the Blow-up Lemma~\cite{blowup}.
Essentially, the former provides an auxiliary graph~$R$ (which is commonly referred to as the {\it reduced graph}) which approximates~$G$ in the following sense: the vertex set of~$G$ can be partitioned into vertex classes~$\{V_v:v\in V(R)\}$ and a small exceptional set $V_0$ such that if~$xy\in E(R)$, then the edges between~$V_x$ and~$V_y$ are evenly distributed.
One can then argue that, using the Blow-up Lemma, for our purposes such evenly distributed edges behave essentially as a complete bipartite graph.
In particular, given a collection of \hbox{vertex-disjoint} cliques in~$R$, one can find \hbox{vertex-disjoint} blow-ups of cliques in the original graph.
Our strategy then is to find an appropriate collection of \hbox{vertex-disjoint} copies of~$K_6$,~$K_7$ and~$K_8$ in the reduced graph, using Theorem~\ref{thm:HajnalSzm2}.
This yields a collection of \hbox{vertex-disjoint} copies of~$K_6(2)$,~$K_7(2)$ and~$K_8(2)$ in the original graph $G$.
We then apply Fact~\ref{fact:K6}, Lemma~\ref{lemma:K8} and, crucially, Lemma~\ref{lemma:K7}.

The next result formalises the ``embedding step" from the reduced graph to the original graph~$G$ described above.
For simplicity of exposition, we avoid introducing the standard notation used for the Regularity Lemma and instead keep the technicalities to a minimum.
We note that the proof of this result is standard; the proof of the statement can be found in the
appendix.

\begin{lemma}[Embedding step]\label{theorem:embedding}
For every~$\eta>0$ there exists~$n_0=n_0(\eta)\in\mathbb{N}$ such that for every graph~$G$ on~$n\geq n_0$ vertices the following holds.
There exist~$m,\ell\in\mathbb{N}$, a partition~$V_0, V_1,\dots, V_\ell$ of~$V(G)$ and a graph~$R$ with vertex set~$\{V_i:i\ge1\}$ such that the following properties  hold:
\begin{enumerate}[label=(\roman*)]
\item~$\delta(G)/n-\eta/4\le\delta(R)/|R|\le\delta(G)/n$; \label{property:i}
\item~$|V_i| = m$ for every~$i\ge1$ and~$|V_0|\leq\eta n/2$; \label{property:ii}
\item If the vertices~$\{V_{i_1},\dots,V_{i_r}\}$ in~$R$ form a clique and~$r\le 8$, then~$G[V_{i_1},\dots,V_{i_r}]$ contains a~$K_r(2)$-tiling consisting of at least~$(1-\eta/2)m/2$ copies of~$K_r(2)$. \label{property:iii}
\end{enumerate}
\end{lemma}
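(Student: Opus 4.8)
The plan is to prove the Embedding Lemma by a direct application of Szemer\'edi's Regularity Lemma followed by a cleaning step and the Blow-up Lemma. First I would fix a regularity parameter $\eps$ small relative to $\eta$ (say $\eps \ll \eta$, and also $\eps \ll d$ for a density threshold $d$ to be chosen with $d \ll \eta$), and apply the (multicolour / two-coloured) version of the Regularity Lemma to $G$ with respect to its two edge-colours, obtaining an $\eps$-regular partition $V_0, V_1, \dots, V_\ell$ with $|V_0| \le \eps n$ and $|V_i| = m$ for all $i \ge 1$, where $\ell$ is bounded in terms of $\eps$. Ensuring $\eps \le \eta/2$ immediately gives $|V_0| \le \eta n / 2$, which is half of property~\ref{property:ii}.

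Next I would define the reduced graph $R$ on vertex set $\{V_i : i \ge 1\}$ in the standard way: put an edge $V_iV_j$ in $R$ exactly when the pair $(V_i, V_j)$ is $\eps$-regular and has density at least $d$ in at least one of the two colours; I would then give that edge the majority colour, turning $R$ into a $2$-edge-coloured graph. The main technical point of this step is the minimum-degree transfer, property~\ref{property:i}. The standard counting estimate says that almost every vertex $v$ of a typical cluster $V_i$ loses only $O(\eps + d)n$ of its neighbours when we pass from $G$-degrees to $R$-degrees: neighbours in $V_0$, neighbours across irregular pairs, and neighbours across sparse pairs are all negligible. A short averaging argument over the clusters then yields $\delta(R)/|R| \ge \delta(G)/n - \eta/4$, while $\delta(R)/|R| \le \delta(G)/n$ holds automatically since no cluster can gain degree in the reduced graph beyond what the densities permit (more carefully, the upper bound follows because each reduced edge corresponds to a dense pair, so the reduced degree cannot exceed the normalised $G$-degree). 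This is where I would be most careful, since getting the inequalities in the precise stated form $\delta(G)/n - \eta/4 \le \delta(R)/|R| \le \delta(G)/n$ requires choosing the constants so the error from $V_0$, irregular pairs and sparse pairs is safely below $\eta/4$.

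For property~\ref{property:iii}, suppose $\{V_{i_1}, \dots, V_{i_r}\}$ forms a clique in $R$ with $r \le 8$. Each pair among these clusters is $\eps$-regular with density at least $d$ in its assigned colour, so by the Slicing Lemma (or a direct application of the Blow-up Lemma to the reduced $r$-partite setup) these $r$ clusters contain many vertex-disjoint monochromatically-patterned copies of $K_r(2)$, where ``monochromatically-patterned'' means each of the $\binom{r}{2}$ bipartite pairs has all its edges in the colour prescribed by $R$. Concretely, one repeatedly embeds $K_r(2)$ greedily (removing $2r$ vertices per copy and noting that regularity is inherited by the shrinking subclusters until a $(1-\eta/2)$-fraction has been used), or one invokes the Blow-up Lemma once to find an almost-perfect $K_r(2)$-tiling inside the $r$-partite cluster graph. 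Either route yields at least $(1-\eta/2)m/2$ vertex-disjoint copies of $K_r(2)$ inside $G[V_{i_1}, \dots, V_{i_r}]$, giving property~\ref{property:iii}; the factor $1/2$ is exactly because each $K_r(2)$ consumes $2$ vertices per cluster. I expect the main obstacle to be purely bookkeeping: threading a single chain of inequalities $\eps \ll d \ll \eta$ through all three properties simultaneously so that the one common choice of $\eps, d$ (and hence of $n_0 = n_0(\eta)$) makes the $V_0$-loss, the irregular-pair loss, the sparse-pair loss, and the Blow-up Lemma wastage all fit under the single budget $\eta$, with no circular dependence between the constants.
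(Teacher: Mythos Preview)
Your overall approach—Regularity Lemma, reduced graph, Blow-up Lemma—matches the paper's proof, but there are two points to correct.

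First, the lemma is stated for an \emph{uncoloured} graph $G$, and property~\ref{property:iii} asks only for a $K_r(2)$-tiling in $G[V_{i_1},\dots,V_{i_r}]$, with no colour requirement. You build colours into $R$ and into the embedded copies of $K_r(2)$; this is unnecessary overhead. The colours are dealt with only \emph{after} the tiling is found, inside each $K_r(2)$, via Fact~\ref{fact:K6} and Lemmas~\ref{lemma:K7} and~\ref{lemma:K8}. The paper simply applies the uncoloured degree form of the Regularity Lemma to $G$.

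Second, and more substantively, your claim that $\delta(R)/|R|\le\delta(G)/n$ ``holds automatically'' is wrong. A reduced edge records only that a pair of clusters is dense on average; a single low-degree vertex buried in some cluster $V_i$ can make $\delta(G)/n$ arbitrarily small without preventing $V_i$ from being dense to every other cluster (take $G$ to be a clique with almost all edges at one vertex removed). So the reduced degree can genuinely exceed the normalised $G$-degree. The paper handles this by \emph{greedily deleting edges from $R$} until $\delta(R)/|R|\le\delta(G)/n$; since property~\ref{property:iii} is monotone under deletion of edges from $R$, this is harmless. Your argument needs this step.

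A minor omission: before invoking the Blow-up Lemma you must pass from $\eps$-regular pairs to super-regular pairs by deleting at most an $O(r\eps)$-fraction of each $V_{i_j}$; the Blow-up Lemma requires super-regularity. The paper does this explicitly. Your alternative greedy-embedding route would avoid the issue, but the Blow-up route as you describe it is incomplete without it.
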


Apply Lemma~\ref{theorem:embedding} to the graph~$G$ to obtain~$m,\ell\in\mathbb N$, a partition~$V_0,V_1,\dots,V_\ell$ of~$V(G)$ and a graph~$R$ satisfying properties~\ref{property:i}--\ref{property:iii} of Lemma~\ref{theorem:embedding}.

\begin{claim}\label{claim:caseb}
There exists a~\hbox{$K_3$-tiling} in~$G$ consisting of at least~$(4\delta(R)-3|R|)\cdot\frac{(1-\eta/2)m}{2}$ monochromatic copies of~$K_3$.    
\end{claim}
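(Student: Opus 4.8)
The plan is to decompose the reduced graph $R$ into cliques via Theorem~\ref{thm:HajnalSzm2}, blow each clique up into the corresponding $K_r(2)$ inside $G$ using property~\ref{property:iii}, and then extract vertex-disjoint monochromatic triangles from each $K_r(2)$ using Fact~\ref{fact:K6}, Lemma~\ref{lemma:K7} and Lemma~\ref{lemma:K8}. Write $d:=\delta(R)$, $N:=|R|$ and $M:=(1-\eta/2)m/2$. Since we have reduced to the regime $\delta(G)\ge(5/6+\eta/4)n$ and we are assuming $\delta(G)\le 7n/8$, property~\ref{property:i} yields $5N/6\le d\le 7N/8$.

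The first step I would isolate is the observation that a $2$-edge-coloured $K_r(2)$ contains at least $r-4$ vertex-disjoint monochromatic copies of $K_3$, for $r\in\{6,7,8\}$. For $r=7$ this is precisely Lemma~\ref{lemma:K7}. For $r\in\{6,8\}$, note that $K_r(2)$ contains two vertex-disjoint copies of $K_r$: take one vertex from each of the $r$ classes to form the first copy, and the $r$ remaining vertices to form the second. Applying Fact~\ref{fact:K6} when $r=6$ and Lemma~\ref{lemma:K8} when $r=8$ to each of these two copies then produces, in total, at least $2$ and at least $4$ vertex-disjoint monochromatic triangles, respectively.

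Next I would split according to the value of $d/N$. If $5N/6\le d\le 6N/7$, I apply Theorem~\ref{thm:HajnalSzm2} to $R$ with $t=7$, obtaining $6d-5N$ vertex-disjoint copies of $K_7$ and $6N-7d$ vertex-disjoint copies of $K_6$ in $R$; if $6N/7\le d\le 7N/8$, I apply it with $t=8$, obtaining $7d-6N$ copies of $K_8$ and $7N-8d$ copies of $K_7$. In either case property~\ref{property:iii} converts each $K_r$-clique of $R$ into at least $M$ vertex-disjoint copies of $K_r(2)$ in $G$, and since distinct cliques occupy disjoint vertex classes, all these copies are mutually vertex-disjoint. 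Feeding each $K_r(2)$ through the first step and summing, the first regime yields at least $M\bigl(3(6d-5N)+2(6N-7d)\bigr)$ and the second at least $M\bigl(4(7d-6N)+3(7N-8d)\bigr)$ vertex-disjoint monochromatic triangles. Both expressions simplify to $M(4d-3N)=(4\delta(R)-3|R|)\cdot\frac{(1-\eta/2)m}{2}$, which is exactly the bound claimed.

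The arithmetic is the heart of the matter: the ``$r-4$'' triangle count per $K_r(2)$ and the Hajnal--Szemer\'edi split conspire so that, for general $t$, the coefficients of $d$ and of $N$ in the final total come out to $4$ and $-3$ independently of the chosen $t$, which is why the two regimes telescope to the same value. I do not expect a genuine obstacle here beyond careful bookkeeping; the only points requiring attention are checking that the range $5N/6\le d\le 7N/8$ furnished by property~\ref{property:i} lies within the hypotheses of Theorem~\ref{thm:HajnalSzm2} for the relevant $t\in\{7,8\}$ (and that the resulting clique counts are nonnegative), and recognising that the indispensable middle contribution $3$ per $K_7(2)$ is supplied by Lemma~\ref{lemma:K7}, since a single embedded $K_7$ alone would be too weak to reach the target.
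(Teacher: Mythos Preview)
Your proposal is correct and follows essentially the same argument as the paper: split the range $5|R|/6\le\delta(R)\le 7|R|/8$ at $6|R|/7$, apply Theorem~\ref{thm:HajnalSzm2} with $t=7$ or $t=8$ to decompose $R$ into cliques, transfer to $K_r(2)$-tilings in $G$ via property~\ref{property:iii}, and extract $r-4$ vertex-disjoint monochromatic triangles from each $K_r(2)$ using Fact~\ref{fact:K6}, Lemma~\ref{lemma:K7} and Lemma~\ref{lemma:K8}. The only cosmetic difference is that you isolate the ``$r-4$ triangles in $K_r(2)$'' step explicitly by splitting $K_6(2)$ and $K_8(2)$ into two disjoint transversal copies of $K_6$ and $K_8$, whereas the paper leaves this implicit.
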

\begin{proof}
By property~\ref{property:i} and the fact that~$(5/6+\eta/4)n\le\delta(G)\le 7n/8$, it follows that
$$5|R|/6\le\delta(R)\le 7|R|/8.$$  

Suppose first that~$\delta(R)\le 6|R|/7$.
Then by Theorem~\ref{thm:HajnalSzm2},~$R$ contains a~$K_7$-tiling consisting of~$6\delta(R)-5|R|$ copies of~$K_7$ and a~$K_6$-tiling consisting of~$6|R|-7\delta(R)$ copies of~$K_6$ such that the two tilings are \hbox{vertex-disjoint}.
By property~\ref{property:iii},~$G$ contains a~$K_7(2)$-tiling consisting of at least~$(6\delta(R)-5|R|)\frac{(1-\eta/2)m}{2}$ copies of~$K_7(2)$ and a~$K_6(2)$-tiling consisting of at least~$(6|R|-7\delta(R))\frac{(1-\eta/2)m}{2}$ copies of~$K_6(2)$ such that the two tilings are \hbox{vertex-disjoint}.
Fact~\ref{fact:K6} implies that every copy of~$K_6(2)$ contains two \hbox{vertex-disjoint} monochromatic copies of~$K_3$, whereas Lemma~\ref{lemma:K7} implies every copy of~$K_7(2)$ contains three \hbox{vertex-disjoint} monochromatic copies of~$K_3$.
It follows that~$G$ contains a~\hbox{$K_3$-tiling} consisting of at least
$$\left(3\cdot(6\delta(R)-5|R|)+2\cdot(6|R|-7\delta(R))\right)\cdot\frac{(1-\eta/2)m}{2}=(4\delta(R)-3|R|)\cdot\frac{(1-\eta/2)m}{2}$$
monochromatic copies of~$K_3$, as required.

The case~$\delta(R) \geq 6|R|/7$ is very similar.
By Theorem~\ref{thm:HajnalSzm2},~$R$ contains a~$K_8$-tiling consisting of~$7\delta(R)-6|R|$ copies of~$K_8$ and a~$K_7$-tiling consisting of~$7|R|-8\delta(R)$ copies of~$K_7$ such that the two tilings are \hbox{vertex-disjoint}.
It follows that~$G$ contains a~$K_8(2)$-tiling consisting of at least~$(7\delta(R)-6|R|)\frac{(1-\eta/2)m}{2}$ copies of~$K_8(2)$ and a~$K_7(2)$-tiling consisting of at least~$(7|R|-8\delta(R))\frac{(1-\eta/2)m}{2}$ copies of~$K_7(2)$ such that the two tilings are \hbox{vertex-disjoint}.
Lemma~\ref{lemma:K8} implies every copy of~$K_8(2)$ contains four \hbox{vertex-disjoint} monochromatic copies of~$K_3$, whereas Lemma~\ref{lemma:K7} implies every copy of~$K_7(2)$ contains three \hbox{vertex-disjoint} monochromatic copies of~$K_3$.
It follows that~$G$ contains a~\hbox{$K_3$-tiling} consisting of at least
\begin{align*} 
\left(3\cdot(7|R|-8\delta(R))+4\cdot(7\delta(R)-6|R|)\right)\cdot\frac{(1-\eta/2)m}{2}=(4\delta(R)-3|R|)\cdot\frac{(1-\eta/2)m}{2}
\end{align*}
monochromatic copies of~$K_3$.
\end{proof}

Note that~$m=(n-|V_0|)/|R|$ and thus~$m\ge(1-\eta/2)n/|R|$ by property~\ref{property:ii}.
Furthermore,~$\delta(R)/|R|\ge\delta(G)/n-\eta/4$ by property~\ref{property:i}.
Using these inequalities, we obtain
\begin{align*} 
& (4\delta(R)-3|R|)\cdot\frac{(1-\eta/2)m}{2}
\ge(4\delta(R)-3|R|)\cdot\frac{(1-\eta/2)^2n}{2|R|} 
= (1-\eta/2)^2\cdot(4\delta(R)/|R|-3)\frac{n}{2} \\
 \ge\,\, & (1-\eta/2)^2\cdot\frac{(4\delta(G)-\eta n-3n)}{2}
\ge \frac{4\delta(G)-3n}{2}-\eta n.
\end{align*}

Therefore, by Claim~\ref{claim:caseb},~$G$ contains a~\hbox{$K_3$-tiling} consisting of at least~$(4\delta(G)-3n)/2-\eta n$ monochromatic $K_3$.
This concludes the proof of case~\ref{case:Moonmedium}. \hfill Q.E.D.

\subsection{Proof of Lemma~\ref{lemma:K7}}

We start with the following claim.

\begin{claim}\label{claim:K7}
A~\hbox{$2$-edge-coloured}~$K_7$ contains two monochromatic~$K_3$ sharing at most one vertex.    
\end{claim}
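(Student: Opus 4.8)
The plan is to prove Claim~\ref{claim:K7}: every $2$-edge-coloured $K_7$ contains two monochromatic copies of $K_3$ that share at most one vertex. First I would invoke the classical Ramsey bound $R(3,3)=6$, which guarantees that any $2$-edge-coloured $K_6$ (and hence $K_7$) already contains at least one monochromatic triangle $T$. Say $T$ is red with vertices $\{a,b,c\}$. The idea is to use the remaining four vertices, together with edges back to $T$, to produce a second monochromatic triangle meeting $T$ in at most one vertex.

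Let $W=\{w_1,w_2,w_3,w_4\}=V(K_7)\setminus V(T)$. The natural first step is to examine the $2$-edge-coloured $K_4$ induced on $W$. If $G[W]$ contains a monochromatic triangle, then this triangle is vertex-disjoint from $T$ and we are done immediately. So I would assume $G[W]$ has no monochromatic triangle. A $2$-edge-colouring of $K_4$ with no monochromatic triangle has a very rigid structure: up to swapping colours, the edges of one colour form a path or a matching, and a short case check shows every such colouring contains both a red edge and a blue edge inside $W$. In fact $K_4$ has $6$ edges, and avoiding a monochromatic triangle forces each colour class to be triangle-free on $4$ vertices, so each colour class has at most $4$ edges; thus both colours appear on $W$.

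The key step is then to build a second triangle using one vertex of $T$ and two vertices of $W$. Pick a red edge $w_iw_j$ inside $W$. I would look at the edges from $w_i$ and $w_j$ to the red triangle $T$: if some vertex $v\in\{a,b,c\}$ sends red edges to both $w_i$ and $w_j$, then $\{v,w_i,w_j\}$ is a red triangle sharing exactly the one vertex $v$ with $T$, and we are done. Otherwise, for every $v\in V(T)$, at least one of $vw_i,vw_j$ is blue; this gives a way to find blue structure. Symmetrically, starting from a blue edge $w_kw_\ell$ in $W$, if some $v\in V(T)$ sends blue edges to both endpoints we again win. The main obstacle is ruling out the configuration where these ``both-same-colour'' joins never occur: one must argue that the constraints ``each $v\in V(T)$ avoids a red join to the red edge's endpoints'' and ``each $v\in V(T)$ avoids a blue join to the blue edge's endpoints,'' imposed across all three vertices of $T$ simultaneously, are contradictory by counting the $12$ edges between $T$ and $W$ (or between $T$ and the four relevant endpoints).

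I expect the heart of the proof to be this final counting/pigeonhole argument over the $T$–$W$ edges. Concretely, with $G[W]$ triangle-free we have located both a red edge and a blue edge in $W$; the plan is to show that some vertex of $T$ must send two edges of the \emph{same} colour to the endpoints of the edge of \emph{that same} colour. Since each $v\in V(T)$ has four edges to $W$, a majority colour appears among them, and by distributing the three vertices of $T$ against the two available same-colour edges in $W$ via pigeonhole, one forces the desired monochromatic triangle sharing at most one vertex with $T$. The bookkeeping is routine once the structure of $G[W]$ is pinned down, so the only genuinely delicate point is verifying that no exceptional colouring of the $T$–$W$ bipartite edges simultaneously blocks every candidate second triangle; this I would settle by a finite case analysis on the at most a handful of non-isomorphic triangle-free colourings of $K_4$ on $W$.
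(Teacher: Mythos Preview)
Your strategy has a genuine gap: you fix a monochromatic triangle $T$ and then try to find a \emph{second} monochromatic triangle meeting $T$ in at most one vertex. This can fail. Consider $V(K_7)=\{a,b,c,1,2,3,4\}$ with $T=\{a,b,c\}$ red, and on $W=\{1,2,3,4\}$ colour $12,23,34$ red and $13,14,24$ blue (so $G[W]$ has no monochromatic triangle). Now colour every edge from $\{a,b,c\}$ to $\{1,4\}$ red and every edge from $\{a,b,c\}$ to $\{2,3\}$ blue. A direct check shows that the monochromatic triangles are exactly $abc$ and the six red triangles $xy1,\,xy4$ for $\{x,y\}\subseteq\{a,b,c\}$; every one of them contains at least two vertices of $T$. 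In particular, for every red edge $w_iw_j\in\{12,23,34\}$ no $v\in T$ sends red to both endpoints, and for every blue edge $w_kw_\ell\in\{13,14,24\}$ no $v\in T$ sends blue to both endpoints, so steps 5--7 of your plan simply do not terminate in a contradiction. The claim still holds here (e.g.\ $ab1$ and $ac4$ share only $a$), but neither of these is $T$, so the pair you need cannot be found by your procedure.

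The paper sidesteps this by arguing by contradiction globally rather than anchoring at $T$: assume every two monochromatic triangles share an edge, use Fact~\ref{fact:K6} to find two monochromatic triangles $T_1,T_2$ in $K_7$, note they must share an edge $v_1v_2$, and then observe that in the $K_6$ obtained by deleting $v_1$ there is at most one triangle (namely $v_2v_3v_4$) sharing an edge with both $T_1$ and $T_2$, contradicting Fact~\ref{fact:K6}. To repair your approach you would have to allow replacing $T$ by a different monochromatic triangle once the $T$--$W$ analysis stalls; at that point the argument essentially becomes the paper's.
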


\begin{proof}
Suppose the statement of the claim does not hold.
Let~$G$ be a~\hbox{$2$-edge-coloured} complete graph with $V(G)=\{v_1,\dots,v_7\}$ such that every pair of monochromatic copies of~$K_3$ share two vertices. 
By Fact~\ref{fact:K6}, there are two monochromatic copies of~$K_3$ in~$G$,
without loss of generality~$T_1=v_1v_2v_3$ and~$T_2=v_1v_2v_4$.
Note that the only  copy of~$K_3$ in~$G\setminus\{v_1\}$ that shares two vertices with both~$T_1$ and~$T_2$ is~$v_2v_3v_4$.
In particular,~$G\setminus\{v_1\}$ contains at most one monochromatic~$K_3$.
Since~$G\setminus\{v_1\}$ is a copy of~$K_6$, this contradicts 
Fact~\ref{fact:K6}.
\end{proof}

Let~$G$ be a~\hbox{$2$-edge-coloured}~$K_7(2)$ with vertex set~$U\dot\cup V$ where~$U=\{u_1,\dots,u_7\}$,~$V=\{v_1,\dots,v_7\}$ and the non-edges of~$G$ are precisely the pairs~$u_iv_i$ for~$i\in [7]$.
By Claim~\ref{claim:K7}, there are two monochromatic copies of~$K_3$ in~$G[V]$ sharing at most one vertex.
If they are vertex-disjoint we are done, thus we may assume that~$v_1v_2v_3$ and~$v_3v_4v_5$ are monochromatic copies of~$K_3$.
By Fact~\ref{fact:K6}, the graph~$G[U\setminus\{u_3\}]$ contains a monochromatic copy of~$K_3$, say~$u_iu_ju_k$.
By the pigeonhole principle, we have either~$|\{1,2\}\cap\{i,j,k\}|\le1$ or~$|\{4,5\}\cap\{i,j,k\}|\le1$.
Without loss of generality we may  assume~$|\{1,2\}\cap\{i,j,k\}|\le1$ and in particular~$1\notin\{i,j,k\}$.
Let~$S:=\{u_1,u_3,v_4,v_5,v_6,v_7\}$.
Observe that~$S$ is \hbox{vertex-disjoint} to~$v_1v_2v_3$ and~$u_iu_ju_k$.
Furthermore,~$G[S]$ is a copy of~$K_6$ and thus it contains a monochromatic copy~$T$ of~$K_3$ by Fact~\ref{fact:K6}.
Note that~$T$,~$v_1v_2v_3$ and~$u_iu_ju_k$ are three \hbox{vertex-disjoint} monochromatic copies of~$K_3$, as required.\hfill Q.E.D.

\section{Proof of Theorem~\ref{theorem:alaBES}}


Let $n\in\mathbb N$ and $G$ be a $2$-edge-coloured $n$-vertex graph.
If $4n/5\le\delta(G)\le 5n/6$ then by part~\ref{case:Moonsmall} of Theorem~\ref{theorem:alaMoon} there is a~\hbox{$K_3$-tiling} in $G$ consisting of at least~$5\delta(G)-4n$ monochromatic $K_3$.
At least~$m:=\left\lceil (5\delta(G)-4n)/2\right\rceil$ of these triangles receive the same colour, and thus they form a monochromatic copy of~$mK_3$. 
This verifies part~\ref{case:BESsmall} of Theorem~\ref{theorem:alaBES}.

For part~\ref{case:BESlarge}, a  different approach is needed.
We use the following definition which was introduced in~\cite{BurrES} for the proof of Theorem~\ref{theorem:BES}.

\begin{define}\label{define:bowtie}
A {\it bowtie} consists of two monochromatic copies of $K_3$ of different colours which share exactly one vertex. 
\end{define}

A useful fact, observed in~\cite{BurrES}, is that if a complete graph contains two vertex-disjoint monochromatic copies of $K_3$ of different colours then it must contain a bowtie.
The following lemma is a strengthening of this statement.

\begin{lemma}\label{lemma:bowtie1}
Suppose a $2$-edge-coloured $K_6$ contains two vertex-disjoint monochromatic copies of $K_3$ of different colours. Then
for every vertex $v\in V(K_6)$, there exists a bowtie containing $v$.
\end{lemma}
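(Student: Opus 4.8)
Let $G$ be a $2$-edge-coloured $K_6$ containing two vertex-disjoint monochromatic copies of $K_3$ of different colours; say $T_R = x_1x_2x_3$ is red and $T_B = y_1y_2y_3$ is blue, and these six vertices exhaust $V(K_6)$. Fix a vertex $v \in V(K_6)$; by symmetry of the roles of the two colours, assume $v \in V(T_R)$, say $v = x_1$. The plan is to produce a bowtie whose shared vertex is $v$. Since $T_R$ is already a red triangle through $v$, it suffices to find a blue triangle through $v$ that is vertex-disjoint from $T_R\setminus\{v\}$, i.e. a blue triangle using $v$ and two vertices of $\{x_2,x_3,y_1,y_2,y_3\}$ that meets $T_R$ only in $v$; equivalently a blue triangle on $v$ together with two of $y_1,y_2,y_3$.

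\textbf{Main case analysis on the edges from $v$ to $T_B$.} Consider the three edges $vy_1, vy_2, vy_3$. If at least two of them, say $vy_i$ and $vy_j$, are blue, then since $y_iy_j$ is blue (as $T_B$ is blue) the triangle $vy_iy_j$ is a blue triangle sharing only the vertex $v$ with the red triangle $T_R$, giving the desired bowtie immediately. So the remaining case is that at most one of $vy_1,vy_2,vy_3$ is blue, i.e. at least two of these edges are red; say $vy_1$ and $vy_2$ are red. The hard part of the proof is handling this case, since now the obvious blue triangle through $v$ is unavailable and we must instead build a red triangle through $v$ paired with a disjoint blue triangle, or reconfigure the triangles.

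\textbf{Resolving the hard case.} With $vy_1, vy_2$ red, examine the edge $y_1y_2$: it is blue. Consider the four vertices $y_1,y_2,x_2,x_3$ together with the edges among them. I would look at the colours of $x_2y_1, x_2y_2, x_3y_1, x_3y_2$ and use a short pigeonhole/case argument to extract either a blue triangle avoiding $v$ (which, combined with a red triangle through $v$ built from $v,y_1,y_2$ if $y_1y_2$ were red — it is not, so one pivots to using $vy_1y_2$-type red triangles via $x_2,x_3$) or directly a bowtie at $v$. Concretely, since $vy_1,vy_2$ are red, if either $y_1y_2x_2$ or any triangle on $v$ with two red neighbours closes red we obtain a red triangle $R'$ through $v$ disjoint from a blue triangle among the remaining vertices; otherwise the forced blue edges assemble a blue triangle through $v$. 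In every subcase one of the two monochromatic triangles can be routed through $v$ while the other stays vertex-disjoint, yielding a bowtie at $v$.

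\textbf{Expected obstacle.} The genuine content is entirely in the second case (at most one blue edge from $v$ to $T_B$): one must verify by a finite but slightly fiddly case check on the six edges between $\{x_2,x_3\}$ and $\{y_1,y_2\}$ (plus the edge $x_2x_3$) that a bowtie at $v$ always exists. Because $K_6$ on two monochromatic triangles of opposite colours is highly constrained, Fact~\ref{fact:K6} (and the fact that any two vertex-disjoint opposite-coloured triangles force a bowtie) can be reapplied to suitable $4$- and $5$-vertex induced subgraphs to shortcut the casework; I expect the cleanest route to be to apply Fact~\ref{fact:K6} to the $K_6$ after relabelling so that $v$ plays a symmetric role, forcing a second monochromatic triangle that shares a vertex with $T_R$ precisely at $v$.
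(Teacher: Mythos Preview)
Your setup contains a genuine misreading of the lemma. The statement asks for a bowtie \emph{containing} $v$ --- i.e.\ $v$ is one of its five vertices --- not a bowtie whose shared (centre) vertex is $v$. You explicitly aim for the latter (``The plan is to produce a bowtie whose shared vertex is $v$''), and this stronger goal is in general false. For example, take $T_R = x_1x_2x_3$ red, $T_B = y_1y_2y_3$ blue, and colour all nine cross edges $x_iy_j$ red. Then every edge incident to $x_1$ is red, so no blue triangle passes through $x_1$, and $x_1$ cannot be the centre of any bowtie. (There \emph{is} a bowtie containing $x_1$: the red triangle $x_1x_2y_1$ and the blue triangle $y_1y_2y_3$ share $y_1$.) This is precisely the configuration your ``hard case'' is trying to handle, so the case analysis you sketch cannot terminate in a bowtie centred at $v$.

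Once you drop the insistence that $v$ be the centre, the argument is one line shorter than what you attempt. After your first case shows that $v=x_1$ has at most one blue edge to $T_B$, apply the identical observation to $x_2$: it too has at most one blue edge to $T_B$ (else a bowtie centred at $x_2$ exists --- but then $x_1 \in T_R$ lies in this bowtie, and we are done). Now $x_1$ and $x_2$ together have at most two blue edges into $\{y_1,y_2,y_3\}$, so some $y_i$ has both $x_1y_i$ and $x_2y_i$ red. Then $x_1x_2y_i$ is a red triangle meeting the blue triangle $T_B$ in exactly $y_i$; this is a bowtie, and it contains $v=x_1$. No further casework on the edges among $\{x_2,x_3,y_1,y_2\}$ is needed.

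A secondary issue: in your hard case you write of obtaining ``a red triangle $R'$ through $v$ disjoint from a blue triangle'', which is not a bowtie at all; a bowtie requires the two triangles to share exactly one vertex.
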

\begin{proof}
Without loss of generality, we may assume $V(K_6)=[6]$, $123$ is a red $K_3$ and $456$ is a blue $K_3$.
By symmetry, it suffices to prove the statement of the lemma for $v=1$.

If $1$ is incident to two blue edges, $14$ and $15$ say, then the copies  $123$ and $145$ of $K_3$ form a bowtie containing $1$.
Thus, $1$ is incident to at most one blue edge.
Similarly, $2$ is incident to at most one blue edge.
It follows that for some $i\in\{4,5,6\}$ the edges $1i$ and $2i$ are red.
Then the copies $12i$ and $456$ of $K_3$ form a bowtie containing $1$.
\end{proof}

Using Lemma~\ref{lemma:bowtie1}, we obtain the following.

\begin{lemma}\label{lemma:bowtie2}
Suppose a $2$-edge-coloured $K_7$ contains a bowtie.
Then there exists another bowtie on a different vertex set.  
\end{lemma}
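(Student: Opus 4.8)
The plan is to use Lemma~\ref{lemma:bowtie1} as a black box to manufacture a second bowtie. Fix the given bowtie, with centre $c$, red triangle $ca_1a_2$ and blue triangle $cb_1b_2$, write $W:=\{c,a_1,a_2,b_1,b_2\}$ for its vertex set, and let $x,y$ be the two remaining vertices of the $K_7$. The crucial observation is that it suffices to locate a copy of $K_6$ inside our $K_7$ which (a) contains at least one of the spare vertices $x,y$, and (b) contains two vertex-disjoint monochromatic triangles of different colours. Indeed, Lemma~\ref{lemma:bowtie1} would then supply a bowtie through that spare vertex, and any bowtie meeting $\{x,y\}$ automatically has vertex set different from $W$. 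A bowtie living entirely in $H:=K_7\setminus\{c\}$ also does the job directly, since its vertex set omits $c$.

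First I would examine $H$, which is a $2$-edge-coloured $K_6$ and hence contains two (distinct) monochromatic triangles by Fact~\ref{fact:K6}. If $H$ contains triangles of both colours, I am done: two monochromatic triangles of different colours cannot share an edge (that edge would be bichromatic), so they either share exactly one vertex---in which case they already form a bowtie in $H$, on a vertex set avoiding $c$---or are disjoint, in which case they partition the six vertices of $H$ and conditions (a),(b) hold with the $K_6=H$ containing $x$. So I may assume all monochromatic triangles of $H$ share a colour, say red (the all-blue case being symmetric after swapping colours and the roles of the $a_i$ and $b_i$). Now I exploit the still-available blue triangle $cb_1b_2$: take any red triangle $R^*$ of $H$. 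Since $b_1b_2$ is blue, $R^*$ meets $\{b_1,b_2\}$ in at most one vertex, and as $c\notin H$ the triangles $R^*$ and $cb_1b_2$ share at most that single vertex. If they are disjoint then $R^*\subseteq\{a_1,a_2,x,y\}$, which forces $R^*$ to contain $x$ or $y$, and $R^*\cup cb_1b_2$ realises (a),(b) in a $K_6$ through a spare vertex; if they share exactly one vertex and $R^*\neq\{a_1,a_2,b_i\}$, then $R^*$ together with $cb_1b_2$ is itself a bowtie meeting $\{x,y\}$.

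The only way to avoid all of the above is that every red triangle of $H$ is one of $\{a_1,a_2,b_1\}$ or $\{a_1,a_2,b_2\}$; since Fact~\ref{fact:K6} produces two distinct (red) triangles, both of these must occur, pinning down a rigid colouring in which $a_1a_2$ and all four edges $a_ib_j$ are red, while the $K_4$ on $\{a_1,a_2,x,y\}$ carries no monochromatic triangle. I expect this rigid residual case to be the main obstacle, and I would finish it by a short direct analysis of the edges joining $x$ (and, symmetrically, $y$) to $\{c,b_1,b_2\}$. Using that neither $\{x,b_1,b_2\}$ nor any triangle $\{x,a_i,b_j\}$ may be monochromatic (the former because $H$ has no blue triangle, the latter because $x$ lies in no allowed red triangle), one forces enough blue edges to produce a blue triangle of the form $\{c,b_i,x\}$; this is disjoint from the red triangle $\{a_1,a_2,b_{3-i}\}$ and so again realises (a),(b) in a $K_6$ containing $x$. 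The few branches where such a blue triangle cannot be formed instead force a monochromatic red triangle through a spare vertex, contradicting the rigidity just derived, so those branches do not arise; this closes the argument.
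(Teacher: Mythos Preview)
Your strategy is essentially the paper's---produce a monochromatic triangle through one of the two spare vertices and then combine it with a triangle of the opposite colour from the original bowtie (either directly, if they overlap in one vertex, or via Lemma~\ref{lemma:bowtie1} if they are disjoint)---but you take a much longer route, and your final step contains a genuine error.

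In the rigid case you correctly force $xa_i,ya_i$ blue for all $i$, then $xy$ red, and (up to relabelling the $b_j$) $xb_1,yb_2$ red while $xb_2,yb_1$ blue. You then assert that the branches where no blue triangle $\{c,b_i,x\}$ or $\{c,b_i,y\}$ appears ``force a monochromatic red triangle through a spare vertex, contradicting the rigidity just derived, so those branches do not arise.'' This is wrong: your rigidity constrains only the monochromatic triangles inside $H=K_7\setminus\{c\}$. When both $cx$ and $cy$ are red, the forced red triangle is $\{c,x,y\}$, which lies outside $H$ and contradicts nothing---that branch genuinely occurs. The fix is immediate (the red triangle $\{c,x,y\}$ meets the blue triangle $\{c,b_1,b_2\}$ in exactly one vertex, so together they form a bowtie on $\{c,x,y,b_1,b_2\}\neq W$), but this is not the reason you gave, so the argument as written does not close. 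For comparison, the paper sidesteps your entire rigid-case analysis: it proves directly, by a three-line pigeonhole on the edges between $\{x,y\}$ and the blue triangle $K_B$ of $B$ (assuming $xy$ red, either some $z\in K_B$ has $xz,yz$ both red, or two vertices of $K_B$ send blue edges to a common $w\in\{x,y\}$), that some monochromatic triangle contains $x$ or $y$; pairing it with $K_B$ or its red counterpart then finishes exactly as you outlined.
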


\begin{proof}
Let $B$ be a bowtie in $K_7$ and let $K_B$ denote the blue copy of $K_3$ in $B$.
Let $\{x,y\}=V(K_7)\setminus V(B)$.
It suffices to show that there exists a monochromatic copy $K$ of $K_3$ containing either $x$ or $y$ (or both).
Indeed, suppose such $K$ exists and without loss of generality suppose that $K$ is red. 
If $K$ and $K_B$ are disjoint, then Lemma~\ref{lemma:bowtie1} implies that there is a bowtie $B'$ containing either $x$ or $y$;
so $B$ and $B'$ have different vertex sets. If $K$ and $K_B$ intersect, then they must share precisely one vertex; so $K$ and $K_B$ form a bowtie on a different vertex set to $B$.

We now prove that $K$ exists.
Without loss of generality, we may assume $xy$ is red.
If there is a vertex $z\in V(K_B)$ such that $xz$ and $yz$ are red, we are done.
Thus, for every $z\in V(K_B)$, there is a blue edge incident to $z$ and $xy$;
so two vertices of $K_B$ must be both adjacent via blue edges to some vertex  $w \in \{x,y\}$, and so we are done.
\end{proof}

We are now ready to prove part~\ref{case:BESlarge}.
Let $n\in\mathbb N$ and $G$ be a $2$-edge-coloured $n$-vertex graph with $\delta(G)\ge 65n/66$.
Set $m:=\lfloor(\delta(G)+1)/5\rfloor$.

Let $\mathcal B$ and $\mathcal T$ be two collections of subsets of $V(G)$ satisfying the following properties.

\medskip

\noindent\underline{\bf Properties:}

\smallskip

\begin{enumerate}[label=(\roman*)]

\item  For every distinct $X,Y\in \mathcal B\cup \mathcal T$ we have $X\cap Y= \emptyset$.

\item Each $X \in \mathcal B$ induces a copy of $K_5$ in $G$ that contains a bowtie.

\item Each $X \in \mathcal T$ induces a  monochromatic copy of $K_3$ in $G$. Moreover,  all these monochromatic copies of $K_3$ have the same colour.

\item $|\mathcal B|$ is as large as possible.
Conditioned on this, $|\mathcal T|$ is as large as possible.
\end{enumerate}

\smallskip

It is easy to see that $G$ contains a monochromatic copy of $(|\mathcal B|+|\mathcal T|)K_3$: if the copies of $K_3$ obtained from  $\mathcal T$ are  red say, then we select a red $K_3$ in each element of $\mathcal B$ and $\mathcal T$ and then take their disjoint union. 
Hence, it suffices to show that $|\mathcal B|+|\mathcal T|\ge m$.
We assume for a contradiction that $|\mathcal B|+|\mathcal T|<m$.
We  abuse notation slightly and write $V(\mathcal B)$, $V(\mathcal T)$ and $V(\mathcal B\cup\mathcal T)$ to denote the number of vertices covered by the elements of $\mathcal B$, $\mathcal T$ and $\mathcal B\cup\mathcal T$ respectively.
 
We start by providing a lower bound on $|\mathcal B|$.

\begin{claim}\label{claim:boundB}
We have $|\mathcal B|\ge 5n/33$.    
\end{claim}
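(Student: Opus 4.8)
The plan is to argue by contradiction within the standing assumption $|\mathcal{B}|+|\mathcal{T}|<m$: I suppose $|\mathcal{B}|<5n/33$ and derive $|\mathcal{B}|+|\mathcal{T}|\geq m$. Write $d:=n-1-\delta(G)$ for the maximum number of non-neighbours of a vertex, so $d<n/66$ by hypothesis, and set $W:=V(G)\setminus V(\mathcal{B})$. Since $|\mathcal{B}|<5n/33$ we have $|W|=n-5|\mathcal{B}|>8n/33>16d$, and every vertex of $W$ has at most $d$ non-neighbours inside $W$, so $G[W]$ is very dense. Crucially, the maximality of $|\mathcal{B}|$ forces $G[W]$ to be bowtie-free: a bowtie in $G[W]$ spans a $K_5$ disjoint from $\mathcal{B}$ that could be adjoined to $\mathcal{B}$.

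The heart of the argument is a structural lemma asserting that such a dense bowtie-free graph carries monochromatic triangles of essentially only one colour. First, $G[W]$ does contain a monochromatic triangle: its complement has maximum degree at most $d$, so it contains a clique on more than $|W|/(d+1)>16$ vertices, and any $K_6$ inside it yields a monochromatic triangle by Fact~\ref{fact:K6}. Now suppose $G[W]$ contained both a red triangle $R$ and a blue triangle $B$. Any blue triangle disjoint from the set $X_R$ of $\leq 3d$ non-neighbours of $R$ would be completely joined to $R$, producing a bowtie by Lemma~\ref{lemma:bowtie1} (or sharing a vertex with $R$, again a bowtie by definition); symmetrically for red triangles and the $\leq 3d$ non-neighbours $X_B$ of $B$. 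Hence $G[W\setminus(X_R\cup X_B)]$ has no monochromatic triangle of either colour, so it contains no $K_6$; since its complement has maximum degree at most $d$, it decomposes into at most $d+1$ cliques, each of size at most $5$, giving at most $5(d+1)$ vertices. Therefore $|W|\leq 6d+5(d+1)=11d+5<16d$, a contradiction. So $G[W]$ is blue-triangle-free, say.

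I would then extract a large monochromatic packing. Let $\mathcal{R}$ be a maximum collection of vertex-disjoint red triangles in $G[W]$, with leftover $Y$. As $G[W]$ is blue-triangle-free and $Y$ is red-triangle-free, $G[Y]$ is monochromatic-triangle-free, hence $K_6$-free, hence $|Y|\leq 5(d+1)$ by the same clique-cover bound; thus $|\mathcal{R}|\geq(|W|-5(d+1))/3$. Since $\mathcal{T}$ is a maximum same-colour triangle packing disjoint from $\mathcal{B}$, we get $|\mathcal{T}|\geq|\mathcal{R}|\geq(|W|-5(d+1))/3$. Using $|W|=n-5|\mathcal{B}|$, $|\mathcal{B}|<5n/33$ and $d+1\leq n/66$,
\[
|\mathcal{B}|+|\mathcal{T}|\geq\frac{n-2|\mathcal{B}|-5(d+1)}{3}>\frac{n-10n/33-5n/66}{3}=\frac{41n}{198}>\frac{n}{5}\geq m,
\]
which contradicts $|\mathcal{B}|+|\mathcal{T}|<m$ and completes the proof.

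The main obstacle is the structural lemma of the second paragraph, namely that the dense bowtie-free graph $G[W]$ cannot support monochromatic triangles of both colours once $|W|$ exceeds a small multiple of $d$. The extremal configuration one must control is a blow-up of a badly coloured $K_5$: it is monochromatic-triangle-free, yet under the degree hypothesis its parts have size at most $d+1$, so it can only survive on at most $5(d+1)$ vertices. This is exactly what produces the residual term $5(d+1)$, and, after dividing by $3$ and comparing with $m\leq n/5$, it is what calibrates the threshold constant $65n/66$ against the bound $5n/33$.
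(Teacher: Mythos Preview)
Your overall strategy is sound, but one step is mis-stated: a bowtie does \emph{not} in general span a $K_5$. The two monochromatic triangles of a bowtie share one vertex, but the four cross-edges between the remaining four vertices need not be present in $G$, so maximality of $|\mathcal{B}|$ (which by property~(ii) requires a $K_5$ containing a bowtie) does not force $G[W]$ to be bowtie-free. Fortunately your actual argument only ever produces bowties sitting inside cliques: when a blue triangle $B'$ is disjoint from $X_R$, every vertex of $B'$ is adjacent to all of $R$, so $R\cup B'$ induces a $K_5$ or a $K_6$ (according as $B'$ meets $R$ or not), and the bowtie you find there---directly, or via Lemma~\ref{lemma:bowtie1}---lies inside a $K_5$ that can legitimately be adjoined to $\mathcal{B}$. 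With ``bowtie-free'' replaced throughout by ``contains no $K_5$ spanning a bowtie'', the structural lemma and the final count go through.

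Your route differs substantially from, and is rather longer than, the paper's. The paper never analyses the colour structure of $G[W]$. Instead it bounds $|V(\mathcal{B}\cup\mathcal{T})|<3\delta(G)/5+10n/33$ directly from $|\mathcal{B}|+|\mathcal{T}|\le m-1$ and then uses the minimum degree on the remainder. If $\mathcal{T}=\emptyset$, the remainder has relative minimum degree above $4/5$, so it contains a $K_6$ and hence a monochromatic triangle, contradicting $\mathcal{T}=\emptyset$. If $\mathcal{T}\ne\emptyset$, one finds a $K_5$ outside $V(\mathcal{B}\cup\mathcal{T})$ fully joined to some $T\in\mathcal{T}$, yielding a $K_8$; inside this $K_8$ either a triangle of the other colour (hence a bowtie inside a $K_5$, enlarging $\mathcal{B}$) appears, or by Lemma~\ref{lemma:K8} two disjoint triangles of the colour of $T$ appear (enlarging $\mathcal{T}$). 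Either way maximality is violated in a single step, with no clique-cover bound or packing argument. Your approach buys the structural dividend that $G[W]$ carries monochromatic triangles of only one colour, at the cost of that extra work.
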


\begin{proof}
Suppose that $|\mathcal B|<5n/33$.
Then the number of vertices in $V(\mathcal B\cup\mathcal T)$ is 
$$5|\mathcal B|+3|\mathcal T|\le 5|\mathcal B|+3(m-1-|\mathcal B|)=3m+2|\mathcal B|-3
< \frac{3 \delta (G)}{5}+\frac{10n}{33}.
%
$$
First suppose that $\mathcal T=\emptyset$.
Since $\delta(G)\ge 65n/66$, we have that 
$\delta(G \setminus V(\mathcal B))\geq 65n/66-5|\mathcal B|> 4n/5-4
|\mathcal B|
= 4|G \setminus V(\mathcal B)|/5$, where the second inequality follows as 
we assume that $|\mathcal B|<5n/33$;
hence,
there  exists a $K_6$ in $G\setminus V(\mathcal B)$.
However, this copy  of $K_6$ must contain a monochromatic $K_3$ that is vertex-disjoint to $V(\mathcal B)$, contradicting the assumption that $\mathcal T=\emptyset$.

Suppose now that $\mathcal T\neq\emptyset$, say $\mathcal T$ contains a set inducing a red copy $T$ of $K_3$. 
Let $n':=n-| V(\mathcal B\cup\mathcal T)|$.
Since $\delta(G)\ge 65n/66$ and 
$|V(\mathcal B\cup\mathcal T)|<{3 \delta (G)}/{5}+{10n}/{33}$, 
we have that every vertex  $x \in V(G)$ has at least $\delta (G)-| V(\mathcal B\cup\mathcal T)|> 6n'/7$ neighbours in $G$ that lie in 
$V(G)\setminus V(\mathcal B\cup\mathcal T)$.
Thus,
there  exists a  $K_5$ in $G\setminus V(\mathcal B\cup\mathcal T)$ that together with $T$ forms a  $K_8$ in $G$.
If this copy of $K_8$ contains a blue $K_3$ then it contains a bowtie by Lemma~\ref{lemma:bowtie1}.
This contradicts the maximality of $\mathcal B$, thus any monochromatic $K_3$ in this copy of $K_8$ must be red.
By Lemma~\ref{lemma:K8}, this $K_8$ contains two vertex-disjoint red copies of $K_3$.
This contradicts the maximality of $\mathcal T$.
\end{proof}

Combining the lower bound in Claim~\ref{claim:boundB} with the minimum degree condition, we obtain the following claim.

\begin{claim}\label{claim:key}
Let $S\subseteq V(G)\setminus V(\mathcal B)$ such that $|S|\le 10$.
Then there exists $B\in\mathcal B$ such that the graph 
$G[B\cup\{s\}]$ is complete for every $s\in S$.
\end{claim}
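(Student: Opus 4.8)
The plan is to prove this by a short union-bound (pigeonhole) argument, exploiting the pairwise disjointness of the members of $\mathcal B$ (Property~(i)) together with the lower bound $|\mathcal B|\ge 5n/33$ supplied by Claim~\ref{claim:boundB}.

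First I would record the right reformulation. Since each $B\in\mathcal B$ already induces a copy of $K_5$ (Property~(ii)), the graph $G[B\cup\{s\}]$ fails to be complete precisely when $s$ has a non-neighbour inside $B$. So the goal is to locate a single $B\in\mathcal B$ containing no non-neighbour of any $s\in S$. Call $B$ \emph{bad for $s$} if $s$ has a non-neighbour in $B$; I then want a $B$ that is good for every $s\in S$.

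Next I would bound, for a fixed $s\in S$, the number of members of $\mathcal B$ that are bad for $s$. The minimum degree hypothesis $\delta(G)\ge 65n/66$ gives that $s$ has at most $n-1-\delta(G)\le n/66-1$ non-neighbours in $G$. Crucially, the members of $\mathcal B$ are pairwise disjoint, so each non-neighbour of $s$ lies in at most one member of $\mathcal B$; hence at most $n/66-1$ members are bad for $s$. Taking a union bound over $S$, the number of members of $\mathcal B$ that are bad for \emph{some} $s\in S$ is at most
\[
|S|\left(\frac{n}{66}-1\right)\le 10\left(\frac{n}{66}-1\right)=\frac{5n}{33}-10.
\]
Since Claim~\ref{claim:boundB} gives $|\mathcal B|\ge 5n/33 > \tfrac{5n}{33}-10$, strictly fewer than $|\mathcal B|$ members are bad, so some $B\in\mathcal B$ is good for every $s\in S$; that is, $G[B\cup\{s\}]$ is complete for all $s\in S$, as required.

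There is no genuine obstacle here; the only point demanding care is that the final inequality be \emph{strict}. This is exactly what the slack $-10$ provides: it arises from using the sharp per-vertex bound $n/66-1$ (rather than $n/66$) on non-neighbours together with $|S|\le 10$, and it is precisely what the bound $|\mathcal B|\ge 5n/33$ of Claim~\ref{claim:boundB} is calibrated to beat. This also clarifies why the thresholds $65n/66$ and $|S|\le 10$ appear in the statement, and why the disjointness of $\mathcal B$ is indispensable (without it a single non-neighbour could spoil many members at once).
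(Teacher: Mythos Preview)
Your proof is correct and takes essentially the same approach as the paper: both combine the lower bound $|\mathcal B|\ge 5n/33$ from Claim~\ref{claim:boundB} with the minimum-degree bound on non-neighbours via a pigeonhole/counting argument. The only cosmetic difference is that you run the count forwards (bounding the number of bad $B$'s by a union bound), whereas the paper argues by contradiction and pigeonholes onto a single $s\in S$ with at least $|\mathcal B|/|S|\ge n/66$ non-neighbours; these are contrapositive formulations of the same idea.
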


\begin{proof}
Suppose for a contradiction the claim is false; so there does not exist a set 
$B\in\mathcal B$ such that, in $G$, the vertices in $S\subseteq  V(G)\setminus V(\mathcal B)$ are adjacent to every vertex in $B$.
By the pigeonhole principle, this implies that there is a vertex $s \in S $ that is non-adjacent to at least $|\mathcal B|/|S|\geq n/66$ vertices in $G$ (and itself), a contradiction as $\delta(G)\ge 65n/66$. 
\end{proof}

We are now ready to combine all our lemmas and claims to conclude the proof.


Let $X:=\emptyset$. If $\mathcal T$ is non-empty, let $T$ be an element of $\mathcal T$.
Otherwise, set $T:=\emptyset$.
We iterate the following procedure as long as there is some edge $uv$ in $G$ non-incident to $V(\mathcal B\cup\mathcal T)\cup X$ and~$|X|\le 5$.
During the procedure, we maintain the property that  $G[T\cup X]$ is a clique.

\medskip

\noindent\underline{\bf Procedure:} 
Apply Claim~\ref{claim:key} with $S:=T\cup X\cup\{u,v\}$
to find some $B\in\mathcal B$ such that $G[B\cup\{s\}]$ is complete for every $s\in S$.
In particular, $G[B\cup\{u,v\}]$ is a copy of $K_7$.

By Lemma~\ref{lemma:bowtie2}, there exists a vertex set $B'\subseteq B\cup\{u,v\}$ such that $B'$ spans a bowtie and $B'\neq B$. Let $z \in B \setminus B'$.

Set $\mathcal B:=(\mathcal B\setminus \{B\}) \cup \{B'\}$ and $X:=X\cup\{z\}$.
Note that $\mathcal B$ and $\mathcal T$ still satisfy the initial properties.
Furthermore, $G[T\cup X]$ is a clique.\\
END PROCEDURE

\medskip

If at the end of this procedure $|X|=6$, then define $Y:=X$.
If $|X|\leq 5$ and there does not exist $w \in V(G)\setminus (V(\mathcal B\cup\mathcal T)\cup X)$ such that 
$G[T \cup X\cup \{w\}]$ is a clique, then we set $Y:=X$.
Otherwise, there is 
a $w \in V(G)\setminus (V(\mathcal B\cup\mathcal T)\cup X)$ such that 
$G[T \cup X\cup \{w\}]$ is a clique, however, in $G$, $w$ is not adjacent to any vertex in 
$V(G)\setminus (V(\mathcal B\cup\mathcal T)\cup X)$;\footnote{This latter condition follows by definition of the procedure above.}
in this case we define $Y:=X\cup \{w\}$.

Note that in all cases $G[T \cup Y]$ is a clique.
Further, if $|Y|\leq 5$ then there are no edges in
$G\setminus (V(\mathcal B\cup\mathcal T)\cup Y)$.

\begin{claim}
Either  (a) $|Y|= 6$ or (b) $|Y|=5$ and $|\mathcal B|+|\mathcal T|=m-1$.    
\end{claim}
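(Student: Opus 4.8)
The plan is to establish $|Y|\le 6$, then $|Y|\ge 5$, and finally to pin down the boundary case $|Y|=5$. The upper bound is immediate from the construction: each pass of the procedure raises $|X|$ by exactly one and the loop terminates once $|X|=6$, while $Y$ is set to either $X$ or $X\cup\{w\}$; hence $|Y|\le 6$, and whenever $|Y|=6$ we are already in case~(a). The substance of the claim is therefore to show that the assumption $|Y|\le 5$ forces both $|Y|=5$ and $|\mathcal B|+|\mathcal T|=m-1$. So I would assume $|Y|\le 5$ from here on; by the fact recorded immediately before the claim, the leftover set $W:=V(G)\setminus(V(\mathcal B\cup\mathcal T)\cup Y)$ is then independent.

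The engine of the argument is a degree bound for the vertices of $W$, sharpened by one. Since $W$ is independent, every $w\in W$ sends all of its edges into $V(\mathcal B\cup\mathcal T)\cup Y$, so a priori $\deg(w)\le 5|\mathcal B|+3|\mathcal T|+|Y|$. The key refinement is that each such $w$ in fact misses a vertex of $Y$ (or of $T\subseteq V(\mathcal T)$), and this is exactly where the two possible definitions of $Y$ come in. If $Y=X$ was set because no vertex extends $T\cup X$ to a clique, then $w$ is itself a failed candidate and hence is non-adjacent to some vertex of $T\cup X=T\cup Y$. If instead $Y=X\cup\{w_0\}$ with $w_0$ having no neighbour outside $V(\mathcal B\cup\mathcal T)\cup X$, then $w_0$ is in particular non-adjacent to $w$, so $w$ misses $w_0\in Y$. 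Either way $\deg(w)\le 5|\mathcal B|+3|\mathcal T|+|Y|-1$.

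Given this, the conclusion is pure bookkeeping. Rewriting $5|\mathcal B|+3|\mathcal T|=5(|\mathcal B|+|\mathcal T|)-2|\mathcal T|$ and inserting the contradiction hypothesis $|\mathcal B|+|\mathcal T|\le m-1$, the definition-driven inequality $5m\le\delta(G)+1$, and the trivial $\delta(G)\le n-1$, the chain $\delta(G)\le\deg(w)\le\cdots$ collapses to $|Y|\ge 5+2|\mathcal T|$. Under the standing assumption $|Y|\le 5$ this forces $|Y|=5$ and $|\mathcal T|=0$, and reading the inequalities back shows they are all tight, whence $|\mathcal B|+|\mathcal T|=m-1$. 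The degenerate possibility $W=\emptyset$ is treated identically, with the degree inequality replaced by the exact identity $n=5|\mathcal B|+3|\mathcal T|+|Y|$, leading to the same outcome.

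I expect the main obstacle to be securing the extra $-1$ in the degree bound, that is, checking that every leftover vertex genuinely fails to see some vertex of $Y$. Without this gain the counting only yields $|Y|\ge 4$, which cannot exclude $|Y|=4$ and so would leave the claim unproven; the refinement is precisely what separates $|Y|=5$ from $|Y|=4$, and it is the reason the procedure was engineered to halt in one of the two configurations analysed above.
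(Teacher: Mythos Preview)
Your proposal is correct and follows essentially the same route as the paper. The paper's proof also hinges on the inequality $\delta(G)\le |V(\mathcal B\cup\mathcal T)\cup Y|-1$, justified tersely ``by definition of $Y$''; you have simply unpacked this justification into the two cases for how $Y$ was formed, which is exactly the content the paper leaves implicit. One minor difference: the paper bounds $5|\mathcal B|+3|\mathcal T|\le 5(|\mathcal B|+|\mathcal T|)$ and so only extracts $|Y|\ge 5$, whereas you retain the $-2|\mathcal T|$ term and obtain the bonus conclusion $|\mathcal T|=0$ in case (b) --- correct, but not needed for the claim itself.
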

\begin{proof}
If~$|Y|= 6$ we are done, so suppose that~$|Y|\le 5$.
If $V(G) = V(\mathcal B\cup\mathcal T)\cup Y$,
then $\delta(G) \leq |V(\mathcal B\cup\mathcal T)\cup Y|-1$.
Otherwise, by definition of $Y$, for
 every vertex $x$ in $V(G)\setminus(V(\mathcal B\cup\mathcal T)\cup Y)$ we have
$\delta(G)\le d_G(x)\le |V(\mathcal B\cup\mathcal T)\cup Y|-1.$

In both cases we conclude that 
$$\delta(G)\le |V(\mathcal B\cup\mathcal T)\cup Y|-1\le5(|\mathcal B|+|\mathcal T|)+|Y|-1.$$
Combining the above with $|\mathcal B|+|\mathcal T|\leq m-1$ and $m\le(\delta(G)+1)/5$ we obtain
$$\delta(G)\le5m-6+|Y|\le\delta(G)-5+|Y|.$$
In particular, we must have $|Y|\ge5$.
However, we assumed that~$|Y|\le5$.
Therefore, $|Y|=5$ and all the above inequalities are in fact equalities.
Thus, we have $|\mathcal B|+|\mathcal T|=m-1$.
\end{proof}

First, suppose $\mathcal T$ is non-empty, hence $G[T]$ is a monochromatic $K_3$.
We have that $G[T\cup Y]$ is a clique, and in particular it has at least $|T|+|Y|\ge3+5=8$ vertices.
If $G[T\cup Y]$ contains a monochromatic $K_3$ whose colour is different from the colour of $G[T]$, then by Lemma~\ref{lemma:bowtie1} it contains a bowtie.
This contradicts the assumption that $\mathcal B$ is maximal.
Hence, all monochromatic copies of $K_3$ in $G[T\cup Y]$ must be of the same colour as $G[T]$.
By Lemma~\ref{lemma:K8}, $G[T\cup Y]$ must contain two vertex-disjoint monochromatic copies of $K_3$.
This contradicts the assumption that $\mathcal T$ is maximal.

Therefore,  it must be the case that $\mathcal T$ is empty.
If $|Y|=6$, then $G[Y]$ contains a monochromatic $K_3$ that does not intersect $V(\mathcal B)$.
This contradicts the maximality of $\mathcal T$.
Hence, we may assume that $|Y|=5$ and so $|\mathcal B|+|\mathcal T|=|\mathcal B|=m-1$.
By applying Claim~\ref{claim:key} with $S=Y$, there is some $B$ in $\mathcal B$ such that $G[B\cup Y]$ is a clique.
Since $|B|+|Y|=5+5=10$, by Theorem~\ref{theorem:BES}, $G[B\cup Y]$ contains a monochromatic copy of $2K_3$, say red.
It follows that there are $|\mathcal B\setminus \{B\}|+2$  vertex-disjoint red copies of $K_3$.
Since $|\mathcal B|=m-1$, it follows that there is a  red copy of $mK_3$, as required.\hfill Q.E.D.

\section{Further results and concluding remarks}

\subsection{Further results on Problem~\ref{problem:alaMoon}}

In this subsection, we discuss some further results related to Problem~\ref{problem:alaMoon}. 
First, similarly to Theorems~\ref{theorem:alaBES} and~\ref{theorem:alaMoon},  if $\delta$ is not large enough then we might not be able to ensure even a single monochromatic copy of the sought structure.
This is formalised via the \emph{chromatic Ramsey number}, a parameter introduced in~\cite{ars}.

\begin{define}\label{define:degRamsey}
For $r\in\mathbb N$ and a graph $H$,
we say a graph $G$ is \emph{$(H,r)$-Ramsey} if every $r$-edge-colouring of $G$ contains a monochromatic copy of $H$. The \emph{chromatic Ramsey number} $R_{\chi} (H,r)$ is the least $m \in \mathbb N$ such that there exists an $(H,r)$-Ramsey graph of chromatic number $m$.
\end{define}
For example, it is simple to see that
$R_\chi (K_\ell,r)=R_r (K_\ell)$ for all $\ell,r\geq 2$.

Note that by definition of 
$R_\chi (H,r)$, 
there is an $r$-edge-colouring of the $(R_\chi (H,r)-1)$-partite Tur\'an graph on $n$ vertices (for any choice of $n \in \mathbb N$)
that does not yield a monochromatic copy of $H$. Thus, 
Problems~\ref{problem:alaBES} and~\ref{problem:alaMoon} are trivial if
\begin{equation}\label{eq:trivial}
\delta\le\left\lfloor\left(1-\frac{1}{R_\chi(H,r)-1}\right)n\right\rfloor,
\end{equation}
since, in this case, we cannot guarantee even a single monochromatic copy of~$H$. On the other hand, the Erd\H{o}s--Stone--Simonovits theorem implies that for any $\eta>0$, every sufficiently large $r$-edge-coloured $n$-vertex graph $G$ with $\delta(G)\geq (1-\frac{1}{R_\chi(H,r)-1}+\eta )n$ does contain a monochromatic copy of $H$.

\medskip

The next simple result generalises part \ref{case:Moonsmall}  of Theorem~\ref{theorem:alaMoon} for larger cliques and multiple colours.
Set $R_r(\ell):=R_r(K_{\ell})$.

\begin{theorem}\label{theorem:further1}
Let~$n,r\in\mathbb N$, $\ell \geq 2$, and let $G$ be an $r$-edge-coloured $n$-vertex graph such that
$$\left(1-\frac{1}{R_r(\ell)-1}\right)n\le\delta(G)\le \left(1-\frac{1}{R_r(\ell)}\right)n.$$
Then there exists a \hbox{$K_\ell$-tiling} in~$G$ such that every copy of~$K_\ell$ is monochromatic and the number of copies of~$K_\ell$ in the tiling is at least
$(R_r(\ell)-1)\delta(G)-(R_r(\ell)-2)n$.
\end{theorem}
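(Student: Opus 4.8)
The plan is to mirror exactly the proof of part~\ref{case:Moonsmall} of Theorem~\ref{theorem:alaMoon}, of which this statement is the natural generalisation (there one takes $\ell=3$ and $r=2$, so that the relevant clique size is $R_2(3)=R(K_3)=6$). Write $t:=R_r(\ell)$ for brevity. The hypothesis on the minimum degree is precisely
$$\left(1-\frac1{t-1}\right)n\le\delta(G)\le\left(1-\frac1{t}\right)n,$$
which is exactly the range needed to apply Theorem~\ref{thm:HajnalSzm2}.

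First I would invoke Theorem~\ref{thm:HajnalSzm2} with this value of $t$. This produces a $K_t$-tiling in $G$ consisting of $(t-1)\delta(G)-(t-2)n$ vertex-disjoint copies of $K_t$ (together with a $K_{t-1}$-tiling, which we simply discard). One checks that $(t-1)\delta(G)-(t-2)n$ is a nonnegative integer: it is plainly an integer, and the lower bound $\delta(G)\ge(1-1/(t-1))n$ forces $(t-1)\delta(G)-(t-2)n\ge 0$, with equality exactly at the left endpoint.

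The second step extracts a monochromatic $K_\ell$ from each of these cliques. Since each such clique is a copy of $K_t=K_{R_r(\ell)}$, and the edge-colouring of $G$ restricts to an $r$-edge-colouring of it, the very definition of $R_r(\ell)$ guarantees a monochromatic copy of $K_\ell$ inside. As the copies of $K_t$ are pairwise vertex-disjoint, so are the chosen copies of $K_\ell$. Collecting one from each clique yields a $K_\ell$-tiling in which every copy of $K_\ell$ is monochromatic and whose size equals the number of cliques, namely $(R_r(\ell)-1)\delta(G)-(R_r(\ell)-2)n$, as required. There is no genuine obstacle here: the argument is essentially a two-line reduction, and the only thing to notice is that $R_r(\ell)$ is the \emph{correct} clique size—this holds by definition, since $R_r(\ell)$ is exactly the threshold at which any $r$-edge-coloured complete graph is forced to contain a monochromatic $K_\ell$, and the two minimum-degree thresholds $1-1/(t-1)$ and $1-1/t$ align perfectly with the input of Theorem~\ref{thm:HajnalSzm2}. (In the case $\ell=3$, $r=2$ one recovers $t=6$, and Fact~\ref{fact:K6} plays precisely the role of the identity ``$R_2(3)=6$''.)
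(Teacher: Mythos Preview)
Your proposal is correct and essentially identical to the paper's own proof: apply Theorem~\ref{thm:HajnalSzm2} with $t=R_r(\ell)$ to obtain the $K_t$-tiling, then extract a monochromatic $K_\ell$ from each clique using the definition of $R_r(\ell)$.
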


\begin{proof} 
By Theorem~\ref{thm:HajnalSzm2},~$G$ contains a~$K_{R_r(\ell)}$-tiling consisting of~$(R_r(\ell)-1)\delta(G)-(R_r(\ell)-2)n$ copies of~$K_{R_r(\ell)}$.
By definition, each~$K_{R_r(\ell)}$ contains a monochromatic copy of $K_\ell$.
It follows that~$G$ contains a~\hbox{$K_\ell$-tiling} consisting of~$(R_r(\ell)-1)\delta(G)-(R_r(\ell)-2)n$ monochromatic copies of~$K_\ell$, as required.  
\end{proof}

Note that Theorem~\ref{theorem:further1} is best possible.
Indeed, consider an $r$-edge-coloured $K_{R_r(\ell)}$ such that all monochromatic copies of $K_\ell$ have a common vertex $v$. Let $\delta \in \mathbb N$ so that $(1-\frac{1}{R_r(\ell)-1})n< \delta\le (1-\frac{1}{R_r(\ell)})n.$
Consider the $n$-vertex blow-up $G$ of this $K_{R_r(\ell)}$ where $v$ is replaced by a class $V$ of size $(R_r(\ell)-1)\delta-(R_r(\ell)-2)n$ and all other vertices are replaced by a class of size $n-\delta$.
Note that $\delta (G)= \delta$, and every monochromatic copy of $K_\ell$ in $G$ must contain some vertex in $V$.
Therefore, any collection of more than $|V|$ monochromatic copies of $K_\ell$ cannot  form a $K_\ell$-tiling.

\smallskip

The next theorem considers the case where $\delta(G)$ is close to $n-1$.
To state it, we need the following variant of the Ramsey number.

\begin{define}
Let  $r\ge2$.
A {\it special $r$-edge-colouring} of a graph $G$ is an $r$-edge-colouring of $G$ using colours $c_1,\dots, c_r$ such that there exists a  $v \in V(G)$ that is not incident to any edge of colour $c_i$, for some $i \in [r]$.
The \emph{special Ramsey number} $SR_r(H)$ is the smallest  $n\in \mathbb N$ such that any special $r$-colouring of $K_n$ yields a monochromatic copy of $H$.
We write $SR_r(\ell):=SR_r(K_\ell)$.
\end{define}

\begin{theorem}\label{further2}
For every $r,\ell\ge2$, there exists $n_0\in\mathbb N$ such that the following holds.
Let~$G$ be an \hbox{$r$-edge-coloured} graph on~$n\ge n_0$ vertices and with~$\delta(G)\ge(1-1/n_0)n$.
Then there exists a \hbox{$K_\ell$-tiling} in~$G$ consisting of monochromatic $K_\ell$ and the number of copies of~$K_\ell$ in the tiling is at least 
$$\left\lfloor\frac{(SR_r(\ell)-2)\delta(G)-(SR_r(\ell)-3)n}{\ell}\right\rfloor.$$
\end{theorem}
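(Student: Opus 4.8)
The plan is to follow the strategy of Theorem~\ref{theorem:alaMoon}\ref{case:Moonlarge}: cover almost all of $G$ by repeatedly deleting a single monochromatic $K_\ell$, and then finish a bounded ``endgame'' by a Hajnal--Szemer\'edi tiling into cliques, inside each of which we extract several disjoint monochromatic copies of $K_\ell$. Write $s:=SR_r(\ell)$ and $d:=n-\delta(G)$; the hypothesis $\delta(G)\ge(1-1/n_0)n$ gives $d\le n/n_0$, and every vertex of $G$ has at most $d-1$ non-neighbours. The key point is that this last property is preserved under vertex deletion, so every subgraph produced during the argument will again have all non-neighbourhoods of size at most $d-1$. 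I would fix once and for all an integer $T=T(r,\ell)$ with $T\ge R_r(\ell)$, with $T\equiv s-2\pmod\ell$, and with $T$ large enough for the boundary lemma stated below to apply; then I would choose $n_0>T$.

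First I would isolate the following statement and prove it by induction on $N:=|H|$: \emph{if $H$ is an $r$-edge-coloured graph on $N\ge Td$ vertices in which every vertex has at most $d-1$ non-neighbours, then $H$ contains a tiling by at least $\lfloor(N-(s-2)d)/\ell\rfloor$ vertex-disjoint monochromatic copies of $K_\ell$.} Applied to $H=G$ (which is legitimate since $d\le n/n_0<n/T$, so $n\ge Td$) this gives the theorem, because $n-(s-2)d=(s-2)\delta(G)-(s-3)n$.

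For the inductive step, suppose $N\ge Td+\ell$. Since every vertex has at most $d-1$ non-neighbours and $N>Td\ge(R_r(\ell)-1)d$, a greedy selection of vertices into a common neighbourhood yields a copy of $K_{R_r(\ell)}$ in $H$; by definition of $R_r(\ell)$ this clique contains a monochromatic copy $Q$ of $K_\ell$. Deleting $V(Q)$ leaves a graph on $N-\ell\ge Td$ vertices which still has all non-neighbourhoods of size at most $d-1$, so by induction it has a tiling of size at least $\lfloor(N-\ell-(s-2)d)/\ell\rfloor=\lfloor(N-(s-2)d)/\ell\rfloor-1$; adding $Q$ completes the step. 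For the base case $Td\le N<Td+\ell$ we have $\delta(H)\ge N-d\ge(1-1/T)N$, so the Hajnal--Szemer\'edi theorem (Theorem~\ref{thm:HajnalSzm}) provides a $K_T$-tiling covering all but fewer than $T$ vertices, hence $\lfloor N/T\rfloor\ge d$ vertex-disjoint copies of $K_T$ in $H$. Extracting $(T-(s-2))/\ell$ disjoint monochromatic $K_\ell$ from each (by the boundary lemma) produces at least $d\,(T-(s-2))/\ell=(Td-(s-2)d)/\ell$ disjoint monochromatic $K_\ell$, and since $N<Td+\ell$ this is at least $\lfloor(N-(s-2)d)/\ell\rfloor$, as required.

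The whole argument thus rests on the \emph{boundary lemma}: \emph{for all sufficiently large $T$ with $T\equiv s-2\pmod\ell$, every $r$-edge-coloured $K_T$ contains at least $(T-(s-2))/\ell$ vertex-disjoint monochromatic copies of $K_\ell$}, which I expect to be the main obstacle. This is exactly the complete-host-graph case of Problem~\ref{problem:alaMoon}, and it is where $SR_r(\ell)$ enters: the number $s-2=SR_r(\ell)-2$ of ``wasted'' vertices matches the fact that a special $r$-colouring of $K_{s-1}$ can avoid a monochromatic $K_\ell$. For $r=2$ the lemma is the theorem of Burr--Erd\H os--Spencer quoted after Theorem~\ref{theorem:Moon} (for $\ell=3$ it specialises to Moon's $K_{3m+2}$ result, and the instance $T=8$ is Lemma~\ref{lemma:K8}). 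For general $r$ I would prove it by the same scheme used for those results: greedily remove disjoint monochromatic $K_\ell$ while more than about $R_r(\ell)$ vertices remain, and then run a special-Ramsey (bowtie-type) endgame on the final $\le s-1$ vertices, exploiting that as soon as $s=SR_r(\ell)$ vertices contain a colour-deficient vertex a monochromatic $K_\ell$ is forced. Carrying out this endgame cleanly for arbitrary $r$ and $\ell$, and thereby pinning the wasted count down to exactly $s-2$ rather than the weaker $R_r(\ell)-1$ that ordinary Ramsey would give, is the technical heart of the proof.
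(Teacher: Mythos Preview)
Your reduction to the ``boundary lemma'' is exactly the paper's approach: the paper sets $n_0:=t\ell+SR_r(\ell)-2$ for a suitable $t$, handles the base range via Hajnal--Szemer\'edi into copies of $K_{n_0}$, and otherwise peels off a single monochromatic $K_\ell$ and inducts. Your boundary lemma is precisely the paper's Theorem~\ref{further2-1}, and your tracking of the non-neighbourhood bound $d-1$ through deletions is a clean way to phrase the same degree arithmetic the paper carries out.

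Where your sketch falls short is in the proof of the boundary lemma itself. The scheme you outline---greedily remove monochromatic $K_\ell$'s until fewer than $R_r(\ell)$ vertices remain, then run a special-Ramsey endgame on the residue---has a genuine gap: after greedy removal you have no reason for any vertex of the residual set to be colour-deficient, so $SR_r(\ell)$ gives you nothing and you are stuck wasting up to $R_r(\ell)-1$ vertices rather than $s-2$. The paper's fix is to first secure a large monochromatic (say red) clique $K$ of size $(\ell-1)R_r(\ell)$ and use it as a \emph{reservoir}. After greedily clearing $V(G)\setminus V(K)$ down to fewer than $R_r(\ell)$ vertices, any survivor with $\ell-1$ red neighbours remaining in $K$ is absorbed into a red $K_\ell$; the vertices that survive this second pass therefore have at most $\ell-2$ red neighbours left in $K$, which (since $|K|$ was chosen large enough) guarantees reservoir vertices $v$ with \emph{no} red edge into the residual set. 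Adjoining such a $v$ to the residue manufactures exactly the colour-deficient vertex needed to trigger the definition of $SR_r(\ell)$, and iterating drives the residue below $s-1$. You correctly flagged that producing colour-deficiency is the technical heart; the reservoir clique is the missing mechanism.
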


Observe that the bound on the number of copies of~$K_\ell$ in Theorem~\ref{further2} is optimal, as shown by the following construction.
Firstly, the case~$\ell=2$ is trivial since $SR_r (2)=2$ and an $n$-vertex graph can have at most $\lfloor n/2\rfloor$ vertex-disjoint copies of~$K_2$.
So fix~$r,\ell\in\mathbb N$ with~$r\ge2$ and~$\ell\ge3$, and let $n_0:=SR_r(\ell)-2\ge1$.
Pick any~$n,\delta\in\mathbb N$ with~$n-1\ge\delta>(1-1/n_0)n$. 
Consider a special $r$-edge-coloured copy $H$ of~$K_{SR_r(\ell)-1}$ which does not contain a monochromatic copy of~$K_\ell$; say the vertex~$v$ is not adjacent to any red edge.
Let~$G$ be the $r$-edge-coloured graph obtained by blowing up each vertex in~$V(H)\setminus\{v\}$ to a class of size~$n-\delta\ge1$ and~$v$ to a class~$U$ of size~$n-(n-\delta)(SR_r(\ell)-2)$.
Observe that
$$n-(n-\delta)(SR_r(\ell)-2)>n-(n-(1-1/n_0)n)(SR_r(\ell)-2)=0,$$
so the blow-up is well-defined.
Finally, add all edges inside~$U$ and colour them red.
Note that~$\delta(G)=n-(n-\delta)=\delta$.
Any monochromatic copy of~$K_\ell$ in~$G$ must have at least two vertices in~$U$, otherwise there would be a monochromatic~$K_\ell$ in~$H$, a contradiction.
However, since all edges in~$U$ are red and all edges between~$U$ and $V(G)\setminus U$ are not red, it follows that every monochromatic copy of~$K_\ell$ must lie completely in~$U$.
Therefore, if there are~$m$ vertex-disjoint monochromatic copies of~$K_\ell$ in~$G$ then
$$m\le\left\lfloor\frac{|U|}{\ell}\right\rfloor=\left\lfloor\frac{(SR_r(\ell)-2)\delta-(SR_r(\ell)-3)n}{\ell}\right\rfloor,$$
where the right-hand side of the above inequality matches the bound in Theorem~\ref{further2}.

Recall that the proof of case~\ref{case:Moonlarge} of Theorem~\ref{theorem:alaMoon} combined Lemma~\ref{lemma:K8}  with the Hajnal--Szemer\'edi theorem.
To prove Theorem~\ref{further2}, we need a generalisation of Lemma~\ref{lemma:K8} to larger cliques and multiple colours.
For  two colours, such a generalisation was  obtained by Burr, Erd\H os and Spencer~\cite{BurrES}, as mentioned in the introduction.
We obtain a further generalisation for more colours.

\begin{theorem}\label{further2-1}
For every $r,\ell\ge2$, there exists some  sufficiently large $t\in\mathbb N$ such that every $r$-edge-coloured $K_{t\ell+SR_r(\ell)-2}$ contains  $t$ vertex-disjoint monochromatic copies of~$K_\ell$.
\end{theorem}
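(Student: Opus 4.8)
The plan is to fix one large value of $t$ and argue by a greedy-plus-augmentation scheme. Since the theorem only requires \emph{some} sufficiently large $t$, I would fix one such $t$ and prove the statement for it directly; the conclusion for every larger value then follows by repeatedly deleting a single monochromatic $K_\ell$ (which exists since the host has more than $R_r(\ell)$ vertices once $t$ is large) and applying the fixed case to the remaining complete graph. Write $c:=SR_r(\ell)-2$, so the host is $K_N$ with $N=t\ell+c$. Let $\mathcal F$ be a family of vertex-disjoint monochromatic copies of $K_\ell$ of maximum size $s$, and let $W$ be the set of uncovered vertices, so $|W|=N-s\ell$. If $s\ge t$ we are done, so suppose $s\le t-1$ and set $d:=t-s\ge1$, aiming for a contradiction with the maximality of $\mathcal F$.

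Two structural facts drive the argument. First, since $\mathcal F$ is maximum, $G[W]$ contains no monochromatic $K_\ell$, so $|W|\le R_r(\ell)-1$; as $|W|=d\ell+c$ this bounds the deficiency $d$ by the constant $(R_r(\ell)-1-c)/\ell$, i.e.\ the greedy family is already within $O_{r,\ell}(1)$ of the target. Second, $|W|=d\ell+c\ge \ell+c\ge SR_r(\ell)$ because $\ell\ge2$, and $G[W]$ has no monochromatic $K_\ell$; by the definition of $SR_r(\ell)$ the colouring induced on $W$ cannot be special, i.e.\ \emph{every} vertex of $W$ is incident within $W$ to an edge of each of the $r$ colours. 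The real leverage of $SR_r(\ell)$ is the following insertion principle: if $W^\ast\subseteq W$ with $|W^\ast|=SR_r(\ell)-1$ and a vertex $x\notin W^\ast$ misses some colour on all its edges to $W^\ast$, then $G[W^\ast\cup\{x\}]$ is a special colouring of $K_{SR_r(\ell)}$, hence contains a monochromatic $K_\ell$; since $W^\ast$ itself has none, this copy passes through $x$ and uses $\ell-1$ vertices of $W^\ast$.

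To reach a contradiction I would augment $\mathcal F$ by combining the colour-rich leftover $W$ with the monochromatic cliques already in $\mathcal F$. Since $t$, and hence $s$, is large, by pigeonhole some colour -- say red -- is the colour of arbitrarily many blocks $F_1,\dots,F_p\in\mathcal F$. The goal is to repack $W\cup V(F_1)\cup\dots\cup V(F_k)$, for a suitable bounded $k$, into $k+1$ vertex-disjoint monochromatic copies of $K_\ell$: replacing $F_1,\dots,F_k$ by these $k+1$ copies raises $|\mathcal F|$ by one, contradicting maximality. Here the insertion principle supplies monochromatic copies that dip into $W$ through individual block vertices, while the red cliques $F_i$ furnish ready-made monochromatic $K_{\ell-1}$'s that one tries to complete, using the freed block vertices together with the colour-richness of $W$.

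The main obstacle is guaranteeing that this repacking yields a \emph{net} gain, and it is sharpest in the extreme case $d=1$, where $|W|=\ell+c$ leaves essentially no slack beyond a single witness set $W^\ast$ of size $SR_r(\ell)-1$. This is precisely the regime that makes the hypothesis ``$t$ sufficiently large'' indispensable: a single block need not suffice, because the analogous two-copy statement can fail for small parameters, and it is the abundance of same-coloured blocks that provides the room to rearrange. As a sanity check, for $r=2$ and $\ell=3$ one has $SR_2(3)=4$ and $c=2$, and the case $d=1$ reduces to showing that $V(F)\cup W=K_8$ contains two disjoint monochromatic triangles, which is exactly Lemma~\ref{lemma:K8}. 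Thus the augmentation step is the natural common generalisation of Lemma~\ref{lemma:K8} (and of Moon's theorem) to arbitrary $\ell$ and $r$, and carrying it out in general is the crux of the proof.
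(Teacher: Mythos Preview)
Your setup is correct: the bounds $|W|\le R_r(\ell)-1$ and $|W|\ge SR_r(\ell)$ hold, and the insertion principle is valid. But the proposal stops exactly where the real work begins. You say that ``carrying it out in general is the crux of the proof'', and it is---yet you have not produced any repacking of $W\cup V(F_1)\cup\dots\cup V(F_k)$ into $k+1$ vertex-disjoint monochromatic $K_\ell$'s. The insertion principle needs a vertex $x$ missing some colour on \emph{all} its edges to a witness set $W^\ast\subseteq W$, and vertices of your red blocks have no reason to satisfy this. Nor does $\bigcup_i V(F_i)$ form a clique (the cross-edges between different $F_i$'s are arbitrary), so many small red $K_\ell$'s do not assemble into anything usable. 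Your sanity check at $(r,\ell)=(2,3)$ uses a single block and invokes Lemma~\ref{lemma:K8} as a black box; it does not actually test the multi-block augmentation you are proposing for general $(r,\ell)$.

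The paper avoids this difficulty by reversing the order of operations. Rather than taking a maximum tiling and trying to augment, it first reserves one large monochromatic clique $K$ of size $t_0:=(\ell-1)R_r(\ell)$---this exists because $n\ge t:=R_r(t_0)$, which is precisely why $t$ must be large---and treats $K$ as a reservoir. After greedily tiling $V(G)\setminus V(K)$ down to a leftover set $V$ with $|V|\le R_r(\ell)$ (Phase~I), each $v\in V$ either has $\ell-1$ red neighbours in the unused part $C\subseteq V(K)$, yielding a red $K_\ell$ (Phase~II), or it does not; in the latter case a count using $|C|\ge(\ell-1)|V|$ produces $S\subseteq C$ with $|S|=|V|$ and no red $S$--$V$ edges, so each $s\in S$ is special over $V$ and the insertion principle applies \emph{from the reservoir side} into $V$ (Phase~III), shrinking $|V|$ to at most $SR_r(\ell)-2$. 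The point is that one large monochromatic clique furnishes a clean dichotomy at every leftover vertex, whereas many small monochromatic $K_\ell$'s do not: a vertex of $W$ could have a few red edges into each $F_i$ without completing any of them. Reserving the monochromatic reservoir up front is the idea your proposal is missing.
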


Theorem~\ref{further2-1} is sharp.
Indeed, pick a special $r$-edge-colouring of $K_{SR_r(\ell)-1}$ that does not contain a monochromatic $K_{\ell}$; say  no edge of colour $c$ is adjacent to the vertex $v$ in $K_{SR_r(\ell)-1}$.
Then blow-up the vertex $v$ into a class $A$ of size $t\ell-1$, and add all edges inside $A$ and colour them  $c$.
The resulting graph is an $r$-edge-coloured $K_{t\ell+SR_r(\ell)-3}$.
Every monochromatic $K_\ell$ must lie in $A$, and thus there are no $t$  vertex-disjoint monochromatic $K_\ell$.

\begin{proof}[Proof of Theorem~\ref{further2-1}]
Take $t_0:=(\ell-1) R_r(\ell)$, $t:=R_r(t_0)$ and $n:=t\ell+SR_r(\ell)-2$.
It suffices to show that any $r$-edge-coloured copy $G$ of~$K_n$ contains~$t$ vertex-disjoint monochromatic copies of~$K_\ell$.
As~$n\ge t=R_r(t_0)$, there exists a monochromatic copy $K$ of $K_{t_0}$ in $G$, say a red copy.

Next, we construct a collection~$\mathcal H$ of vertex-disjoint monochromatic copies of~$K_\ell$.
This is achieved by repeatedly adding monochromatic copies of~$K_\ell$ to~$\mathcal H$ according to certain rules.
We write~$V(\mathcal H)$ to denote the set of vertices contained in some copy of $K_\ell$ in $\mathcal H$.
We also define $C:=V(K)\setminus V(\mathcal H)$ and $V:=V(G)\setminus (V(K)\cup V(\mathcal H))$.
Note that, as we add copies of~$K_\ell$ to~$\mathcal H$, the sets $V$, $C$ and~$V(\mathcal H)$ will be modified accordingly. 
It is convenient to think of~$V$ and~$C$ as the vertices in $V(G)\setminus V(K)$ and~$V(K)$ that have not yet been used to form copies of~$K_\ell$ in~$\mathcal H$.
Initially, we set $\mathcal H:=\emptyset$ and so we have $V=V(G)\setminus V(K)$ and $C=V(K)$.
We add copies of~$K_\ell$ to~$\mathcal H$ in three phases.

\medskip

{\bf Phase I.} If~$V$ contains a monochromatic copy of $K_\ell$, add it to~$\mathcal H$.
Iterate this as long as possible. 

\medskip

After Phase I is completed, we must have that there is no monochromatic copy of~$K_\ell$ in~$V$.
In particular, we have $|V|\le R_r(\ell)$.
Note that no copy of~$K_\ell$ added to~$\mathcal H$ so far intersects~$K$, hence we still have $C=V(K)$ and so $|C|=(\ell-1)R_r(\ell)\ge(\ell-1)|V|$.

\medskip

{\bf Phase II.} If there is a vertex $v\in V$ and a set $X\subseteq C$ with $|X|=\ell-1$ such that all edges between~$v$ and~$X$ are red, then $G[\{v\}\cup X]$ is a red copy of~$K_\ell$.
Add it to $\mathcal H$ and iterate as long as possible.

\medskip

After Phase II is completed, if $V=\emptyset$ then $V(\mathcal H)\cup V(K)=V(G)$. 
Since~$K$ is monochromatic, it is easy to see that there exist~$t$ vertex-disjoint monochromatic copies of~$K_\ell$ as required.
Hence, suppose that $V\neq\emptyset$.

Observe that after each iteration of Phase II, the quantities~$|V|$ and~$|C|$ decrease by~$1$ and~$\ell-1$, respectively.
Thus, after Phase II is completed, it is still the case that $|C|\ge(\ell-1)|V|>0$.

Since both $V$ and $C$ are non-empty, Phase II must have terminated because 
for every vertex $v\in V$ there are at most $\ell-2$ red edges between~$v$ and $C$.
This fact together with $|C|\ge(\ell-1)|V|$ implies there exists a subset $S\subseteq C$ of size $|S|=|V|$ such that none of the edges between $S$ and $V$ are coloured red.

\medskip

{\bf Phase III.} 
If $|V|\ge SR_r(\ell)-1$, then pick any vertex~$v\in S\setminus V(\mathcal H)$ and observe that  $G[V\cup\{v\}]$ has at least $SR_r(\ell)$ vertices and  is equipped with a special $r$-edge-colouring (namely, no edge incident to~$v$ is coloured red).
Therefore, $G[V\cup\{v\}]$ contains a monochromatic copy of $K_\ell$.
Add this copy to~$\mathcal H$, and iterate as long as it is possible.

\medskip

Observe that after each iteration of Phase III, the quantity $|V|$ decreases by either~$\ell-1$ or~$\ell$ while $|S\setminus V(\mathcal H)|$ decreases by either~$1$ or~$0$. 
In particular, after Phase~III has terminated we have $|V|\le|S\setminus V(\mathcal H)|$. 
Hence, it must be the case that Phase~III terminated because $|V|\le SR_r(\ell)-2$.

It follows that $|V(\mathcal H)\cup V(K)|\ge t\ell$.
Since $K$ is monochromatic, it is easy to see that $G[V(\mathcal H)\cup V(K)]$ contains~$t$ vertex-disjoint monochromatic copies of $K_\ell$, as required.
\end{proof}

We are now ready to prove Theorem~\ref{further2}.

\begin{proof}[Proof of Theorem~\ref{further2}]
Let $r, \ell\ge2$ and $t\in\mathbb N$ be sufficiently large so that  the statement of Theorem~\ref{further2-1} holds.
Let $n_0:=t\ell+SR_r(\ell)-2$.
It suffices to show that for any \hbox{$r$-edge-coloured} graph~$G$ on~$n\ge n_0$ vertices and with~$\delta(G)\ge(1-1/n_0)n$, there exists a \hbox{$K_\ell$-tiling} in~$G$ such that every copy of~$K_\ell$ is monochromatic and the number of copies of~$K_\ell$ in the tiling is at least 
$$\left\lfloor\frac{(SR_r(\ell)-2)\delta(G)-(SR_r(\ell)-3)n}{\ell}\right\rfloor.$$

We proceed by induction on~$n$.
Before this, we consider the case when $(1-1/n_0)n\le\delta(G)\le((n_0-1)n+\ell-1)/n_0$ (for all~$n\geq n_0$).
Note that any induced subgraph~$H$ of~$G$ with~$|H|=n_0(n-\delta(G))\le n$ satisfies~$\delta(H)\ge\delta(G)-(|G|-|H|)=(n_0-1)(n-\delta(G))=(1-1/n_0)|H|$.
Theorem~\ref{thm:HajnalSzm} implies~$H$, and thus~$G$, contains a~$K_{n_0}$-tiling consisting of~$n-\delta(G)$ copies of~$K_{n_0}$.
By Theorem~\ref{further2-1}, each~$K_{n_0}$ contains $t$ \hbox{vertex-disjoint} monochromatic copies of~$K_\ell$.
Taking the union of all such copies yields a~\hbox{$K_\ell$-tiling} consisting of precisely~$t(n-\delta(G))$ monochromatic copies of~$K_\ell$. 
This concludes the verification of this case, as
$$\left\lfloor\frac{(SR_r(\ell)-2)\delta(G)-(SR_r(\ell)-3)n}{\ell}\right\rfloor
=t(n-\delta(G))+\left\lfloor\frac{(SR_r(\ell)-2+t\ell)\delta(G)-(SR_r(\ell)-3+t\ell)n}{\ell}\right\rfloor$$
$$=t(n-\delta(G))+\left\lfloor\frac{n_0\delta(G)-(n_0-1)n}{\ell}\right\rfloor
\le t(n-\delta(G))+\left\lfloor\frac{\ell-1}{\ell}\right\rfloor=t(n-\delta(G)).$$

Now we can proceed by induction on~$n$.
The base cases  when $n_0\le n\le n_0+\ell-1$ are covered by the last paragraph.
Next, we check the inductive step.
Suppose $G$ is an $n$-vertex graph where~$n\ge n_0+\ell$.
By the previous paragraph we may assume that $\delta(G)\ge((n_0-1)n+\ell)/n_0$.

It is easy to show that~$G$ contains a~$K_{n_0}$ (e.g., by Theorem~\ref{thm:HajnalSzm2}), which in turn contains a monochromatic copy~$T$ of~$K_\ell$ by Theorem~\ref{further2-1}.
Let~$G':=G\setminus V(T)$.
Note that~$G'$ is an~\hbox{$r$-edge-coloured} graph on~$n-\ell$ vertices with minimum degree~$\delta(G')\ge\delta(G)-\ell\ge(1-1/n_0)(n-\ell)$.
By the inductive hypothesis,~$G'$ contains a~\hbox{$K_\ell$-tiling} consisting of
$$\left\lfloor\frac{(SR_r(\ell)-2)\delta(G')-(SR_r(\ell)-3)(n-\ell)}{\ell}\right\rfloor\ge\left\lfloor\frac{(SR_r(\ell)-2)\delta(G)-(SR_r(\ell)-3)n}{\ell}\right\rfloor-1$$
monochromatic copies of~$K_\ell$.
Adding~$T$ to this tiling yields a~\hbox{$K_\ell$-tiling} in~$G$ with the required number of monochromatic copies of~$K_\ell$.
This concludes the inductive step and the proof.
\end{proof}

\subsection{Open problems}
In general, 
Problems~\ref{problem:alaBES} and~\ref{problem:alaMoon} remain wide open. 
The main open problem which complements our work  is Question~\ref{conjecture:alaBES}; if true,  this would fully generalise Theorem~\ref{theorem:BES} to the minimum degree setting.
It would also be interesting to improve the error term in case~\ref{case:Moonmedium} of Theorem~\ref{theorem:alaMoon};
we believe that the $o(n)$ term should not appear at all. 
Ideas used in the proof of the Hajnal--Szemer\'edi theorem (see also~\cite{KiersteadK}) may be helpful.

Although our argument for case~\ref{case:BESlarge} of Theorem~\ref{theorem:alaBES} does not immediately generalise to larger cliques and multiple colours, it seems likely that some of the ideas used should be useful.
The main challenge is to prove analogues of Lemmas~\ref{lemma:bowtie1} and~\ref{lemma:bowtie2}.

Another potentially interesting future direction  is the asymmetric version of Problems~\ref{problem:alaBES} and~\ref{problem:alaMoon}; that is, the monochromatic tiling one seeks in each colour could be different now.
 
Finally, we remark that Ramsey-type results for tilings have been used in some interesting applications.
For example, in~\cite{GyarfasS2} an analogue of Theorem~\ref{theorem:BES} was proved where one insists that the monochromatic copy of $mK_3$ must lie in a connected subgraph of its own colour.
This result was used to determine the Ramsey number of an `almost' square of a cycle $C_m$. It would be interesting to see if our results in this paper have similar applications (perhaps via the regularity method).

\section*{Acknowledgments}
The work in this article was carried out through research visits supported by the BRIDGE strategic alliance between the University of Birmingham and the University of Illinois at Urbana-Champaign.
The authors are also grateful to the referees for their careful reviews.

\smallskip
{\noindent \bf Data availability statement.}
There are no additional data beyond that contained within the main manuscript.

\section*{Appendix}

In this appendix, we prove Lemma~\ref{theorem:embedding} using the regularity method. 
First, we introduce some  notation. 
The \emph{density} of a bipartite graph with vertex classes~$A$ and~$B$ is
defined to be 
$$d(A,B):=\frac{e(A,B)}{|A|\cdot|B|},$$
where here $e(A,B)$ is the number of edges between $A$ and $B$.
Given~$\eps>0$, a graph~$G$ and two disjoint sets~$A, B\subseteq V(G)$, we say that the pair~$(A, B)_G$  is \emph{$(\eps, d)$-regular} if~$d(A,B)\ge d$ and, for all sets~$X\subseteq A$ and~$Y\subseteq B$ with~$|X| \ge \eps|A|$ and~$|Y| \ge \eps|B|$, we have~$|d(A,B) - d(X,Y)| < \eps$.
The pair~$(A,B)_G$ is~$(\eps,d)$-{\it super-regular} if 
all sets~$X\subseteq A$ and~$Y\subseteq B$ with~$|X| \ge \eps|A|$ and~$|Y| \ge \eps|B|$ satisfy~$d(X,Y)\geq d$ and, furthermore, $d_G(a) \ge d|B|$ for all~$a \in A$ and~$d_G(b) \ge d|A|$ for all~$b \in B$.




The following  is the degree form of Szemer{\'e}di's Regularity Lemma.

\begin{lemma}[Regularity Lemma~\cite{Szemeredi}]\label{theorem:regularity}
For every~$\eps>0$ and every $\ell_0\in \mathbb N$ there is an~$M = M(\eps,\ell_0)$ such that for every $d\in[0,1)$ and for every graph~$G$ on~$n\ge M$ vertices, there exists a partition~$V_0,V_1,\dots,V_\ell$ of $V(G)$ and a spanning subgraph~$G'$ of~$G$ such that the following holds:
\begin{itemize}
    \item~$\ell_0 \le \ell \le M$ and~$|V_0| \le \eps n$,
\item~$|V_i| = |V_1|~$ for every~$i\in [\ell]$,
\item~$d_{G'}(x) \ge d_G(x) - (d+\eps)n$ for all~$x\in V(G)$,
\item for all~$i \in [\ell]$ the graph~$G'[V_i]$ is empty,
\item for all~$1 \le i < j \le \ell$, $(V_i, V_j )_{G'}$  either has density $0$ or is~$(\eps,d)$-regular.
\end{itemize}
\end{lemma}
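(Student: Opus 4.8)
The plan is to derive this degree form from the classical partition form of Szemer\'edi's Regularity Lemma~\cite{Szemeredi} by the standard ``cleaning'' procedure. First I would fix the parameters independently of $d$: set $\eps':=\eps^2/16$ and $\ell_0':=\max\{\ell_0,\lceil 4/\eps\rceil\}$, apply the partition form with regularity parameter $\eps'$ and cluster lower bound $\ell_0'$, and let $M$ be the resulting bound on the number of clusters. This produces, for every $G$ on $n\ge M$ vertices, an equipartition $V_0,V_1,\dots,V_\ell$ with $\ell_0'\le\ell\le M$, $|V_0|\le\eps' n$, equal cluster sizes $|V_1|=\dots=|V_\ell|\le n/\ell_0'\le\eps n/4$, and all but at most $\eps'\binom{\ell}{2}$ of the pairs $(V_i,V_j)$ being $\eps'$-regular. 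Since $\eps'$ and $\ell_0'$ depend only on $\eps$ and $\ell_0$, so does $M$, matching the quantifiers (in particular $M$ is independent of $d$).

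I would then define the spanning subgraph $G'$ by deleting from $G$ exactly the edges of four types: (a) those incident to $V_0$; (b) those lying inside some cluster $V_i$; (c) those between a pair $(V_i,V_j)$ that is not $\eps'$-regular; and (d) those between a pair $(V_i,V_j)$ with $d(V_i,V_j)<d$. Every structural conclusion is then immediate: $G'[V_i]$ is empty by (b); any surviving pair is $\eps'$-regular of density at least $d$, hence $(\eps,d)$-regular since $\eps'<\eps$, while every other pair has density $0$ in $G'$ by (c) and (d); and the bounds $\ell_0\le\ell\le M$, $|V_0|\le\eps n$, together with the equipartition, are inherited from the partition form.

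The substance lies in the degree bound, where for a fixed $x\in V_i$ I would estimate the four contributions separately. Types (a) and (b) each delete at most $\eps n/4$ edges at $x$, since $|V_0|\le\eps' n\le\eps n/4$ and $|V_i|\le\eps n/4$. For (c) I would use the usual absorption trick: as the irregular-pair degrees sum to at most $2\eps'\binom{\ell}{2}\le\eps'\ell^2$, at most $\sqrt{\eps'}\,\ell$ clusters meet more than $\sqrt{\eps'}\,\ell$ irregular pairs, so moving these few clusters into $V_0$ enlarges it by at most $\sqrt{\eps'}\,n\le\eps n/4$ and forces every surviving vertex to lose at most $\sqrt{\eps'}\,\ell\cdot|V_i|\le\eps n/4$ edges of type (c). For (d) the density threshold gives $e(V_i,V_j)<d\,|V_i|\,|V_j|$, so summing over the low-density neighbours $V_j$ bounds the \emph{total} number of type-(d) edges leaving $V_i$ by $d\,|V_i|\,n$. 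Since (a)--(c) are controlled by $3\eps n/4$ at every vertex, it remains only to add the type-(d) loss to reach the target $(d+\eps)n$.

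I expect the type-(d) contribution to be the main obstacle. The hypothesis $d(V_i,V_j)<d$ is a property of the \emph{pair}, so the estimate above naturally bounds the total number of deleted edges leaving a cluster, and hence the loss \emph{averaged} over the vertices of $V_i$, rather than the loss at each individual vertex; in contrast to the irregular pairs of (c), a cluster may lie in low-density pairs with many others, and this excess cannot be localised to a bounded number of clusters that one removes into $V_0$. Passing from this per-cluster bound to the claimed per-vertex statement is therefore the delicate step, and is precisely where the choice of the cleaning thresholds $\eps'$ and $d$, and the careful bookkeeping of the deleted edges, must be carried out; the deletions of types (a)--(c) are, by comparison, entirely routine.
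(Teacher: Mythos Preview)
The paper does not prove this statement; it is the standard degree form of Szemer\'edi's Regularity Lemma, quoted with a citation and used as a black box. There is thus no paper proof to compare against, so I simply assess your sketch.

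The plan is the right one, but two points need attention. First, step~(a) is a genuine error for the lemma as stated here: the degree condition is required for \emph{every} $x\in V(G)$, including $x\in V_0$, and if you delete all edges incident to $V_0$ then any vertex of $V_0$ with degree exceeding $(d+\eps)n$ violates it. The conclusion does not ask $G'[V_0]$ to be empty nor says anything about pairs involving $V_0$, so one should simply keep all edges touching $V_0$; this makes the degree bound trivial for $x\in V_0$ and does not hurt it for $x\notin V_0$.

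Second, your diagnosis of~(d) is correct but your proposed cure---``choice of thresholds and careful bookkeeping''---is not enough. The per-cluster estimate $\sum_j e(V_i,V_j)<d\,|V_i|\,n$ only says the \emph{average} type-(d) loss over $V_i$ is about $dn$, so Markov gives nothing useful. The missing ingredient is that the low-density pairs are themselves $\eps'$-regular: in an $\eps'$-regular pair $(V_i,V_j)$ of density below $d$, all but at most $\eps'|V_i|$ vertices of $V_i$ have fewer than $(d+\eps')|V_j|$ neighbours in $V_j$. Summing these exceptional sets over $j$ gives at most $\eps'\ell|V_i|$ bad incidences, so at most $\sqrt{\eps'}\,|V_i|$ vertices of $V_i$ are exceptional for more than $\sqrt{\eps'}\,\ell$ partners; absorb those vertices into $V_0$ (the vertex-level analogue of your cluster-level trick for~(c)) and trim clusters to a common size. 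Every surviving vertex then has type-(d) loss at most $(d+\eps')n+\sqrt{\eps'}\,n$, which together with~(b) and~(c) gives the claimed bound. Without invoking regularity inside the sparse pairs, the obstacle you flag cannot be closed.
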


The \emph{reduced graph~$R$ of~$G$ with parameters~$\eps$,~$\ell_0$ and~$d$} is the graph with vertex set~$\{V_i:i\in[\ell]\}$ and in which~$V_i V_j$ is an edge precisely when~$(V_i,V_j)_{G'}$ is~$(\eps, d)$-regular.
The following well-known consequence of the Regularity Lemma states that the reduced graph almost inherits the minimum degree of the original graph.

\begin{proposition}\label{reducedgraph}
Let $\ell_0 \in \mathbb N$, let~$0<\eps,d,k<1$ and let~$G$ be an~$n$-vertex graph with~$\delta(G)\geq kn$. If~$R$ is the reduced graph of~$G$ obtained by applying Lemma~\ref{theorem:regularity} with parameters~$\eps$,~$\ell_0$ and~$d$, then~$\delta(R)\geq(k-2\eps-d)|R|$.\qed
\end{proposition}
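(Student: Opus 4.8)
The plan is to bound the $R$-degree of an arbitrary class from below by tracking, for a single representative vertex, where its neighbours in the cleaned-up graph $G'$ can lie. First I would fix a class $V_i$ (that is, a vertex of $R$) and pick any $x\in V_i$. By the degree form of the Regularity Lemma (the third bullet of Lemma~\ref{theorem:regularity}) together with the hypothesis $\delta(G)\ge kn$, we have $d_{G'}(x)\ge d_G(x)-(d+\eps)n\ge (k-d-\eps)n$.

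Next I would argue that all of these $G'$-neighbours are concentrated in $V_0$ together with the $R$-neighbourhood of $V_i$. The key structural point is that $G'[V_i]$ is empty (fourth bullet), and that for each $j\neq i$ with $j\ge 1$ the pair $(V_i,V_j)_{G'}$ either has density $0$ or is $(\eps,d)$-regular (fifth bullet): in the former case $x$ has no $G'$-neighbour in $V_j$, while in the latter case $V_iV_j\in E(R)$ by the definition of the reduced graph. Hence every $G'$-neighbour of $x$ lies either in the exceptional set $V_0$ or in one of the $d_R(V_i)$ classes adjacent to $V_i$ in $R$. Writing $m:=|V_1|=\dots=|V_\ell|$ for the common class size, each such adjacent class contributes at most $m$ neighbours, so $d_{G'}(x)\le |V_0|+d_R(V_i)\,m\le \eps n+d_R(V_i)\,m$.

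Combining the two bounds on $d_{G'}(x)$ yields $d_R(V_i)\,m\ge (k-d-2\eps)n$. Finally I would use the partition identity $n=|V_0|+\ell m\ge \ell m=|R|\,m$, equivalently $n/m\ge |R|$, to conclude that $d_R(V_i)\ge (k-d-2\eps)\,n/m\ge (k-2\eps-d)|R|$. Since $V_i$ was an arbitrary vertex of $R$, this gives $\delta(R)\ge(k-2\eps-d)|R|$, as required.

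There is no genuine obstacle here, as the statement is a routine consequence of the partition structure supplied by the Regularity Lemma. The only points requiring a little care are the bookkeeping that ties $G'$-edges incident to $x$ to edges of $R$ via the density dichotomy of the fifth bullet, and absorbing the (at most $\eps n$) vertices of the exceptional class $V_0$ into the error term; both are exactly what produce the $-2\eps-d$ loss in the minimum degree.
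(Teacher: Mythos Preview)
Your argument is correct and is precisely the standard one; the paper itself does not supply a proof (the proposition is stated with a trailing \qed\ as a well-known consequence of the Regularity Lemma), so there is nothing to compare against beyond noting that your write-up is exactly the routine bookkeeping one expects. The only minor remark is that in the final step the inequality $(k-2\eps-d)\,n/m\ge(k-2\eps-d)|R|$ tacitly uses $k-2\eps-d\ge 0$; when this quantity is negative the claimed bound is vacuous, so no harm is done.
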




\begin{lemma}[Blow-up Lemma~\cite{blowup}]\label{blowuplemma} 
Given a graph~$R$ of order~$\ell$ and~$d,\Delta>0$, there exists an~$\eps > 0$ such that the following holds. Given any $m \in \mathbb N$, let 
$V_1, V_2,\dots, V_\ell$ denote the vertex classes of the blow-up $R(m)$ of $R$ (so $|V_i|=m$ for all $i \in [\ell])$.
Let $G$ be a graph obtained from $R(m)$ as follows: for every $1\leq i<j \leq \ell$ such that $(V_i,V_j)$ induces a complete bipartite graph in $R(m)$, $(V_i,V_j)_G$ now forms an $(\eps,d )$-super-regular pair.
 If a graph~$H$ with~$\Delta(H) \le \Delta$ lies in~$R(m)$, then there is a copy of $H$ in~$G$.
\end{lemma}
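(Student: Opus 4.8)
The plan is to prove this via the randomised greedy embedding strategy of Koml\'os, S\'ark\"ozy and Szemer\'edi. Fix the given embedding of $H$ into $R(m)$; it assigns to each vertex $x\in V(H)$ a \emph{target class} $V_{\phi(x)}$, and every edge of $H$ joins two vertices whose target classes span a $(\eps,d)$-super-regular pair in $G$. The goal is to construct an injective map $\sigma$ sending each $x$ into $V_{\phi(x)}$ so that each edge $xy\in E(H)$ becomes an edge $\sigma(x)\sigma(y)\in E(G)$. For every unembedded $x$ I would maintain a \emph{candidate set} $C(x)\subseteq V_{\phi(x)}$ of still-admissible images, initialised to all of $V_{\phi(x)}$; upon fixing $\sigma(x)=v$ one deletes $v$ from all candidate sets and, for each unembedded neighbour $y$ of $x$, replaces $C(y)$ by $C(y)\cap N_G(v)$. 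Throughout, the parameters will satisfy a hierarchy $\eps\ll\eps'\ll d,1/\Delta,1/\ell$, and the interesting regime is $m$ large (for bounded $m$ the statement is immediate).

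First I would run a greedy phase that embeds all but a tiny reserved fraction of the vertices. In each class I reserve a set of about $\eps' m$ vertices of $H$ together with about as many image vertices, and embed the remaining vertices one at a time, in a suitable order, each placed at a uniformly random free candidate. Two facts drive this phase: (i) each $x$ has at most $\Delta$ neighbours, so $C(x)$ is intersected with a neighbourhood at most $\Delta$ times, and inside a super-regular pair each such cut retains at least a $(d-\eps)$-fraction provided $|C(x)|\ge\eps m$; hence candidate sets never fall below roughly $d^{\Delta}m$, which exceeds $\eps m$ once $\eps$ is small relative to $d$ and $\Delta$; and (ii) a martingale/second-moment concentration argument shows the random placements spread out evenly, so that every vertex about to be embedded still has an unused candidate and the phase runs to completion without getting stuck.

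The endgame is then handled by a single application of the defect form of Hall's theorem (equivalently K\"onig's theorem). For each class $V_i$, form the bipartite graph between the reserved $H$-vertices of class $V_i$ and the still-free reserved images in $V_i$, joining $y$ to $v$ exactly when $v\in C(y)$; a perfect matching in each class completes $\sigma$. It therefore suffices to verify Hall's condition class by class. Each reserved $y$ still has $\gtrsim d^{\Delta}m\gg\eps' m$ candidates, which rules out large violating sets, while super-regularity guarantees that no set of $\eps' m$ image vertices can simultaneously exhaust the candidate sets of more than a comparable number of reserved $y$'s, ruling out small violating sets.

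The main obstacle — and the heart of the argument — is ensuring that the reserved vertices' candidate sets remain \emph{spread out} enough for Hall's condition to survive the greedy phase: naive uniform placement could, over many steps, silently remove almost all images shared by a cluster of reserved vertices. This is precisely the ``blocking'' phenomenon that Koml\'os, S\'ark\"ozy and Szemer\'edi control by biasing each random placement away from images that are critical for the reserved set, via a potential (weight) function penalising choices that shrink any reserved candidate set too quickly, and by showing the ``safe'' placements at each step still form a positive fraction of $C(x)$. Tracking this potential through the whole process, and only then invoking the matching, is the step I expect to require the most care; the parameter hierarchy above is chosen exactly so that all the preceding inequalities hold simultaneously.
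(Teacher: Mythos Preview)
The paper does not prove this lemma at all: it is stated as a citation of the original Blow-up Lemma of Koml\'os, S\'ark\"ozy and Szemer\'edi~\cite{blowup} and used as a black box in the appendix. Your sketch is a faithful high-level outline of precisely the randomised greedy embedding plus matching endgame that those authors use in~\cite{blowup}, so in that sense your approach is the ``right'' one for the cited result rather than a new route. For the purposes of the present paper, however, no proof is expected here; the lemma is simply quoted.
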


Lemma~\ref{theorem:embedding} now follows easily from Lemmas~\ref{theorem:regularity} and~\ref{blowuplemma}.
In the proof below,  constants in the displayed hierarchy are chosen from right to left.


\subsection*{Proof of Lemma~\ref{theorem:embedding}}
Given $\eta>0$, choose constants
$$0<1/n_0 \ll 1/\ell_0 \ll \eps \ll d\ll\eta, 1/8.$$
Given $n \geq n_0$, let $G$ be an $n$-vertex graph as in the statement of the lemma.
Apply Lemma~\ref{theorem:regularity} to the graph~$G$ with parameters~$\eps$,~$\ell_0$ and~$d$ to obtain $\ell\in \mathbb N$, a partition~$V_0,V_1,\dots,V_\ell$ of~$G$, a spanning subgraph $G'$ of $G$,  and a reduced graph~$R$ of $G$. Set $m:=|V_1|$.

By Proposition~\ref{reducedgraph} we  have~$\delta(R)\ge(\delta(G)/n-2\eps-d)|R|\ge(\delta(G)/n-\eta/4)|R|$, and so~$\delta(R)/|R|\ge\delta(G)/n-\eta/4$.
By greedily deleting edges, we may further assume that~$\delta(R)/|R|\le\delta(G)/n$ and so property~\ref{property:i} holds.
Property~\ref{property:ii} also holds as~$\eps\le\eta/2$.
It remains to verify property~\ref{property:iii}.

Let~$V_{i_1},\dots,V_{i_r}$ form a clique in~$R$ with~$r\le 8$. 
By deleting `small degree' vertices, for each $j \in [r]$, one obtains a
 set~$V_{i_j}'\subseteq V_{i_j}$ such that $|V_{i_j}'|=\lceil (1-r\eps)m \rceil$
 and so that 
 $(V_{i_j}',V_{i_k}')_{G'}$ is~$(2\eps,d-r\eps)$-super-regular for each distinct $j,k \in [r]$.  
Now by Lemma~\ref{blowuplemma},
$G'[V'_{i_1},\dots,V'_{i_r}]$ (and thus
$G[V_{i_1},\dots,V_{i_r}]$) contains a $K_r(2)$-tiling consisting  of at least~$(1-\eta/2)m/2$ copies of~$K_r(2)$; so indeed~\ref{property:iii} holds. \qed


\begin{thebibliography}{99}

\bibitem{Allen} P. Allen, Covering two-edge-coloured complete graphs with two disjoint monochromatic cycles. {\it Combin. Probab. Comput.} {\bf 17}(4) (2008), 471–486.

\bibitem{Ayel} J. Ayel., Sur l'existence de deux cycles suppl\'ementaires unicolores, disjoints et de couleurs diff\'erentes dans un graphe complet bicolore. PhD thesis, Universit\'e Joseph-Fourier-Grenoble I, 1979.

\bibitem{BaloghBGGS} J. Balogh, J. Bar\'at, D. Gerbner, A. Gy\'arf\'as, and G.N. S\'ark\"ozy, Partitioning $2$-edge-colored graphs by monochromatic paths and cycles. {\it Combinatorica} {\bf 34} (2014), 507--526.

\bibitem{conf} J. Balogh, A. Freschi, and A. Treglown, 
Ramsey problems for tilings in graphs,
\emph{Proceedings of the 13th European Conference on Combinatorics, Graph Theory and Applications}, to appear. 

\bibitem{BessyT} S. Bessy and S. Thomass\'e, Partitioning a graph into a cycle and an anticycle, a proof of Lehel’s conjecture. {\it J.~Combin. Theory Ser. B} {\bf 100}(2) (2010), 176--180.

\bibitem{BucicS} M. Buci\'c and B. Sudakov, Tight Ramsey bounds for multiple copies of a graph. {\it Adv.  Combin.} (2023), 22pp.

\bibitem{Burr} S.A. Burr, On the Ramsey numbers $r(G,nH)$ and $r(nG,nH)$ when $n$ is large. {\it Discr. Math.} {\bf 65} (1987), 215--229.

\bibitem{ars} S.A. Burr, P. Erd\H os and L. Lov\'asz, On graphs of Ramsey type, {\it Ars Combinatoria} {\bf 1} (1976), 167--190.


\bibitem{BurrES} S.A. Burr, P. Erd\H os, and J.H. Spencer, Ramsey theorems for multiple copies of graphs. {\it Trans. Amer. Math. Soc.} {\bf 209} (1975), 87--99.

\bibitem{CockayneL} E.J. Cockayne and P.J. Lorimer, The Ramsey number for stripes. {\it J. Aust. Math. Soc.} {\bf 19} (1975), 252--256.

\bibitem{CorradiH}K. Corr\'adi and A. Hajnal, On the maximal number of independent circuits in a graph. {\it Acta Math. Hungar.} {\bf 14}(3-4) (1963), 423--439.

\bibitem{DeBiasioN} L. DeBiasio and L.L. Nelsen, Monochromatic cycle partitions of graphs with large minimum degree. {\it J. Combin. Theory Ser. B} {\bf 122} (2017), 634--667.

\bibitem{erdosproblem}
 P. Erd\H{o}s, Some unsolved problems in graph theory and combinatorial analysis, in: \emph{Combinatorial Mathematics and
Its Applications (Proc. Conf. Oxford 1969)}, Academic Press, London, 1971, pp. 97--109.

\bibitem{ErdosG} P. Erd\H{o}s and A. Gy\'arf\'as, Vertex covering with monochromatic paths. {\it Math. Pannon.} {\bf 6}(1) (1995), 7--10.

\bibitem{GerencserG} L. Gerencs\'er and A. Gy\'arf\'as, On Ramsey-type problems. {\it Ann. Univ. Sci. Budapest. E\"otv\"os Sect. Math} {\bf 10} (1967), 167--170.

\bibitem{GyarfasS} A. Gy\'arf\'as and G.N. S\'ark\"ozy, Star versus two stripes Ramsey numbers and a conjecture of Schelp. {\it Combin. Probab. Comput.} {\bf 21}(1-2) (2012), 179--186.

\bibitem{GyarfasS2} A. Gy\'arf\'as and G.N. S\'ark\"ozy, Ramsey number of a connected triangle matching. {\it J. Graph Theory} {\bf 83}(2) (2016), 109--119.

\bibitem{HajnalS} A. Hajnal and E. Szemer\'edi, Proof of a conjecture of P. Erd\H{o}s. {\it Combin. Theory Appl.} {\bf 2} (1970).

\bibitem{KiersteadK} H.A. Kierstead and A.V. Kostochka, A short proof of the Hajnal--Szemer\'edi theorem on equitable colouring. {\it Combin. Probab.  Comput.} {\bf 17}(2) (2008), 265--270.

\bibitem{Komlos} J. Koml\'os. Tiling Tur\'an theorems. {\it Combinatorica} {\bf 20}(2) (2000), 203--218.


\bibitem{blowup} J.~Koml\'os, G.N.~S\'ark\"ozy and E.~Szemer\'edi,
Blow-up lemma,
\emph{Combinatorica} {\bf 17} (1997), 109--123.

\bibitem{KuhnO} D. K\"uhn and D. Osthus, The minimum degree threshold for perfect graph packings. {\it Combinatorica} {\bf 29} (2009), 65--107.

\bibitem{Letzter} S. Letzter, Monochromatic cycle partitions of 2-coloured graphs with minimum degree $3n/4$. {\it Electron. J. Combin.} {\bf 26} (2019), P1.19.

\bibitem{li} H. Li,
 V. Nikiforov
and R.H. Schelp,
A new class of Ramsey--Tur\'an problems,
{\it Discr. Math.} {\bf 310}(24) (2010), 3579--3583.
 

\bibitem{LuczakRS} T. {\L}uczak, V. R\"odl and E. Szemer\'edi, Partitioning two-coloured complete graphs into two monochromatic cycles. {\it Combin. Probab. Comput.} {\bf 7}(4) (1998), 423--436.

\bibitem{Moon} J.W. Moon, Disjoint Triangles in Chromatic Graphs. {\it Mathematics Magazine} {\bf 39}(5) (1966), 259--261. 

\bibitem{OmidiRR} G.R. Omidi, G. Raeisi and Z. Rahimi, Stars versus stripes Ramsey numbers. {\it Eur. J. Comb.} {\bf 67} (2018), 268--274.

\bibitem{Pokrovskiy} A. Pokrovskiy, L. Versteegen and E. Williams, A proof of a conjecture of Erd\H os and Gy\'arf\'as on monochromatic path covers, arXiv:2409.03623.

\bibitem{Ramsey} F.P. Ramsey, On a problem of formal logic. {\it Proc. Lond. Math. Soc.} {\bf 30}(1) (1930), 264--286.

\bibitem{Schelp} R.H. Schelp, Some Ramsey--Tur\'an type problems and related questions. {\it Discr. Math.} {\bf 312}(14) (2012), 2158--2161.

\bibitem{Szemeredi} E. Szemer\'{e}di, Regular partitions of graphs, \emph{Probl\'{e}mes Combinatoires et Th\'{e}orie des Graphes Colloques Internationaux CNRS}~{\bf 260} (1978), 399--401.
    
\end{thebibliography}
\end{document}